\theoremstyle{plain}
\newtheorem{thm}{Theorem}[section]
\newtheorem{prop}[thm]{Proposition}
\newtheorem{cor}[thm]{Corollary}
\newtheorem{lemma}[thm]{Lemma}
\theoremstyle{definition}
\newtheorem{defn}[thm]{Definition}
\newtheorem{ex}[thm]{Example}
\numberwithin{equation}{section}
\newcommand{\pd}{p^\downarrow}
\newcommand{\pu}{p^\uparrow}
\newcommand{\bv}{\bigvee}
\newcommand{\bw}{\bigwedge}
\newcommand{\cU}{\mathcal U}
\newcommand{\cV}{\mathcal V}
\newcommand{\qd}{q^\downarrow}
\newcommand{\cI}{\mathcal{I}}
\newcommand{\xd}{x^\downarrow}
\newcommand{\Frm}{\mathbf{Frm}}
\newcommand{\F}{\Gamma_{(-)}}
\newcommand{\G}{\cU_{(-)}}
\newcommand{\Pos}{\mathbf{Pos}}
\newcommand{\PosU}{\mathbf{Pos}_{\cU}}
\newcommand{\PosIU}{\mathbf{Pos}^{i}_{\cU}}
\newcommand{\LatR}{\mathbf{CLat}_R}
\newcommand{\JF}{\mathbf{JSpecF}_P}
\newcommand{\JC}{\mathbf{JComp}_P}
\newcommand{\Clos}{\mathbf{Clos}_P}
\newcommand{\JCop}{\mathbf{JComp}_{\overleftarrow{P}}}
\newcommand{\cA}{\mathcal{A}}
\begin{document}
\title{Categories of frame-completions and join-specifications}
\author{Rob Egrot}
\date{}

\maketitle
\begin{abstract}
Given a poset $P$, a join-specification $\cU$ for $P$ is a set of subsets of $P$ whose joins are all defined.
 The set $\cI_\cU$ of downsets closed under joins of sets in $\cU$ forms a complete lattice, and is, in a sense, the free $\cU$-join preserving join-completion of $P$. The main aim of this paper
 is to address two questions. First, given a join-specification $\cU$, when is $\cI_\cU$ a frame? And second,  given a poset $P$, what is the
 structure of its set of frame-generating join-specifications?

To answer the first question we provide a number of equivalent conditions, and we use these to investigate the second.
 In particular, we show that the set of frame-generating join-specifications for $P$ forms a complete lattice ordered by
 inclusion, and we describe its meet and join operations. We do the same for the set of `maximal' such
 join-specifications, for a natural definition of `maximal'.
We also define functors from these lattices, considered as categories, into a suitably defined category of frame-completions of $P$, and construct right adjoints for them.

\end{abstract}

\section{Introduction}
When passing from latices to semilattices, the concept of distributivity can be generalized in a number of ways, and previously equivalent properties diverge in the more general setting. One method is to consider a (meet) semilattice to be distributive if and only if it has the property that whenever \[y\wedge (x_1\vee x_2)\] is defined, so too is \[(y\wedge x_1)\vee (y\wedge x_2)\] and
the two are equal. Another, which we will not discuss, is due to Gr\"atzer and Schmidt \cite{GraSchm62}. This definition, proposed by Schein \cite{Sch72} and based on the work of Balbes \cite{Bal69}, is,
assuming the axiom of choice, equivalent to being embeddable into a distributive lattice via a semilattice morphism that preserves binary joins whenever they exist (see either \cite{Sch72} or \cite[theorem 2.2]{Bal69}).

This admits an obvious generalization, considered in both \cite{Sch72} and \cite{Bal69}, to arbitrary countable numbers. So, for example, for $0\leq n\leq \omega$ we can define a semilattice to be $n$-distributive
if and only if whenever $k<n$ and $y\wedge (x_1\vee\ldots \vee x_k)$ is defined, so too is $(y\wedge x_1)\vee\ldots \vee (y\wedge x_k)$ and the two are equal.
Obviously for $m<n$ we have $n$-distributivity implies $m$-distributivity, and that the converse does not hold in general is stated in \cite{Sch72}, and proved explicitly in \cite{Kea97}.
This contrasts with the lattice case, where, of course, being $3$-distributive implies being $\omega$-distributive.

Central to the relationship between these definitions and the property of being embeddable into distributive lattices via appropriately join-preserving semilattice morphisms are various separation
properties generalizing the prime ideal theorem for distributive lattices. Again, separation properties that are equivalent for lattices may not be for semilattices (see Varlet \cite{Var75}).

Alongside the development of this concept of distributivity for semilattices, a general theory of extensions and completions of posets was being established (for example \cite{Doc67, Schm72a,Schm72b}),
extending the pioneering work of MacNeille \cite{Mac37}. A notable result from this period being that every join-completion of a poset is `universal', in
the sense that it provides the object part of a left adjoint map, provided the morphisms are chosen appropriately \cite{Schm74}.

Of particular relevance here is the concept of an `ideal completion'. That is, a completion of a poset $P$ whose elements are downsets of $P$ with certain closure properties. Unfortunately,
the word `ideal' appears in the order theory literature with a variety of similar but different meanings (see \cite{Nie06} for a unifying treatment). Here we will be interested in downsets
closed under certain families of existing joins.

These ideal completions turn out to be relevant to distributivity as discussed above. For $n=\omega$ \cite[theorem 1.1]{CorHic78}, and $n<\omega$ \cite[theorem 2]{Hic78}, it turns out that a
meet-semilattice is $n$-distributive if and only if the ideal completion corresponding to downsets closed under existing joins of cardinality less than $n$ is distributive.
Though this is not stated explicitly in the statements of the cited theorems, for these values of $n$ this ideal completion will be distributive if and only if it is a frame. This is not immediately obvious, but it can be seen, for example, by appealing to our theorem \ref{T:gen}. Recent work on extending semilattices to frames using sites and coverages can be found in \cite{BallPul14}.

In \cite{HicMon84}, a concept of $\alpha$-distributivity is defined for posets and regular cardinals $\alpha$. Again, this turns out to be equivalent to the appropriate ideal completion being a frame \cite[theorem 2.7]{HicMon84}. Recently, this was partially generalized to arbitrary choices of joins to be preserved \cite[theorem 3.5]{Egr17b}. The relevant result being that the corresponding ideal completion will be a frame if and only if the ideal `closure' of an arbitrary subset can be constructed using a certain recursive process. In that paper this is then used to prove some results about embedding posets in powerset algebras.

The process defined in \cite{Egr17b} is actually unnecessarily complicated, and the `recursive' process always terminates in one step. Section \ref{S:ideals} of this paper is devoted to proving this, and using this observation to fully extend \cite[theorem 2.7]{HicMon84} to the more general setting.

The important idea is this. In \cite{CorHic78,Hic78,HicMon84}, the crucial fact is that a relevant ideal can be constructed from an arbitrary down-closed set $S$ purely by an iterated process of closing appropriately with respect to certain existing joins, and, in the case of posets, closing downwards. When certain conditions are met, this closing process terminates after the first application, and, in the poset case does not even require closing downwards. This turns out to be equivalent to the lattice of ideals being a frame. A similar idea occurs in the context of \emph{partial frames} as \cite[lemma 3.3]{FriSch16}.

When we allow more freedom in the choice of which joins our ideals must preserve (\cite{Egr17b} uses the term \emph{join-specification}), this is no longer true. However, the set of join-specifications for a poset $P$ is related to the set of join-completions of $P$ via a Galois connection (see proposition \ref{P:adj}). Thus every join-specification has a `closure', obtained taking its image under the composition of the adjoint maps. It turns out that, for these `closed' (AKA \emph{maximal}) join-specifications, the closing process for ideals requiring only a single step is again equivalent to the ideal lattice  being a frame.  The details of this can be found in section \ref{S:ideals}.

Being sets of sets, the join-specifications for a poset can be ordered by inclusion. We call a join-specification \emph{frame-generating} if the corresponding ideal lattice is a frame.  In sections \ref{S:cat} and \ref{S:comp} we consider the structure of the posets of frame-generating, and maximal frame-generating, join-specifications for a poset $P$. In particular it turns out that these are both complete lattices, with joins being given by unions in the former case, and meets being given by intersections in the latter. We shall see that the process of constructing the corresponding ideal completion from a frame-generating join-specification is functorial, and we will define a right adjoint.

The concept of a join-specification is strongly related to that of a \emph{subset selection}. Subset selections have been used, particularly by Ern\'e, to study generalizations of continuous lattices, whose study was initiated by Scott \cite{Sco72} in the early Seventies (i.e. around the same time as the other ideas discussed here).

In particular, subset selections are used to define $\mathcal Z$-continuous posets via a suitable $\mathcal Z$-\emph{below} relation. The theory of $\mathcal{Z}$-continuous posets and $\mathcal{Z}$-below relations has received considerable attention, both from an order theoretic perspective (see e.g. \cite{WWT78,Nov82,BanErn83,Ern86,ErnZha01}), and also in relation to topology (broadly construed) \cite{Ern99}.

Ideal completions are again important in the context of $\mathcal{Z}$-continuity, though here the focus has largely been on subset selections such that a poset being $\mathcal Z$-continuous corresponds to the so called $\mathcal Z$-join ideal completion being completely distributive. This generalizes the characterization of complete distributivity of a lattice discovered by Raney in the early Fifties \cite{Ran53}. In this vein see also the constructive approach to complete distributivity developed in \cite{FawWoo90} and its sequels.

Being primarily interested in ideal-completions that are frames, rather than the stronger property of complete distributivity, the work here is largely distinct from the work on $\mathcal{Z}$-continuous posets mentioned above, and we will mostly use the notation of \cite{Egr17b}. In section \ref{S:global} however we discuss some categorical questions arising naturally from our results on frames of ideals. These have analogues in the setting of $\mathcal{Z}$-continuity, and so we return to subset selections etc. to make appropriate comparisons.

\section{Extensions of posets}\label{S:ext}
 In this section we provide important definitions and results for convenience. We use notation based on, but not completely identical to, that used in \cite{Egr17b}. We assume familiarity with the basics of category theory and order theory. Readers in need of an accessible introduction to either of these topics are directed to \cite{Lein14} and \cite{DavPri02} respectively. We begin with a brief summary of our relevant notational idiosyncrasies.

\begin{itemize}
\item For $S\subseteq P$, $S^\downarrow$ denotes $\{p\in P:p\leq s$ for some $s\in S\}$. We make a similar definition for $S^\uparrow$
\item For $p\in P$, $\pd$ denotes $\{p\}^\downarrow$ (where this is unambiguous),  and we make a similar definition for $\pu$.
\item If $f$ is a map with domain $P$ we write $f[S]$ to denote $\{f(p):p\in S\}$.
\item If $\alpha$ is a cardinal then we write $\hat{\alpha}$ for the smallest regular cardinal greater than or equal to $\alpha$.
\item Indexing sets are assumed to be non-empty.
\end{itemize}

\begin{defn}\label{D:abdist}
Given cardinals $\alpha$ and $\beta$, a lattice $L$ is \textup{\textbf{$(\alpha,\beta)$-distributive}} if whenever $I$ and $J_i$ are indexing sets with $|I|< \alpha$ and $|J_i|<\beta$ for all $i\in I$, and $x_{ij}\in L$ for every $i\in I$ and $j\in J_i$, then if $\bw_I\bv_{J_i} x_{ij}$ exists, and if $\bw_I x_{if(i)}$ exists for every $f:I\to\bigcup_I J_i$ with $f(i)\in J_i$ for all $i\in I$, then $\bv_{f} \bw_I x_{if(i)}$ exists and we have
\[\bw_I\bv_{J_i} x_{ij}=\bv_{f} \bw_I x_{if(i)}.\]
\begin{itemize}
\item If $L$ is $(\alpha,\beta)$-distributive for all $\beta$ then we say $L$ is $(\alpha,\infty)$-distributive, and we define $(\infty,\beta)$-distributive similarly.
\item If $L$ is $(\alpha,\beta)$-distributive for all $\alpha$ and $\beta$ then we say that $L$ is $(\infty,\infty)$-distributive (or usually just completely distributive).
\item There is a dual definition to be made by reversing the orders of $\bv$ and $\bw$. We will not make this explicit, but we note that when $L$ is complete, $(\infty,\infty)$-distributivity and its dual are equivalent \cite[theorem 1]{Ran52}.
\end{itemize}
\end{defn}

\begin{defn}
A \textup{\textbf{frame}} is a complete lattice $L$ with the property that, given $x\in L$ and $Y\subseteq L$, we have $x\wedge \bv Y = \bv_{y\in Y} (x\wedge y)$. I.e. A complete lattice $L$ is a frame if and only if it is $(\omega,\infty)$-distributive.

If $L_1$ and $L_2$ are frames, then a map $f:L_1\to L_2$ is a \textup{\textbf{frame morphism}} if it is a completely join-preserving lattice homomorphism.
\end{defn}

\begin{defn}
If $X$, $Y_1$ and $Y_2$ are sets, and if $e_1:X \to Y_1$ and $e_2:X\to Y_2$ are functions, then we say a function $f:Y_1\to Y_2$ \textup{\textbf{fixes}} $X$ with respect to $e_1$ and $e_2$ if the diagram in figure \ref{F:fix} commutes. Often, $e_1$ and $e_2$ are easily understood from context, and in these cases we will abuse notation by just saying, e.g. ``$f$ fixes $X$", without reference to $e_1$ and $e_2$.
\end{defn}

\begin{figure}
    \centerline{
    \xymatrix{
    X\ar[r]^{e_1}\ar[dr]_{e_2} & Y_1\ar[d]^f \\
    & Y_2
    }}
    \caption{}
    \label{F:fix}
\end{figure}

\begin{defn}
If $P$ is a poset, an \textup{\textbf{extension}} of $P$ is a poset $Q$ and an order embedding $\phi: P\to Q$. We
 sometimes abuse our notation by suppressing $\phi$ and just referring to $Q$ as a poset extension. If $Q$ is a
 complete lattice we say $\phi: P\to Q$ (or just $Q$) is a \textup{\textbf{completion}} of $P$. If for all $q\in Q$ we
 have $q=\bv (\phi[P]\cap \qd)$ then we say $\phi: P\to Q$ is a \textup{\textbf{join-extension}} of $P$ and that
 $\phi[P]$ is \textup{\textbf{join-dense}} in $Q$. If $\phi:P\to Q$ is a join-extension and $Q$ is a complete lattice then then we say $\phi:P\to Q$ is a \textup{\textbf{join-completion}}, and that $\phi[P]$ \textup{\textbf{join-generates}} $Q$.

We can define notions of meet-extensions etc. dually.
\end{defn}

There is a well known correspondence between join-completions of $P$ and standard closure operators on $P$.

\begin{defn}
Let $P$ be a poset. Then a \textup{\textbf{closure operator}} on $P$ is a map $\Gamma:\wp(P)\to \wp(P)$ that is extensive, monotone and idempotent. A closure operator $\Gamma$ is \textup{\textbf{standard}} if $\Gamma(\{p\}) = \pd$ for all $p\in P$. A set $S\subseteq P$ is \textup{\textbf{$\Gamma$-closed}} if $\Gamma(S) = S$.
\end{defn}

Given $P$, the standard closure operators on $P$ form a complete lattice when ordered by pointwise inclusion (i.e.
 $\Gamma_1\leq \Gamma_2 \iff \Gamma_1(S)\subseteq \Gamma_2(S)$ for all $S\subseteq P$).
In this lattice, meets are defined using $(\bw_I \Gamma_i)(S)=\bigcap_I \Gamma_i(S)$ for all $S\subseteq P$.

Standard closure operators and join-completions are essentially the same thing, as proposition \ref{P:closure} below makes precise.
First though we present a basic result about closure operators in a form that will be useful to us later.

\begin{prop}\label{P:arrow}
Let $\Gamma$ be a standard closure operator on a poset $P$, and let $L_\Gamma$ be the complete lattice of $\Gamma$-closed subsets of $P$. Suppose $\Gamma'$ is another standard closure operator, and that $\Gamma' \leq \Gamma$. Let $\eta:P\to L_\Gamma$ and $\eta':P\to L_{\Gamma'}$ be the canonical embeddings $p\mapsto \pd$. Then there is a unique completely join-preserving map $\phi: L_{\Gamma'}\to L_\Gamma$ fixing $P$.
Moreover, $L_\Gamma$ is a frame if and only if $\phi$ preserves finite meets for all choices of $\Gamma'\leq \Gamma$.
\end{prop}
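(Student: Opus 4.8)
The plan is to establish existence and uniqueness of $\phi$ first, and then prove the stated equivalence in two directions, with the backward direction needing only one carefully chosen $\Gamma'$. Since $\Gamma'$ is standard, $\pd=\Gamma'(\{p\})\subseteq\Gamma'(S)$ for every $p\in S$, so every $\Gamma'$-closed set $S$ is a downset and hence $S=\Gamma'(S^\downarrow)=\bv^{L_{\Gamma'}}_{p\in S}\eta'(p)$; thus $\eta'[P]$ is join-dense in $L_{\Gamma'}$. Consequently any completely join-preserving map $\phi$ fixing $P$ is forced to satisfy $\phi(S)=\bv^{L_\Gamma}_{p\in S}\pd=\Gamma(S^\downarrow)=\Gamma(S)$, which gives uniqueness. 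For existence I would simply \emph{define} $\phi(S):=\Gamma(S)$ and check complete join-preservation by verifying that $\phi(\bv^{L_{\Gamma'}}_i S_i)=\Gamma(\Gamma'(\bigcup_i S_i))$ and $\bv^{L_\Gamma}_i\phi(S_i)=\Gamma(\bigcup_i\Gamma(S_i))$ both collapse to $\Gamma(\bigcup_i S_i)$, using $\Gamma'\leq\Gamma$ together with extensivity, monotonicity and idempotence of $\Gamma$; that $\phi$ fixes $P$ is immediate since each $\pd$ is $\Gamma$-closed.

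For the forward implication I would assume $L_\Gamma$ is a frame, fix $\Gamma'\leq\Gamma$, and take $\Gamma'$-closed sets $S_1,S_2$. Meets in both $L_{\Gamma'}$ and $L_\Gamma$ are intersections, so the task reduces to showing $\Gamma(S_1\cap S_2)=\Gamma(S_1)\cap\Gamma(S_2)$, the inclusion $\subseteq$ being monotonicity and the nullary case being $\Gamma(P)=P$. For the reverse inclusion I would write $\Gamma(S_i)=\bv^{L_\Gamma}_{p\in S_i}\pd$, apply the infinitary frame distributive law to get $\Gamma(S_1)\cap\Gamma(S_2)=\bv^{L_\Gamma}_{p\in S_1,\,q\in S_2}(\pd\cap\qd)$, and then use that each $\pd\cap\qd\subseteq S_1\cap S_2\subseteq\Gamma(S_1\cap S_2)$, so this join, being $\Gamma$ of the corresponding union, lies inside $\Gamma(S_1\cap S_2)$.

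For the backward implication I would apply the hypothesis to the single standard closure operator $\Gamma'\colon S\mapsto S^\downarrow$, noting that $\Gamma'\leq\Gamma$ because $S^\downarrow\subseteq\Gamma(S)$ for every $S$, and that the $\Gamma'$-closed sets are exactly the downsets of $P$, which form a frame $\cD(P)$ under inclusion. The resulting map $\phi\colon\cD(P)\to L_\Gamma$ is $S\mapsto\Gamma(S)$, and reading it as a map $j:=\Gamma|_{\cD(P)}\colon\cD(P)\to\cD(P)$ it is a closure operator on $\cD(P)$ whose fixed points are precisely the elements of $L_\Gamma$ (every $\Gamma$-closed set being a downset, as above), with the same meets and with joins given by $j$ applied to unions. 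The hypothesis that $\phi$ preserves finite meets then says exactly that $j(S\cap T)=j(S)\cap j(T)$, i.e.\ that $j$ is a nucleus on the frame $\cD(P)$, and therefore $L_\Gamma$, as the frame of fixed points of a nucleus on a frame, is a frame.

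I expect this last step to be the main obstacle: one must keep careful track of which joins are computed in $\cD(P)$ and which in $L_\Gamma$, and push the frame distributivity of $\cD(P)$ through $j$, invoking idempotence of $j$ at the right moments, to conclude that the fixed points form a frame. This is standard frame-and-nucleus material, but in keeping with the self-contained style of the paper I would spell it out rather than merely cite it.
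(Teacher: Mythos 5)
Your proposal is correct and follows essentially the same route as the paper: $\phi$ is defined by $S\mapsto\Gamma(S)$ with uniqueness forced by join-density, the forward direction applies the frame law to joins of principal downsets exactly as in the paper's computation, and the backward direction specializes to the downset operator $S\mapsto S^\downarrow$ just as the paper specializes to the lattice $\cA(P)$ of all downsets. The only cosmetic difference is the finishing fact: the paper concludes by observing that $L_\Gamma$ is a completely join-preserving, finite-meet-preserving image of the completely distributive lattice $\cA(P)$, whereas you repackage the same map as a nucleus $j$ on the downset frame and invoke (or re-prove) that its fixed points form a frame --- these are interchangeable here, since the image of $\phi$ is precisely the set of fixed points of $j$.
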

\begin{proof}
Define $\phi$ by $\phi(C) = \Gamma(C)$. Then commutativity of the diagram is automatic. Let $I$ be an indexing set, and let $C_i\in L_{\Gamma'}$ for all $i\in I$. Then by basic properties of closure operators we have
\[\Gamma(\bv_I C_i) = \Gamma(\Gamma'(\bigcup_I C_i)) = \Gamma(\bigcup_I C_i) = \Gamma(\bigcup_I \Gamma(C_i)) = \bv_I \Gamma(C_i),\]
so $\phi$ is completely join-preserving. Moreover, this is clearly the only possible choice of map that is both completely join-preserving and fixes $P$.

Finally, suppose $I= \{1,\ldots,n\}$ is finite, that $L_\Gamma$ is a frame, and that $x\in \bigcap_I \Gamma(C_i)$. Then $\xd \subseteq \bigcap_I \Gamma(C_i)$, or, equivalently, $\xd = \xd \cap (\bv C_1 \cap \ldots \cap \bv C_n)$, where $\bv C_i$ stands for the join in $L_\Gamma$ of $\{c^\downarrow : c\in C\}$.
Now, using basic properties of closure operators and the fact that $L_\Gamma$ is a frame, and noting that $C_i$ is down-closed for all $i\in I$, we have
\begin{align*}
\xd \cap (\bv C_1 \cap \ldots \cap \bv C_n) &= \bv_{c_1\in C_1} \ldots \bv_{c_n\in C_n}(\xd \cap c_1^\downarrow\cap \ldots \cap c_n^\downarrow) \\
&= \Gamma(\bigcup_{c_1\in C_1}(\ldots \Gamma(\bigcup_{c_n\in C_n}(\xd\cap c_1^\downarrow\cap \ldots \cap c_n^\downarrow))\ldots))\\
&= \Gamma(\bigcup_{c_1\in C_1}\ldots \bigcup_{c_n\in C_n}(\xd\cap c_1^\downarrow\cap \ldots \cap c_n^\downarrow))\\
&= \Gamma(\xd\cap \bigcap_I C_i).
\end{align*}
So $x\in\Gamma(\bigcap_I C_i)$, and thus $\bigcap_I \Gamma(C_i) \subseteq \Gamma(\bigcap_I C_i)$, and so we have equality as the other inclusion always holds.

Conversely, if $\phi$ is a frame morphism for all choices of $\Gamma'\leq \Gamma$, then $L_\Gamma$ is a completely join-preserving homomorphic image of the complete lattice of all downsets of $P$, denoted by $\cA(P)$, which is completely distributive. It follows then that $L_\Gamma$ is a frame.
\end{proof}

\begin{defn}\label{D:JC}
Let $\JC$ be the category whose objects are the join-completions of $P$, and whose arrows are the (commuting triangles induced by) completely join-preserving maps fixing $P$.
\end{defn}

\begin{prop}\label{P:closure}
 Let $\Clos$ be the complete lattice of standard closure operators on $P$, ordered by pointwise inclusion, considered as a category.

Let $\delta$ be the map that takes a standard closure operator $\Gamma$ to the embedding $\eta:P\to L_\Gamma$ defined by $\eta(p)=\pd$, where $L_\Gamma$ is the lattice of $\Gamma$-closed subsets of $P$.
Let $\gamma$ be the map that takes a join-completion $e:P\to L$ to the standard closure operator induced by taking its closed sets to be all those of form $e^{-1}(\xd)$ for $x\in L$.

Then $\delta$ and $\gamma$ extend to functors, and induce an equivalence between $\JC$ and $\Clos$.
\end{prop}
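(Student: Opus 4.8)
The plan is to establish the object-level correspondence first, then functoriality, and finally the natural isomorphisms, using throughout the elementary observation that a completely join-preserving map out of a join-completion that fixes $P$ is \emph{unique}: if $e:P\to L$ is a join-completion and $g$ is such a map, then join-density gives $g(x)=\bv\{g(e(p)):e(p)\le x\}$ for every $x$. In particular $\JC$ is thin (at most one arrow between any two objects), and $\Clos$ is thin simply because it is a poset category; this lets me avoid checking compatibility of the functors with composition and identities, and makes naturality automatic.

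I would first verify well-definedness on objects. For $\delta$ this is the classical fact that the $\Gamma$-closed sets form a complete lattice $L_\Gamma$ with meet given by intersection and $\bv_I C_i=\Gamma(\bigcup_I C_i)$, that every $\Gamma$-closed set $C$ is a downset (since $p\in C$ forces $\pd=\Gamma(\{p\})\subseteq\Gamma(C)=C$), and hence that $\eta:p\mapsto\pd$ is an order-embedding with join-dense image. For $\gamma$, given a join-completion $e:P\to L$, I would check that $\{e^{-1}(\xd):x\in L\}$ is closed under arbitrary intersections (using $e^{-1}(\xd)\cap e^{-1}(y^\downarrow)=e^{-1}((x\wedge y)^\downarrow)$, completeness of $L$, and $P=e^{-1}(\top^\downarrow)$) and contains every $\pd=e^{-1}((e(p))^\downarrow)$, so it is the closed-set system of a closure operator $\Gamma_e$, with $\Gamma_e(S)=e^{-1}((\bv e[S])^\downarrow)$, and that $\Gamma_e(\{p\})=\pd$ makes it standard.

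Next I would treat functoriality. Since both categories are thin, it suffices to check that $\delta$ and $\gamma$ respect the arrow relations. For $\delta$: an arrow $\Gamma_1\to\Gamma_2$ of $\Clos$ is the relation $\Gamma_1\le\Gamma_2$, and the existence half of Proposition \ref{P:arrow} supplies a completely join-preserving map $L_{\Gamma_1}\to L_{\Gamma_2}$ fixing $P$, i.e.\ an arrow of $\JC$. For $\gamma$: if $f:L_1\to L_2$ is an arrow of $\JC$ between join-completions $e_1,e_2$, then $f\circ e_1=e_2$ together with monotonicity and join-preservation of $f$ gives, for every $S\subseteq P$, that $e_1(p)\le\bv e_1[S]$ implies $e_2(p)=f(e_1(p))\le f(\bv e_1[S])=\bv e_2[S]$, i.e.\ $\Gamma_{e_1}(S)\subseteq\Gamma_{e_2}(S)$; hence $\gamma(e_1)\le\gamma(e_2)$, an arrow of $\Clos$.

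Finally, the equivalence. For $\gamma\circ\delta$: the closed sets of $\gamma(\delta(\Gamma))$ are the sets $\eta^{-1}(C^{\downarrow})=\{p:\pd\subseteq C\}=C$ for $C\in L_\Gamma$ (the last equality because $C$ is a downset), so $\gamma\circ\delta$ is the identity on objects, hence — by thinness — the identity functor. For $\delta\circ\gamma$: given $e:P\to L$, define $\theta_e:L\to L_{\Gamma_e}$ by $\theta_e(x)=e^{-1}(\xd)$; it is surjective by the definition of $L_{\Gamma_e}$, and join-density of $e$ gives $x=\bv e[e^{-1}(\xd)]$, from which injectivity and order-reflection follow, so $\theta_e$ is an order isomorphism and thus completely join-preserving; and $\theta_e(e(p))=e^{-1}((e(p))^\downarrow)=\pd=\eta(p)$ shows it fixes $P$, so $\theta_e$ is an isomorphism of $\JC$. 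Naturality of $\theta=(\theta_e)_e$ is immediate from thinness of $\JC$. Hence $\gamma\circ\delta=\mathrm{id}_{\Clos}$ and $\delta\circ\gamma\cong\mathrm{id}_{\JC}$, yielding the equivalence. I expect no conceptual obstacle — the substance is already packaged in Proposition \ref{P:arrow} — and the only genuine labour is the routine bookkeeping in the well-definedness of $\gamma$ and in verifying that $\theta_e$ is an order isomorphism fixing $P$.
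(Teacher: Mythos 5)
Your proof is correct and is exactly the ``direct proof using Proposition \ref{P:arrow}'' that the paper leaves to the reader: the object-level correspondence, functoriality via thinness and Proposition \ref{P:arrow}, the identity $\gamma\circ\delta=\mathrm{id}$, and the natural isomorphism $\theta_e(x)=e^{-1}(\xd)$ are the intended verifications, all carried out accurately.
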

\begin{proof}
A direct proof using proposition \ref{P:arrow} is not difficult.
\end{proof}

The result of proposition \ref{P:closure} is often stated in dual form. For example, see the introduction to \cite{Schm72a}. We return to this duality in section \ref{S:alt}.

The following result is well known and straightforward to prove.
\begin{prop}\label{P:ext}
If $\phi:P\to J$ is a join-extension then whenever $S\subseteq P$ and $\bw S$ is defined in $P$ we have $\bw\phi[S]=\phi(\bw S)$ in $J$.
\end{prop}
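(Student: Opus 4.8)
The plan is to prove the statement by showing directly that $\phi(\bw S)$ is the greatest lower bound of $\phi[S]$ in $J$; this simultaneously establishes that $\bw\phi[S]$ exists in $J$ and that it equals $\phi(\bw S)$, so no separate existence argument will be needed. Write $m=\bw S$. First I would check the easy half: since $m\leq s$ for every $s\in S$ and $\phi$ is order-preserving, $\phi(m)\leq\phi(s)$ for all $s\in S$, so $\phi(m)$ is a lower bound for $\phi[S]$ in $J$.

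The substantive step is to show that $\phi(m)$ dominates every lower bound. So let $j\in J$ be an arbitrary lower bound for $\phi[S]$, and set $T=\{p\in P:\phi(p)\leq j\}$, so that $\phi[P]\cap j^\downarrow=\phi[T]$. Since $\phi$ is a join-extension, $j=\bv\phi[T]$. Now fix $p\in T$: then $\phi(p)\leq j\leq\phi(s)$ for every $s\in S$, and because $\phi$ is an order embedding it reflects the order, so $p\leq s$ for all $s\in S$. Hence $p$ is a lower bound for $S$ in $P$, so $p\leq m$ and therefore $\phi(p)\leq\phi(m)$. This shows $\phi(m)$ is an upper bound for $\phi[T]$, and since $j=\bv\phi[T]$ is the least upper bound of $\phi[T]$, we conclude $j\leq\phi(m)$. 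Combining the two halves gives $\phi(m)=\bw\phi[S]$ in $J$.

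I do not expect a genuine obstacle here. The only two ingredients are that an order embedding reflects as well as preserves the order, and that join-density lets one replace an arbitrary element of $J$ by the join of the elements of $\phi[P]$ beneath it. In particular, completeness of $J$ is never used: the single join invoked in the argument, namely $\bv\phi[T]=j$, is guaranteed to exist by the very definition of a join-extension. The one point I would be careful to state explicitly in the writeup is that producing a lower bound $\phi(m)$ which is above every other lower bound is exactly what it means for $\bw\phi[S]$ to exist and equal $\phi(m)$.
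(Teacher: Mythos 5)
Your proof is correct, and it is exactly the standard argument the paper has in mind when it calls this result ``well known and straightforward'' and omits the proof: use join-density to write an arbitrary lower bound $j$ of $\phi[S]$ as $\bv(\phi[P]\cap j^\downarrow)$, then use order-reflection of the embedding to see that every element of $\phi[P]$ below $j$ lies below $\phi(\bw S)$. Your closing remarks are also accurate: completeness of $J$ is not needed, and exhibiting $\phi(\bw S)$ as a lower bound dominating all others is precisely what the claimed equality asserts.
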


\begin{defn}\label{D:jspec}
Let $P$ be a poset. Let $\cU$ be a subset of $\wp(P)$. Then $\cU$ is a \textup{\textbf{join-specification}} (of $P$) if it satisfies the following conditions:
\begin{enumerate}[(1)]
\item $\bv S$ exists in $P$ for all $S\in \cU$, and
\item $\{p\}\in \cU$ for all $p\in P$.
\end{enumerate}
\end{defn}
This is a slight departure from \cite{Egr17b}, where it is required that join-specifications do not include $\emptyset$.

\begin{defn}\label{D:ideal} Given a join-specification $\cU$ we define a $\cU$-\textup{\textbf{ideal}}  of $P$ to be a subset of $P$ that is closed downwards and under joins from $\cU$.
\end{defn}
There is a slight subtlety about whether $\emptyset$ is a $\cU$-ideal or not. The answer depends on whether $P$ has a bottom, and also whether $\emptyset\in\cU$. The table in figure \ref{F:empty} explains the situation in each case. Note that it is not possible for $P$ not to have a bottom and for $\emptyset\in\cU$, because this would contradict the first condition in the definition of a join-specification.

\begin{figure}[htbp]
\centering
\begin{tabular}{|l|l|l|}\hline
 & $\emptyset\in \cU$ & $\emptyset\not\in\cU$ \\ \hline
$P$ has bottom & No & Yes \\ \hline
$P$ has no bottom & Can't happen & Yes \\ \hline
\end{tabular}
\caption{Is $\emptyset$ a $\cU$-ideal?
}
\label{F:empty}
\end{figure}

\begin{defn}[$\eta:P\to\cI_\cU$]\label{D:eta}
Given a poset $P$ and a join-specification $\cU$ for $P$, we define $\cI_\cU$ to be the complete lattice of $\cU$-ideals of $P$ (ordered by inclusion). Define $\eta$ by $\eta(p)=\pd$ for all $p\in P$. We will sometimes mildly abuse our notation by using the common label `$\eta$' for maps that are technically different, but they will all be defined by $p\mapsto \pd$ so this should not cause confusion.
\end{defn}

\begin{defn}\label{D:pres}
Let $P$ be a poset, let $\cU$ be a join-specification for $P$, let $Q$ be another poset, and let $f:P\to Q$ be a map (not necessarily monotone). Then we say $f$ is $\cU$\textup{\textbf{-preserving}} if for all $S\in \cU$ we have $f(\bv
 S)=\bv f[S]$. We may also say $f$ \textup{\textbf{preserves joins from}} $\cU$, or that $f$ \textup{\textbf{preserves
 $\cU$-joins}}. If $f$ preserves all existing joins we say it is
 \textup{\textbf{completely join-preserving}}. When a $\cU$-preserving map is also monotone we say that it is a
 $\cU$\textup{\textbf{-morphism}}. Since completely join-preserving maps are necessarily monotone, a completely join-preserving map is a $\cU$-morphism for all $\cU$.
\end{defn}

\begin{prop}\label{P:pres}
$\eta:P\to \cI_\cU$ is a completely meet-preserving $\cU$-embedding, and also defines a join-completion.
\end{prop}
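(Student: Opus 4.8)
The plan is to establish, in order, the three claimed properties: that $\eta$ is an order embedding, that it preserves $\cU$-joins and all existing meets, and that it is a join-completion. The single observation that drives everything is that $\cI_\cU$ is closed under arbitrary intersections --- an intersection of downsets is a downset, and an intersection of sets each closed under joins of members of $\cU$ is again closed under such joins. Consequently $\cI_\cU$ is a complete lattice in which meets are intersections, and the join of a family $\{C_i\}_{i\in I}$ is the smallest $\cU$-ideal containing $\bigcup_I C_i$ (equivalently, the intersection of all $\cU$-ideals containing each $C_i$).

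First I would check that $\eta$ is an order embedding: monotonicity is immediate, and $\pd\subseteq\qd$ forces $p\in\qd$, hence $p\leq q$. In particular, for any downset $C$ we have $\pd\subseteq C$ iff $p\in C$. Using this, join-density is quick: for $C\in\cI_\cU$ the set $\eta[P]\cap C^\downarrow$ (the principal downset of $C$ in $\cI_\cU$) is exactly $\{\pd:p\in C\}$, whose union is $C$ (since $C$ is a downset), and since $C$ is already a $\cU$-ideal it equals its own closure, so $C=\bv(\eta[P]\cap C^\downarrow)$. As $\cI_\cU$ is a complete lattice, this makes $\eta$ a join-completion.

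For $\cU$-preservation, fix $S\in\cU$, so that $\bv S$ exists in $P$. The key sub-step, and the place where the definition of a join-specification is genuinely used, is to check that $(\bv S)^\downarrow$ is itself a $\cU$-ideal: it is a downset, and if $T\in\cU$ with $T\subseteq(\bv S)^\downarrow$ then each $t\in T$ satisfies $t\leq\bv S$, so $\bv T\leq\bv S$ and $\bv T\in(\bv S)^\downarrow$. Since $(\bv S)^\downarrow$ contains every $s^\downarrow$ with $s\in S$, it contains $\bv\eta[S]$; conversely any $\cU$-ideal containing all the $s^\downarrow$ contains $S$, hence $\bv S$, hence $(\bv S)^\downarrow$. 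Thus $\bv\eta[S]=(\bv S)^\downarrow=\eta(\bv S)$. The degenerate case $S=\emptyset$ (possible only when $\emptyset\in\cU$) is subsumed: there $\bv\emptyset$ is the bottom of $P$ and the identity becomes $\{\bot\}^\downarrow=\{\bot\}$, consistent with figure \ref{F:empty}. Finally, complete meet-preservation is now free from proposition \ref{P:ext}, since $\eta$ is a join-extension; alternatively one notes directly that when $\bw S$ exists in $P$ we have $(\bw S)^\downarrow=\bigcap_{s\in S}s^\downarrow$, and the right-hand side is the meet of $\eta[S]$ in $\cI_\cU$ because meets there are intersections. I anticipate no serious obstacle; the only point needing care is the verification that $(\bv S)^\downarrow$ is closed under $\cU$-joins.
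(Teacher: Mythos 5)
Your proof is correct, and it is essentially the standard argument that the paper does not reproduce but instead defers to \cite[proposition 2.9]{Egr17b}: meets in $\cI_\cU$ are intersections, $(\bv S)^\downarrow$ is itself a $\cU$-ideal and is the smallest one containing all $s^\downarrow$ for $s\in S$, and join-density plus proposition \ref{P:ext} gives complete meet-preservation. The one step genuinely needing care --- that $(\bv S)^\downarrow$ (and in particular each $\pd$) is closed under joins from $\cU$ --- is exactly the one you verify, and your handling of the $S=\emptyset$ case is consistent with figure \ref{F:empty}.
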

\begin{proof}
This is standard. A detailed argument can be found, for example, in \cite[proposition 2.9]{Egr17b}.
\end{proof}

\begin{defn}\label{D:rad}
Given a join-specification $\cU$ we define the \textup{\textbf{radius}} of $\cU$ to be the smallest cardinal $\sigma$ such that $\sigma> |S|$ for all $S\in\cU$.
\end{defn}

\begin{defn}[$\Gamma_\cU$]\label{D:G}
Given a join-specification $\cU$ with radius $\sigma$ and $S\subseteq P$ we define the following subsets of $P$ using transfinite recursion.
\begin{itemize}
\item $\Gamma_\cU^0(S) = S^\downarrow$.
\item If $\alpha+1$ is a successor ordinal then \[\Gamma_\cU^{\alpha+1}(S)=\{\bv T : T\in\cU\text{ and }T\subseteq \Gamma_\cU^\alpha(S)\}^\downarrow.\]
\item If $\lambda$ is a limit ordinal then $\Gamma_\cU^\lambda(S)=\bigcup_{\beta<\lambda}\Gamma_\cU^\beta(S)$.
\end{itemize}
We define $\Gamma_\cU:\wp(P)\to\wp(P)$ by $\Gamma_\cU(S)=\Gamma_\cU^{\hat\sigma}(S)$ for all $S\in\wp(P)$ (recalling our convention that $\hat\sigma$ is the smallest regular cardinal greater than or equal to $\sigma$).
\end{defn}

\begin{prop}\label{P:trans}
If $\cU$ is a join-specification for $P$ then $\Gamma_\cU$ is the standard closure operator taking $S\subseteq P$ to the smallest $\cU$-ideal containing $S$.
\end{prop}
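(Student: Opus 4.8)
The plan is to show three things about $\Gamma_\cU$: that it is a standard closure operator, that $\Gamma_\cU(S)$ is always a $\cU$-ideal, and that it is the \emph{smallest} $\cU$-ideal containing $S$. The last two points are closely linked: once we know $\Gamma_\cU(S)$ is a $\cU$-ideal containing $S$, minimality will follow by checking that any $\cU$-ideal $J\supseteq S$ satisfies $\Gamma_\cU^\alpha(S)\subseteq J$ for all $\alpha$, by transfinite induction. Indeed the base case is $S^\downarrow\subseteq J$ (since $J$ is a downset containing $S$); the successor case uses that $J$ is closed under $\cU$-joins of its subsets and downward-closed; and the limit case is immediate since $J$ contains each $\Gamma_\cU^\beta(S)$. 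Taking $\alpha=\hat\sigma$ gives $\Gamma_\cU(S)\subseteq J$.

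For the closure operator properties: \emph{extensivity} $S\subseteq \Gamma_\cU(S)$ is clear since $S\subseteq S^\downarrow=\Gamma_\cU^0(S)\subseteq\Gamma_\cU^{\hat\sigma}(S)$, using that the chain $\Gamma_\cU^\alpha(S)$ is increasing in $\alpha$ (a trivial transfinite induction). \emph{Monotonicity} follows by another transfinite induction on $\alpha$: if $S\subseteq S'$ then $\Gamma_\cU^\alpha(S)\subseteq\Gamma_\cU^\alpha(S')$, the successor step using that $T\subseteq\Gamma_\cU^\alpha(S)$ implies $T\subseteq\Gamma_\cU^\alpha(S')$. That $\Gamma_\cU$ is \emph{standard}, i.e. $\Gamma_\cU(\{p\})=\pd$, holds because $\{p\}^\downarrow=\pd$ is already a $\cU$-ideal: any $T\in\cU$ with $T\subseteq\pd$ has $\bv T\leq p$, so $\Gamma_\cU^{\alpha+1}(\{p\})=\pd$, and the recursion stabilizes immediately. \emph{Idempotency} will drop out once we show $\Gamma_\cU(S)$ is a $\cU$-ideal: a $\cU$-ideal $J$ satisfies $\Gamma_\cU^\alpha(J)=J$ for all $\alpha$ by the same stabilization argument, hence $\Gamma_\cU(\Gamma_\cU(S))=\Gamma_\cU(S)$.

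The main obstacle, and the technical heart of the argument, is showing that $\Gamma_\cU^{\hat\sigma}(S)$ is genuinely closed under $\cU$-joins (downward-closure is automatic, since each successor stage ends with a $(-)^\downarrow$ and limit stages are unions of downsets, so the whole chain consists of downsets). Suppose $T\in\cU$ with $T\subseteq\Gamma_\cU^{\hat\sigma}(S)$; we must show $\bv T\in\Gamma_\cU^{\hat\sigma}(S)$. Since $\hat\sigma$ is regular and $|T|<\sigma\leq\hat\sigma$, and the chain $(\Gamma_\cU^\alpha(S))_{\alpha<\hat\sigma}$ is increasing with union $\Gamma_\cU^{\hat\sigma}(S)$ (as $\hat\sigma$ is a limit ordinal, being an infinite regular cardinal — here one handles the degenerate case $\sigma\leq\omega$, i.e. finite joins only, by noting $\hat\sigma=\omega$ is still a limit), each element of $T$ appears at some stage $\alpha_t<\hat\sigma$, and by regularity $\sup_{t\in T}\alpha_t =:\alpha <\hat\sigma$. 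Then $T\subseteq\Gamma_\cU^\alpha(S)$, so $\bv T\in\Gamma_\cU^{\alpha+1}(S)\subseteq\Gamma_\cU^{\hat\sigma}(S)$, as $\alpha+1\leq\hat\sigma$. This cofinality argument is exactly what forces the choice of $\hat\sigma$ (rather than $\sigma$) in Definition \ref{D:G}, and is where I would spend the most care, particularly in isolating the edge cases around small radii. With all pieces assembled, $\Gamma_\cU$ is a standard closure operator whose closed sets are precisely the $\cU$-ideals, and $\Gamma_\cU(S)$ is the least one containing $S$.
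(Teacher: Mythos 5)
Your proof is correct and is precisely the ``straightforward approach'' the paper invokes, deferring the details to \cite[proposition 2.8]{Egr17b}: transfinite induction gives monotonicity, minimality and the closure-operator identities, and the regularity/cofinality argument at $\hat\sigma$ gives closure of $\Gamma_\cU(S)$ under $\cU$-joins. The only point the paper adds is that the treatment of $\emptyset$ (whether $\emptyset\in\cU$ and whether $P$ has a bottom) needs a remark, and your recursion handles those cases automatically, so nothing essential is missing.
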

\begin{proof}
The straightforward approach is sufficient. An explicit argument can be found as \cite[proposition 2.8]{Egr17b}. Only minor adjustments are needed to accommodate the different treatment of the empty set in definition \ref{D:ideal}.
\end{proof}

\begin{cor}\label{C:arrow}
Let $\cI_\cU$ be a frame, and for a finite indexing set $J$ let $S_j\subseteq P$ for all $j\in J$. Then $\Gamma_\cU(\bigcap_J S_j^\downarrow) = \bigcap_J \Gamma_\cU(S_j)$.
\end{cor}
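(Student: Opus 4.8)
The plan is to obtain this directly from Proposition \ref{P:arrow}, applied with $\Gamma=\Gamma_\cU$ and with $\Gamma'$ taken to be the downset operator $S\mapsto S^\downarrow$. First I would check that $\Gamma'$ is indeed a standard closure operator (it is extensive, monotone, idempotent, and $\{p\}\mapsto\pd$) and that $\Gamma'\leq\Gamma_\cU$. The latter is immediate from Definition \ref{D:G} and Proposition \ref{P:trans}: $\Gamma_\cU^0(S)=S^\downarrow$ and the transfinite sequence $(\Gamma_\cU^\alpha(S))_\alpha$ is increasing, so $S^\downarrow\subseteq\Gamma_\cU^{\hat\sigma}(S)=\Gamma_\cU(S)$ for every $S$. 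The lattice $L_{\Gamma'}$ of $\Gamma'$-closed sets is exactly $\cA(P)$, the lattice of all downsets of $P$, and meets in $\cA(P)$ are computed as intersections.

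Since $\cI_\cU$ is, by hypothesis, a frame, and $\cI_\cU$ is precisely the lattice $L_{\Gamma_\cU}$ of $\Gamma_\cU$-closed sets, Proposition \ref{P:arrow} tells us that the unique completely join-preserving map $\phi\colon\cA(P)\to\cI_\cU$ fixing $P$, namely $\phi(C)=\Gamma_\cU(C)$, preserves finite meets. Applying this to the downsets $S_j^\downarrow$ for $j$ in a finite indexing set $J$ gives
\[\Gamma_\cU\Bigl(\bigcap_J S_j^\downarrow\Bigr)=\phi\Bigl(\bigwedge_J S_j^\downarrow\Bigr)=\bigwedge_J\phi(S_j^\downarrow)=\bigcap_J\Gamma_\cU(S_j^\downarrow).\]

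To conclude I would note that $\Gamma_\cU(S_j^\downarrow)=\Gamma_\cU(S_j)$ for each $j\in J$: from $S_j\subseteq S_j^\downarrow\subseteq\Gamma_\cU(S_j)$ (extensivity, together with $\Gamma_\cU^0(S_j)=S_j^\downarrow$), monotonicity and idempotence of $\Gamma_\cU$ give $\Gamma_\cU(S_j)\subseteq\Gamma_\cU(S_j^\downarrow)\subseteq\Gamma_\cU(\Gamma_\cU(S_j))=\Gamma_\cU(S_j)$. Substituting this into the display yields $\Gamma_\cU(\bigcap_J S_j^\downarrow)=\bigcap_J\Gamma_\cU(S_j)$, as required. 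There is no genuine obstacle here, since all the content sits in Proposition \ref{P:arrow}; the only points deserving a line of justification are that the downset operator is a legitimate choice of $\Gamma'\leq\Gamma_\cU$ and that intersections compute finite meets in $\cA(P)$, both routine. (Alternatively one could avoid citing the proposition and simply re-run the frame computation from its proof with each $C_i$ taken to be $S_i^\downarrow$, but invoking Proposition \ref{P:arrow} is cleaner.)
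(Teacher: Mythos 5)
Your proof is correct and is essentially the paper's own argument: the paper also applies Proposition \ref{P:arrow} with $\Gamma=\Gamma_\cU$ and $\Gamma'=\Gamma_{B_P}$, and $\Gamma_{B_P}$ is exactly the downset operator $S\mapsto S^\downarrow$ you chose, with $L_{\Gamma'}=\cA(P)$. The extra details you supply (that $\Gamma'$ is a standard closure operator below $\Gamma_\cU$, that meets in $\cA(P)$ and $\cI_\cU$ are intersections, and that $\Gamma_\cU(S_j^\downarrow)=\Gamma_\cU(S_j)$) are precisely the routine verifications the paper leaves implicit.
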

\begin{proof}
This follows from propositions \ref{P:trans} and \ref{P:arrow} by setting $\Gamma = \Gamma_\cU$ and $\Gamma'= \Gamma_{B_P}$, where $B_P$ is the minimal join-specification (see definition \ref{D:B} below).
\end{proof}
The restriction to finite $J$ in corollary \ref{C:arrow} is necessary, as we shall see in example \ref{E:strength} later.

Note that given $P$ and $\cU$, the $\cU$-ideal completion $\eta:P\to \cI_\cU$ is universal in the following sense.

\begin{prop}\label{P:univ}
Let $P$ be a poset, let $\cU$ be a join-specification for $P$, let $L$ be a complete lattice, and let $e:P\to L$ be a $\cU$-embedding. Then there is a unique completely join-preserving map $h:\cI_\cU\to L$ that fixes $P$ with respect to $e$ and the canonical embedding $\eta:P\to\cI_\cU$.
\end{prop}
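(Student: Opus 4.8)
The plan is to define $h$ explicitly by $h(I)=\bv_{p\in I}e(p)$ for each $\cU$-ideal $I$, the join being taken in $L$ (it exists since $L$ is complete; note $h(\emptyset)=\bot_L$ in case $\emptyset$ is a $\cU$-ideal). I would then verify the three required properties in turn: that $h$ fixes $P$, that $h$ is completely join-preserving, and that it is the unique map with these two features.

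That $h$ fixes $P$ is immediate: for $p\in P$ we have $\eta(p)=\pd$, and since $e$ is an order embedding, $e(p)$ is the largest element of $e[\pd]$, so $h(\pd)=\bv e[\pd]=e(p)$. For complete join-preservation, take a family $\{I_k\}_{k\in K}$ of $\cU$-ideals; by proposition \ref{P:trans} its join in $\cI_\cU$ is $\Gamma_\cU(\bigcup_K I_k)$, so what must be shown is $\bv_{p\in\Gamma_\cU(\bigcup_K I_k)}e(p)=\bv_{p\in\bigcup_K I_k}e(p)$, the right-hand side already being $\bv_K h(I_k)$. One inequality is automatic from monotonicity of $e$, so it suffices to prove, for an arbitrary $S\subseteq P$, that $e(p)\leq\ell_S$ for all $p\in\Gamma_\cU(S)$, where $\ell_S:=\bv_{p\in S}e(p)$.

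I would prove this by transfinite induction on the stages $\Gamma_\cU^\alpha(S)$ of definition \ref{D:G}, the inductive claim being that $e(p)\leq\ell_S$ for every $p\in\Gamma_\cU^\alpha(S)$. The base case $\Gamma_\cU^0(S)=S^\downarrow$ uses only monotonicity of $e$. At a successor stage $\alpha+1$, a new element lies below some $\bv T$ with $T\in\cU$ and $T\subseteq\Gamma_\cU^\alpha(S)$; here we use that $e$ preserves $\cU$-joins, so $e(\bv T)=\bv e[T]$, and each member of $e[T]$ is $\leq\ell_S$ by the induction hypothesis, whence $e(\bv T)\leq\ell_S$, and monotonicity of $e$ handles the downward closure. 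Limit stages are trivial since $\Gamma_\cU^\lambda(S)$ is the union of the earlier stages. This transfinite induction, and in particular its successor step, is the only place where real content enters: it is exactly the hypothesis that $e$ preserves $\cU$-joins that makes it work, so this is the step I expect to be the crux.

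For uniqueness, observe that since $\eta:P\to\cI_\cU$ is a join-completion (proposition \ref{P:pres}), every $\cU$-ideal satisfies $I=\bigcup_{p\in I}\pd=\bv_{p\in I}\eta(p)$ in $\cI_\cU$, the union on the left already being a $\cU$-ideal because $I$ is. Hence if $h':\cI_\cU\to L$ is completely join-preserving and fixes $P$, then $h'(I)=\bv_{p\in I}h'(\eta(p))=\bv_{p\in I}e(p)=h(I)$, so $h'=h$. The remaining verifications are routine bookkeeping.
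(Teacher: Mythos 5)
Your proposal is correct, but it takes a different route from the paper. The paper gets existence as an instance of its general closure-operator machinery: it forms the subposet $L'$ of $L$ join-generated by $e[P]$, observes that $e:P\to L'$ is a join-completion whose associated standard closure operator $\Gamma_e$ dominates $\Gamma_\cU$ (precisely because $e$ is $\cU$-preserving, so the $\Gamma_e$-closed sets are particular $\cU$-ideals), and then invokes proposition \ref{P:arrow} to produce the completely join-preserving map $\cI_\cU\to L'$, composing with the (completely join-preserving) inclusion $L'\hookrightarrow L$. You instead write down the map explicitly, $h(I)=\bv e[I]$, and verify complete join-preservation by hand via transfinite induction along the stages $\Gamma_\cU^\alpha(S)$ of definition \ref{D:G}; your uniqueness argument (join-density of $\eta[P]$ plus fixing $P$) is the same as the paper's. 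Both are sound: the paper's proof is shorter because it reuses propositions \ref{P:arrow} and \ref{P:closure} and makes clear the result is a special case of the closure-operator correspondence, while yours is self-contained and exhibits the formula for $h$ directly. Your transfinite induction is essentially a re-proof of the relevant part of that machinery; it can even be compressed without induction by noting that $D=\{p\in P: e(p)\leq \bv e[S]\}$ is down-closed (monotonicity of $e$) and closed under $\cU$-joins ($\cU$-preservation of $e$), hence is a $\cU$-ideal containing $S$, so $\Gamma_\cU(S)\subseteq D$ by proposition \ref{P:trans}, which is exactly the inequality you need. The only point to watch in your write-up is the empty ideal: to have $h(\emptyset)$ forced to be $\bot_L$ in the uniqueness step you should read ``completely join-preserving'' as including the empty join, the same reading the paper's own uniqueness argument requires.
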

\begin{proof}
That $h$ must be unique if it exists follows from the condition that it fixes $P$, and the fact that $\cI_\cU$ is a
 join-completion. Existence of $h$ will follow from proposition \ref{P:arrow} and the correspondence between
 join-completions of $P$ and lattices of closed sets of closure operators on $P$ from proposition \ref{P:closure}.

To see this, let
 $L'$ be the subposet of $L$ join-generated by $e[P]$. then $L'$ is not necessarily a
 sublattice of $L$, but the inclusion of $L'$ into $L$ is completely join-preserving. Moreover, $e:P\to L'$ is a
 join-completion of $P$. As $e$ is $\cU$-preserving, we can think of $L'$ as being a complete lattice of
 $\cU$-ideals, while $\cI_\cU$ is the
 complete lattice of \emph{all} $\cU$-ideals. Thus, if $\Gamma_\eta$ and $\Gamma_e$ are the closure operators associated with
 $\eta$ and $e$ respectively, we have $\Gamma_\eta\leq\Gamma_e$, and so proposition \ref{P:arrow} applies, providing a
 completely join-preserving map $h:\cI_\cU\to L'$ fixing $P$. Composing this with the inclusion of $L'$ into $L$ gives the
 required map.
\end{proof}

One may wonder under what circumstances the universal property from proposition \ref{P:univ} applies if we restrict to frames and frame morhpisms. We return to this question in section \ref{S:global}.

Standard closure operators also define join-specifications.

\begin{defn}[$\cU_\Gamma$]\label{D:UG}
If $\Gamma$ is a standard closure operator on $P$ then we define $\cU_\Gamma$ to be the join-specification on $P$ defined by
\[S\in \cU_\Gamma\iff \bv S \text { exists and } \bv S\in\Gamma(S)\]
\end{defn}

In other words, $\cU_\Gamma$ contains every set whose join is preserved by the canonical embedding $\eta:P\to L_\Gamma$. The set of join-specifications for a poset $P$ is a complete lattice when ordered by inclusion. It's easy to see that join and meet in this lattice are just union and intersection.
  Viewing the complete lattices of join-specifications and standard closure operators for a given poset $P$ as categories, we can define functors $\F$ and $\G$ by $\F:\cU\mapsto \Gamma_\cU$ and $\G:\Gamma\mapsto \cU_\Gamma$, and prove the following result.

\begin{prop}\label{P:adj}
With $\F$ and $\G$ as above:
\begin{enumerate}[(1)]
\item $\F\dashv \G$.
\item Neither $\F\circ\G$ nor $\G\circ\F$ is naturally isomorphic to the corresponding identity functor.
\item $\F\circ\G\circ\F = \F$ and $\G\circ\F\circ\G = \G$.
\end{enumerate}
\end{prop}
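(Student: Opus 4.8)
The plan is to deduce all three parts from a single Galois-connection statement. Since the poset of join-specifications and the poset of standard closure operators are being treated as categories, an adjunction $\F\dashv\G$ between them is precisely a monotone Galois connection, so part (1) reduces to checking that $\F$ and $\G$ are well defined as object-assignments --- $\Gamma_\cU$ is a standard closure operator by proposition \ref{P:trans}, and $\cU_\Gamma$ is a join-specification because $\bv\{p\}=p\in\pd=\Gamma(\{p\})$ for any standard $\Gamma$ --- and then establishing
\[\Gamma_\cU\leq\Gamma\iff\cU\subseteq\cU_\Gamma\]
for every join-specification $\cU$ and standard closure operator $\Gamma$; monotonicity (hence functoriality) of $\F$ and $\G$ then falls out automatically. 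For the forward implication, if $\Gamma_\cU\leq\Gamma$ and $S\in\cU$, then $S\subseteq\Gamma_\cU(S)$ and $\Gamma_\cU(S)$ is a $\cU$-ideal, so $\bv S\in\Gamma_\cU(S)\subseteq\Gamma(S)$ and hence $S\in\cU_\Gamma$. For the converse, assuming $\cU\subseteq\cU_\Gamma$ I will show that $\Gamma(S)$ is a $\cU$-ideal containing $S$ for every $S\subseteq P$, so that $\Gamma_\cU(S)\subseteq\Gamma(S)$ by the minimality in proposition \ref{P:trans}: it contains $S$ by extensivity; it is a downset because every $\Gamma$-closed set $C$ is down-closed (if $q\leq p\in C$ then $q\in\pd=\Gamma(\{p\})\subseteq\Gamma(C)=C$, using that $\Gamma$ is standard and monotone); and it is closed under joins from $\cU$ since whenever $T\in\cU$ and $T\subseteq\Gamma(S)$ we have $T\in\cU_\Gamma$, hence $\bv T\in\Gamma(T)\subseteq\Gamma(\Gamma(S))=\Gamma(S)$ by monotonicity and idempotence.

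With (1) established, part (3) is pure formalism: in any Galois connection $f\dashv g$ between posets one has $fgf=f$ and $gfg=g$, since the unit $x\leq gf(x)$ together with monotonicity of $f$ gives $f(x)\leq fgf(x)$, while the counit evaluated at $f(x)$ gives $fgf(x)\leq f(x)$, and dually for $g$. Taking $f=\F$ and $g=\G$ yields $\F\circ\G\circ\F=\F$ and $\G\circ\F\circ\G=\G$ as equalities of functors, which is exactly the claim (a natural isomorphism between functors valued in a poset-category is just equality of the functors).

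For part (2) I will exhibit explicit witnesses. To see that $\G\circ\F$ is not the identity, take $\cU=B_P$ (the minimal join-specification $\{\,\{p\}:p\in P\,\}$) on any poset $P$ that is not an antichain: then $\Gamma_{B_P}$ is down-closure, so $\cU_{\Gamma_{B_P}}=\{S\subseteq P:\bv S\in S^\downarrow\}$, which contains $\{p,q\}$ for every pair $p<q$ and hence strictly contains $B_P$ --- the two-element chain already does this. To see that $\F\circ\G$ is not the identity, take $P$ to be a three-element antichain, or the ``fork'' consisting of a bottom element below three incomparable atoms, and let $\Gamma$ be the standard closure operator whose closed sets are all downsets except the maximal proper ones. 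A short calculation gives $\cU_\Gamma=B_P$, whence $\Gamma_{\cU_\Gamma}=\Gamma_{B_P}$ is again just down-closure, strictly below $\Gamma$. On the fork both failures occur at once, giving a single example for all of (2).

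The only genuinely substantive step is the converse half of the biconditional in (1): showing $\Gamma(S)$ is always a $\cU$-ideal. Here the ``downset'' clause is exactly where the hypothesis that $\Gamma$ is \emph{standard} is used, and the ``closed under $\cU$-joins'' clause is where idempotence of $\Gamma$ combines with $\cU\subseteq\cU_\Gamma$; everything else is routine order theory or formal nonsense about adjunctions. I will also glance at the degenerate case $\emptyset\in\cU$ (possible only when $P$ has a bottom $\bot=\bv\emptyset$), but it causes no trouble: $\bot$ lies in every $\cU$-ideal, and the membership $\emptyset\in\cU_\Gamma$ needed on the right-hand side amounts to $\bot\in\Gamma(\emptyset)$, which by monotonicity then supplies $\bot\in\Gamma(S)$ for all $S$ in the converse direction.
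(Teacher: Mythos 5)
Your proof is correct, but it takes a different route from the paper: the paper disposes of parts (1) and (2) by citing its predecessor (\cite[proposition 2.12]{Egr17b} for the Galois connection and \cite[examples 2.14, 2.15]{Egr17b} for the counterexamples), and only observes, as you do, that part (3) is formal for any Galois connection. You instead give a self-contained argument, and your key step --- the biconditional $\Gamma_\cU\leq\Gamma\iff\cU\subseteq\cU_\Gamma$, proved via ``$\Gamma(S)$ is a $\cU$-ideal whenever $\cU\subseteq\cU_\Gamma$'' together with the minimality statement of proposition \ref{P:trans} --- is exactly the right content and uses only material already established in this paper (standardness for down-closure, idempotence plus $\cU\subseteq\cU_\Gamma$ for closure under $\cU$-joins). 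What your approach buys is independence from \cite{Egr17b}; what it costs is only length. One small inaccuracy in part (2): on the ``fork'' your claim $\cU_\Gamma=B_P$ is not quite right, since for an atom $a$ the set $\{\bot,a\}$ has join $a\in\Gamma(\{\bot,a\})$, so $\cU_\Gamma$ is in fact $B_P^+$ (all sets containing their maximum); the conclusion is unaffected because $\Gamma_{B_P^+}=\Gamma_{B_P}$ is still down-closure, which is strictly below your $\Gamma$, and on the three-element antichain the computation is exactly as you state. Also note that, as in the paper's reading, part (2) is an existence claim (for suitable $P$, $\cU$, $\Gamma$); on, say, a one-element poset both composites are the identity, so your explicit witnesses are genuinely needed.
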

\begin{proof}
As the categories in question are both posets, part 1 is equivalent to saying that $\F$ and $\G$ form a Galois connection, and part 2 is equivalent to saying that there are join-specifications $\cU$ with $\cU_{\Gamma_\cU}\neq \cU$ and $\Gamma_{\cU_\Gamma}\neq\Gamma$. A proof for part 1, and examples for part 2, can be found as \cite[proposition 2.12]{Egr17b} and \cite[examples 2.14 and 2.15]{Egr17b} respectively. Part 3 follows from part 1 and the observation that this is just a Galois connection.
\end{proof}

\begin{defn}[$\cU^+$]\label{D:plus}
If $\cU$ is a join-specification then we define
\[\cU^+ = \cU_{\Gamma_\cU} = \G\circ\F (\cU).\]
\end{defn}

\begin{defn}\label{D:max}
We say a join-specification $\cU$ is \textup{\textbf{maximal}} if $\cU=\cU^+$. I.e. if it is in $\mathbf{Fix}(\G\circ\F)$.
\end{defn}

\begin{lemma}\label{L:capmax}
If $I$ is an indexing set and $\cU_i$ is a maximal join-specification for each $i\in I$, then $\bigcap_I \cU_i$ is also maximal.
\end{lemma}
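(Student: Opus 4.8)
The plan is to use the characterization of maximality in terms of the Galois connection $\F \dashv \G$ from Proposition \ref{P:adj}. Recall that $\cU$ is maximal precisely when $\cU = \G\circ\F(\cU) = \cU_{\Gamma_\cU}$, and that since we are dealing with a Galois connection, the image of $\G$ consists exactly of the fixed points of $\G\circ\F$. So it suffices to show that $\bigcap_I \cU_i$ lies in the image of $\G$, i.e. that there is some standard closure operator $\Gamma$ with $\cU_\Gamma = \bigcap_I \cU_i$. The natural candidate is $\Gamma = \bw_I \Gamma_{\cU_i}$, the meet in the complete lattice $\Clos$ of standard closure operators, which (as noted in the discussion before Proposition \ref{P:closure}) is computed pointwise: $\Gamma(S) = \bigcap_I \Gamma_{\cU_i}(S)$ for all $S \subseteq P$.

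First I would check $\Gamma$ is genuinely a standard closure operator: it is extensive, monotone and idempotent as an intersection of such operators (idempotence needs a small argument, but is routine), and $\Gamma(\{p\}) = \bigcap_I \Gamma_{\cU_i}(\{p\}) = \bigcap_I \pd = \pd$ since each $\cU_i$ is a join-specification. Then the heart of the proof is the identity $\cU_\Gamma = \bigcap_I \cU_{\Gamma_{\cU_i}}$. Unwinding definition \ref{D:UG}: $S \in \cU_\Gamma$ iff $\bv S$ exists and $\bv S \in \Gamma(S) = \bigcap_I \Gamma_{\cU_i}(S)$, which holds iff $\bv S$ exists and $\bv S \in \Gamma_{\cU_i}(S)$ for every $i$, i.e. iff $S \in \cU_{\Gamma_{\cU_i}}$ for every $i$. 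Finally, since each $\cU_i$ is maximal, $\cU_{\Gamma_{\cU_i}} = \G\circ\F(\cU_i) = \cU_i$, so $\cU_\Gamma = \bigcap_I \cU_i$. Hence $\bigcap_I \cU_i$ is in the image of $\G$, and therefore is a fixed point of $\G\circ\F$, i.e. maximal.

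The only genuine subtlety — and what I'd expect to be the main thing to get right — is confirming that a pointwise intersection of standard closure operators really is again a standard closure operator, in particular the idempotence: one needs $\Gamma(\Gamma(S)) = \Gamma(S)$, and while $\subseteq$ is not automatic for arbitrary monotone extensive maps, here $\Gamma(S) \subseteq \Gamma_{\cU_i}(S)$ gives $\Gamma(\Gamma(S)) \subseteq \Gamma_{\cU_i}(\Gamma_{\cU_i}(S)) = \Gamma_{\cU_i}(S)$ for each $i$ by monotonicity and idempotence of $\Gamma_{\cU_i}$, and intersecting over $i$ closes the argument. This fact is already implicit in the statement (made in the excerpt) that the standard closure operators form a complete lattice with meets given pointwise, so one could alternatively just cite that. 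Everything else is a direct unwinding of definitions and the formal properties of the Galois connection.
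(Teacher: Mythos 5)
Your proof is correct and takes essentially the same route as the paper: both arguments run through the Galois connection of Proposition \ref{P:adj} and the identity $\cU_{\bw_I \Gamma_{\cU_i}} = \bigcap_I \cU_{\Gamma_{\cU_i}}$, which the paper justifies by noting that right adjoints preserve limits and you verify directly from Definition \ref{D:UG}. Your packaging via ``the image of $\G$ is exactly the set of fixed points of $\G\circ\F$'' (which follows from $\G\circ\F\circ\G=\G$, Proposition \ref{P:adj}(3)) is only a mild rearrangement of the paper's inclusion chain $(\bigcap_I\cU_i)^+\subseteq \cU_{\bw_I\Gamma_{\cU_i}}=\bigcap_I\cU_i$, and your check that the pointwise meet of standard closure operators is again a standard closure operator is precisely the fact the paper takes for granted in its description of the lattice $\Clos$.
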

\begin{proof}
Let $S\subseteq P$. Then $\Gamma_{\bigcap_I \cU_i}(S)\subseteq \bigcap_I \Gamma_{\cU_i}(S)$. Thus $\Gamma_{\bigcap_I \cU_i}\subseteq \bw_I \Gamma_{\cU_i}$, by definition of meet in the lattice of standard closure operators on $P$. So, applying the functor $\G$ from proposition \ref{P:adj} gives us

\[(\bigcap_I \cU_i)^+\subseteq \cU_{\bw_I \Gamma_{\cU_i}}= \bigcap_I \cU_i^+= \bigcap_I\cU_i,\]

where the first equality holds as right adjoints preserve limits, and the second equality holds by the maximality of the $\cU_i$. Thus $(\bigcap_I \cU_i)^+= \bigcap_I\cU_i$, as the other inclusion is automatic, and so $\bigcap_I\cU_i$ is maximal as required.

\end{proof}

It will be useful to give names to the `minimal' join-specifications.

\begin{defn}[$B_P$, $B_P^+$]\label{D:B}
Given a poset $P$ define $B_P$ to be the set of all singleton subsets of $P$. Define $B_P^+$ to be the maximal join-specification generated by $B_P$ (this is just going to be the set of all subsets of $P$ that contain their maximum).
\end{defn}

Example \ref{E:strict} later demonstrates that inclusion $(\bigcap_I \cU_i)^+\subseteq \cU_{\bw_I \Gamma_{\cU_i}}$ in the proof of lemma \ref{L:capmax} can be strict, even with some additional restrictions on the join-specifications involved.

We can consider an alternative `closing' process on $\wp(P)$, as is done in \cite{Egr17b}. This gives rise to definition \ref{D:U}, which we will revise in definition \ref{D:Unew}.

\begin{defn}[temporary]\label{D:U}
Given a join-specification $\cU$ with radius $\sigma$ and $S\subseteq P$ we define the following subsets of $P$ using transfinite recursion.
\begin{itemize}
\item $\Upsilon_\cU^0(S) = S^\downarrow$.
\item If $\alpha+1$ is a successor ordinal then \[\Upsilon_\cU^{\alpha+1}(S)=\{\bv T : T\in\cU^+\text{] and T}\subseteq \Upsilon_\cU^\alpha(S)\}.\]
\item If $\lambda$ is a limit ordinal then $\Upsilon_\cU^\lambda(S)=\bigcup_{\beta<\lambda}\Upsilon_\cU^\beta(S)$.
\end{itemize}
We define $\Upsilon_\cU:\wp(P)\to\wp(P)$ by $\Upsilon_\cU(S)=\Upsilon_\cU^{\hat\sigma}(S)$ for all $S\in\wp(P)$.
\end{defn}

This process fails to be idempotent in general, so is not usually a closure operator. However, in the special case where $\cI_\cU$ is a frame, it coincides with $\Gamma_\cU$, as shown in \cite[theorem 3.5]{Egr17b}, which we state now without proof.

\begin{thm}\label{T:old}
The following are equivalent:
\begin{enumerate}[(1)]
\item $\Gamma_\cU(S)=\Upsilon_\cU(S)$ for all $S\in\wp(S)$.
\item $\cI_\cU$ is a frame.
\end{enumerate}
\end{thm}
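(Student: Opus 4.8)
The plan is to prove the two implications separately, exploiting the fact that $\Upsilon_\cU$ is built from $\cU^+$-joins rather than $\cU$-joins, and that by definition of $\cU^+$ these are exactly the joins preserved by $\eta : P \to \cI_\cU$.

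First I would show that $(2) \Rightarrow (1)$. Assume $\cI_\cU$ is a frame. The easy inclusion is $\Upsilon_\cU(S) \subseteq \Gamma_\cU(S)$: since every $T \in \cU^+$ has its join preserved by $\eta$, i.e.\ $\bv T \in \Gamma_\cU(T)$, one checks by transfinite induction on $\alpha$ that $\Upsilon_\cU^\alpha(S) \subseteq \Gamma_\cU(S)$ for all $\alpha$ (using that $\Gamma_\cU(S)$ is already a $\cU$-ideal, hence downward closed and closed under the relevant joins, and that $\cU^+$-joins of subsets of a $\cU$-ideal land back in it because $\Gamma_\cU(T) \subseteq \Gamma_\cU(S)$ whenever $T \subseteq \Gamma_\cU(S)$). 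For the reverse inclusion $\Gamma_\cU(S) \subseteq \Upsilon_\cU(S)$, the key point is that when $\cI_\cU$ is a frame, $\Upsilon_\cU(S)$ is \emph{already} a $\cU$-ideal after $\hat\sigma$ steps: it is downward closed (each $\Upsilon_\cU^{\alpha+1}(S)$ consists of joins $\bv T$ with $T \subseteq \Upsilon_\cU^\alpha(S)$, and since $\{p\} \in \cU^+ = B_P^+$-style singletons lie in $\cU^+$, every element of $\Upsilon_\cU^\alpha(S)$ reappears, but one must argue downward closure more carefully — this is where corollary~\ref{C:arrow} and the frame property enter, letting one absorb $\xd \cap (\,\cdot\,)$ into the closure). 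Granting that $\Upsilon_\cU(S)$ is a $\cU$-ideal containing $S$, minimality of $\Gamma_\cU(S)$ (proposition~\ref{P:trans}) forces $\Gamma_\cU(S) \subseteq \Upsilon_\cU(S)$, giving equality.

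For $(1) \Rightarrow (2)$, I would argue contrapositively or directly: assuming $\Gamma_\cU = \Upsilon_\cU$ pointwise, I want to verify the frame law $\xd \cap \bv_{i} C_i = \bv_i(\xd \cap C_i)$ in $\cI_\cU$, or equivalently (via proposition~\ref{P:arrow} with $\Gamma = \Gamma_\cU$, $\Gamma' = \Gamma_{B_P}$) that $\Gamma_\cU$ distributes over finite intersections of principal downsets in the sense of corollary~\ref{C:arrow}. The mechanism is that $\Upsilon_\cU$, being assembled purely from $\cU^+$-joins, interacts well with intersections: one shows by transfinite induction that $\Upsilon_\cU^\alpha(\xd \cap \bigcup_i C_i^\downarrow)$ is controlled by $\bigcap$ of the corresponding stages, using that $\cU^+$ is maximal so that a $\cU^+$-join of elements each lying below $x$ again lies below $x$. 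Once the distributivity identity of corollary~\ref{C:arrow} is recovered for $\Gamma_\cU$, proposition~\ref{P:arrow} yields that $\cI_\cU = L_{\Gamma_\cU}$ is a frame.

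The main obstacle I anticipate is the downward-closure argument for $\Upsilon_\cU(S)$ in the $(2)\Rightarrow(1)$ direction: unlike $\Gamma_\cU$, the operator $\Upsilon_\cU$ does \emph{not} close downwards at each successor step (that is precisely the point of \cite{Egr17b}), so showing it nonetheless ends up downward closed when $\cI_\cU$ is a frame requires genuinely using the frame hypothesis — concretely, that if $y \leq \bv T$ with $T \subseteq \Upsilon_\cU^\alpha(S)$ then $y = \bv_{t \in T}(y \wedge t)$ in $\cI_\cU$, expressing $y$ as a $\cU^+$-join of elements each already at a bounded stage. Making this stage-counting fit inside $\hat\sigma$ steps, and checking the regularity of $\hat\sigma$ is enough to close the induction at limits, is the delicate part; everything else is a routine transfinite induction plus an invocation of proposition~\ref{P:arrow} and corollary~\ref{C:arrow}.
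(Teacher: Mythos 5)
The crux is the down-closure step in your $(2)\Rightarrow(1)$ direction, and the concrete mechanism you offer for it does not work. From $y\leq\bv T$ with $T\subseteq\Upsilon_\cU^\alpha(S)$, the frame law only yields an identity between ideals, $\eta(y)=\bv_{t\in T}\bigl(\eta(y)\wedge\eta(t)\bigr)=\Gamma_\cU(y^\downarrow\cap T^\downarrow)$; the meets $y\wedge t$ need not exist in $P$, and converting this into ``$y$ is the join of a member of $\cU^+$'' requires lemma \ref{L:fix} (an upper bound of $Y$ lying in $\Gamma_\cU(Y)$ must equal $\bv Y$, and then $Y\in\cU^+$), which you never invoke. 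More seriously, the elements of the witnessing set $y^\downarrow\cap T^\downarrow$ are only \emph{below} elements of $\Upsilon_\cU^\alpha(S)$; since the successor stages of definition \ref{D:U} are not down-closed, they need not lie in any stage at all, so ``each already at a bounded stage'' is unjustified. And even granting recursively that each lies in some stage, $\cU^+$ (unlike $\cU$) is subject to no cardinality bound --- it contains every set with a maximum --- so a member of $\cU^+$ contained in $\Upsilon_\cU^{\hat\sigma}(\cdot)$ need not be contained in any single stage below $\hat\sigma$, and regularity of $\hat\sigma$ cannot close the induction. The ``stage-counting'' you flag as the delicate part is therefore exactly the unresolved gap. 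A smaller imprecision: in $(1)\Rightarrow(2)$ the identity to be verified is $\pd\cap\Gamma_\cU(S)=\Gamma_\cU(\pd\cap S^\downarrow)$ (one principal downset against the closure of an arbitrary set, as in theorem \ref{T:gen}(\ref{T''})); distributivity over intersections of principal downsets alone is trivially true and carries no content, and the full corollary \ref{C:arrow} identity is not obviously available before frame-ness is established.

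The missing idea is that no transfinite bookkeeping is needed at all: by lemmas \ref{L:cap} and \ref{L:equal} (consequences of lemma \ref{L:fix}, with no frame hypothesis), $\Upsilon_\cU(S)=\Upsilon_\cU^1(S)$ for every $S$. With that in hand your corollary \ref{C:arrow} gesture does become a proof of $(2)\Rightarrow(1)$: if $\cI_\cU$ is a frame and $y\in\Gamma_\cU(S)$, then $\Gamma_\cU(y^\downarrow\cap S^\downarrow)=y^\downarrow\cap\Gamma_\cU(S)=y^\downarrow$, so lemma \ref{L:fix} gives $y^\downarrow\cap S^\downarrow\in\cU^+$ with join $y$, i.e.\ $y\in\Upsilon_\cU^1(S)$ --- no induction and no separate down-closure analysis. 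Likewise $(1)\Rightarrow(2)$ is then one line: if $z\in D\cap\Upsilon_\cU(\bigcup_I C_i)$ with $D$ a $\cU$-ideal, the witnessing $\cU^+$-set lies in the down-closed $D$, giving the frame law directly. This one-step termination, fed through the chain (\ref{f})$\Rightarrow$(\ref{T''})$\Rightarrow$(\ref{D})$\Rightarrow$(\ref{U})$\Rightarrow$(\ref{i})$\Rightarrow$(\ref{f}) of theorem \ref{T:gen}, is precisely how the paper establishes the equivalence; without it, your sketch leaves the essential step unproved.
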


Further thought reveals that the construction of $\Upsilon_\cU$ is unnecessarily complicated, and that it is sufficient for our purpose to terminate the process at $\Upsilon_\cU^1$ (see lemma \ref{L:equal}). Using this observation, we will present an improved version of the theorem above as theorem \ref{T:gen}.

\section{Generating frames with join-specifications}\label{S:ideals}
This section is mainly concerned with presenting an improved version of \cite[theorem 3.5]{Egr17b}. We use several results from \cite{Egr17b}, though in many cases we can simplify their proofs. Where this is possible we provide new versions. Let $P$ be a poset and let $\cU$ be a join-specification for $P$. Let $\Gamma_\cU$ and $\Upsilon_\cU$ be as in section \ref{S:ext}.

\begin{lemma}\label{L:fix}
Let $\cU$ be a join-specification for $P$. Then the following hold for all $S\in\wp(P)$:
\begin{enumerate}[(1)]
\item If $p\geq s$ for all $s\in S$, then $p\in\Gamma_\cU(S) \implies p=\bv S$.
\item If $\bv S\in\Gamma_\cU(S)$ then $S\in \cU^+$.
\end{enumerate}
\end{lemma}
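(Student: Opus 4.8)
The plan is to prove each part by unwinding the definition of $\Gamma_\cU$ (definition \ref{D:G}) and using the defining property of a supremum.

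\medskip

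\noindent\textbf{Part (1).} Suppose $p$ is an upper bound for $S$, i.e. $p \geq s$ for all $s \in S$. I want to show that if $p \in \Gamma_\cU(S)$ then $p = \bv S$. Since $\bv S$ exists and $p$ is an upper bound for $S$, automatically $\bv S \leq p$; so it suffices to show $p \leq \bv S$. Write $q = \bv S$ and note $q \in \Gamma_\cU^0(S) = S^\downarrow$ is false in general, but certainly $q$ is an upper bound of $S$ too, and $S \subseteq q^\downarrow$. The key observation is that $q^\downarrow$ is itself a $\cU$-ideal: it is clearly a downset, and if $T \in \cU$ with $T \subseteq q^\downarrow$ then $\bv T \leq q$ (as $q$ is an upper bound of $T$), so $\bv T \in q^\downarrow$. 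Hence $q^\downarrow$ is a $\cU$-ideal containing $S$, so by proposition \ref{P:trans} (which says $\Gamma_\cU(S)$ is the \emph{smallest} $\cU$-ideal containing $S$) we get $\Gamma_\cU(S) \subseteq q^\downarrow$. Therefore $p \in \Gamma_\cU(S) \subseteq q^\downarrow$ gives $p \leq q = \bv S$, and combined with $\bv S \leq p$ we conclude $p = \bv S$. (Alternatively one could run a transfinite induction up the $\Gamma_\cU^\alpha$ hierarchy showing every stage is contained in $q^\downarrow$, but the "smallest $\cU$-ideal" characterization makes this immediate.)

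\medskip

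\noindent\textbf{Part (2).} Suppose $\bv S \in \Gamma_\cU(S)$; I must show $S \in \cU^+ = \cU_{\Gamma_\cU}$. By definition \ref{D:UG} of $\cU_\Gamma$ (applied with $\Gamma = \Gamma_\cU$), we have $S \in \cU_{\Gamma_\cU}$ if and only if $\bv S$ exists and $\bv S \in \Gamma_\cU(S)$. The first condition holds because $\Gamma_\cU(S)$ being nonempty-relevant aside, the hypothesis $\bv S \in \Gamma_\cU(S)$ presupposes $\bv S$ exists; and the second condition is exactly the hypothesis. Hence $S \in \cU_{\Gamma_\cU} = \cU^+$ by definition \ref{D:plus}.

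\medskip

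\noindent I do not anticipate a serious obstacle here; the only mild subtlety is making sure in part (1) that one correctly invokes the minimality clause of proposition \ref{P:trans} rather than attempting a direct transfinite induction, and in part (2) that one matches the hypothesis precisely against the defining condition for membership in $\cU_{\Gamma_\cU}$. Both parts are essentially immediate once the relevant definitions are unwound.
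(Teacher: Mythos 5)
The paper itself gives no written-out proof of lemma \ref{L:fix} (it only cites \cite[lemma 2.18]{Egr17b} as a minor variation), so your argument has to stand on its own. Part (2) is indeed purely definitional, exactly as you say: by definitions \ref{D:UG} and \ref{D:plus}, membership of $S$ in $\cU^+=\cU_{\Gamma_\cU}$ means precisely that $\bv S$ exists and $\bv S\in\Gamma_\cU(S)$, and the hypothesis supplies both. Your central observation for part (1) --- that $q^\downarrow$ is always a $\cU$-ideal, so by proposition \ref{P:trans} it contains $\Gamma_\cU(S)$ as soon as it contains $S$ --- is also the right idea.

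However, as written, part (1) has a small but genuine gap: you open by setting $q=\bv S$ and saying ``since $\bv S$ exists''. The existence of $\bv S$ is not a hypothesis of the lemma; it is part of the conclusion $p=\bv S$. This is not a pedantic point, because the lemma is invoked later (in lemma \ref{L:equal} and in the step (\ref{T''})$\implies$(\ref{D}) of theorem \ref{T:gen}) for sets of the form $\pd\cap S^\downarrow$ or $\xd\cap S^\downarrow$ with no prior guarantee that such a set has a join at all, and it is exactly this existence statement that then feeds into part (2) to conclude $\pd\cap S^\downarrow\in\cU^+$. The repair is immediate and uses only your own key step: let $q$ be an \emph{arbitrary} upper bound of $S$. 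Then $q^\downarrow$ is a $\cU$-ideal containing $S$, so $\Gamma_\cU(S)\subseteq q^\downarrow$, giving $p\leq q$. Since $p$ is itself an upper bound of $S$ and lies below every upper bound of $S$, the join $\bv S$ exists and equals $p$. With that rewording, part (1) is complete and your overall approach is the expected one.
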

\begin{proof}
This is a minor variation of \cite[lemma 2.18]{Egr17b}.
\end{proof}

\begin{lemma}\label{L:3.3}
$\Upsilon_\cU(S)\subseteq \Gamma_{\cU}(S)$ for all $S\subseteq P$.
\end{lemma}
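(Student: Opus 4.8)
The plan is to prove the inclusion $\Upsilon_\cU(S)\subseteq \Gamma_\cU(S)$ by transfinite induction on the ordinal $\alpha$, showing $\Upsilon_\cU^\alpha(S)\subseteq \Gamma_\cU(S)$ for all $\alpha\leq\hat\sigma$, and then specializing to $\alpha=\hat\sigma$. The key observation that makes this work is that $\Gamma_\cU(S)$ is itself a $\cU$-ideal (by Proposition \ref{P:trans}), hence down-closed and closed under joins of sets from $\cU$; moreover, by part (2) of Lemma \ref{L:fix}, if $T\in\cU^+$ and $\bv T$ exists with all of $T$ inside $\Gamma_\cU(S)$, then $\bv T\in\Gamma_\cU(S)$ as well --- this is the crucial point, since $\Upsilon_\cU$ closes under joins of sets from the \emph{larger} family $\cU^+$ rather than just $\cU$.

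The base case $\alpha=0$ is immediate: $\Upsilon_\cU^0(S)=S^\downarrow\subseteq\Gamma_\cU(S)$ since $\Gamma_\cU(S)$ is a downset containing $S$. For the successor step, assume $\Upsilon_\cU^\alpha(S)\subseteq\Gamma_\cU(S)$. An element of $\Upsilon_\cU^{\alpha+1}(S)$ is of the form $\bv T$ for some $T\in\cU^+$ with $T\subseteq\Upsilon_\cU^\alpha(S)\subseteq\Gamma_\cU(S)$. Here I need to know that $\bv T\in\Gamma_\cU(S)$: since $T\in\cU^+=\cU_{\Gamma_\cU}$, by Definition \ref{D:UG} we have $\bv T\in\Gamma_\cU(T)$, and since $T\subseteq\Gamma_\cU(S)$ and $\Gamma_\cU$ is monotone and idempotent, $\Gamma_\cU(T)\subseteq\Gamma_\cU(\Gamma_\cU(S))=\Gamma_\cU(S)$, so $\bv T\in\Gamma_\cU(S)$. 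Hence $\Upsilon_\cU^{\alpha+1}(S)\subseteq\Gamma_\cU(S)$. The limit step is trivial: $\Upsilon_\cU^\lambda(S)=\bigcup_{\beta<\lambda}\Upsilon_\cU^\beta(S)\subseteq\Gamma_\cU(S)$ by the induction hypothesis applied to each $\beta<\lambda$.

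I do not anticipate a serious obstacle here; the argument is a routine transfinite induction. The one point that requires care --- and is really the heart of the matter --- is the successor step, where one must use precisely the defining property of $\cU^+=\cU_{\Gamma_\cU}$ (Definition \ref{D:UG}), namely that every set in $\cU^+$ has its join landing in $\Gamma_\cU$ of that set, together with idempotence of $\Gamma_\cU$. A minor subtlety is that Definition \ref{D:U} of $\Upsilon_\cU^{\alpha+1}$ (as written) does not explicitly close downward, but this only makes $\Upsilon_\cU^{\alpha+1}(S)$ smaller, so the inclusion is unaffected; in any case $\Gamma_\cU(S)$ being a downset absorbs any such closure. Taking $\alpha=\hat\sigma$ gives $\Upsilon_\cU(S)=\Upsilon_\cU^{\hat\sigma}(S)\subseteq\Gamma_\cU(S)$, as required.
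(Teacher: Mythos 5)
Your proof is correct. The only blemish is in your opening paragraph: the fact you need there is not Lemma \ref{L:fix}(2), which goes in the opposite direction ($\bv T\in\Gamma_\cU(T)\implies T\in\cU^+$); what you actually use, correctly, in the successor step is Definition \ref{D:UG} itself, namely that $T\in\cU^+=\cU_{\Gamma_\cU}$ means $\bv T\in\Gamma_\cU(T)$, combined with monotonicity and idempotence of $\Gamma_\cU$. With that citation fixed, the argument is complete. Your route differs mildly from the paper's: the paper first invokes Proposition \ref{P:adj} to get $\Gamma_\cU=\Gamma_{\cU^+}$ and then observes that, by definition, the $\Upsilon_\cU$ recursion is stage-by-stage contained in the $\Gamma_{\cU^+}$ recursion (the latter being the same construction with the extra downward closure), so the inclusion is immediate; you instead fix the target $\Gamma_\cU(S)$ and show by transfinite induction that it absorbs every stage of the $\Upsilon_\cU$ recursion, using only the definition of $\cU_{\Gamma_\cU}$ and the closure-operator properties of $\Gamma_\cU$ from Proposition \ref{P:trans}. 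The paper's version is shorter because it delegates the work to the Galois connection identity $\F\circ\G\circ\F=\F$; yours is more self-contained, makes the role of $\cU^+$ explicit, and avoids comparing two transfinite constructions, at the cost of spelling out an induction the paper leaves implicit. Your remark about the absence of downward closure in Definition \ref{D:U} is also correct: it only shrinks the $\Upsilon$ stages, and in any case $\Gamma_\cU(S)$ is a downset.
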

\begin{proof}
This is \cite[lemma 3.3]{Egr17b}. The proof here is essentially the same, but so short that we repeat it for convenience.
By proposition \ref{P:adj} we have $\Gamma_\cU = \Gamma_{\cU^+}$, and by definition of $\Upsilon_\cU$ we have $\Upsilon_\cU(S)\subseteq \Gamma_{\cU^+}(S)$ for all $S\subseteq P$.
\end{proof}

\begin{lemma}\label{L:cap}
If $x\in P$ and $S\subseteq P$, then $x\in \Upsilon_\cU(S)\iff x\in \Upsilon_\cU(\xd\cap S^\downarrow)$.
\end{lemma}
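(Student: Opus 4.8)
The plan is to prove the direction $\Longleftarrow$ by monotonicity and the direction $\Longrightarrow$ by transfinite induction on the stages of $\Upsilon_\cU$. First I would record two routine facts that are immediate from the recursive definition of $\Upsilon_\cU$. The map is \emph{monotone} in the sense that $A\subseteq B$ implies $\Upsilon_\cU^\alpha(A)\subseteq\Upsilon_\cU^\alpha(B)$ for every ordinal $\alpha$: by transfinite induction, the base case is $A^\downarrow\subseteq B^\downarrow$, the successor case follows because enlarging $\Upsilon_\cU^\alpha(\cdot)$ only adds admissible sets $T\in\cU^+$, and the limit case is a union of inclusions. Also $\Upsilon_\cU(S)=\Upsilon_\cU(S^\downarrow)$, since $\Upsilon_\cU^0(S)=S^\downarrow=(S^\downarrow)^\downarrow=\Upsilon_\cU^0(S^\downarrow)$ and every later stage depends only on the previous one. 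Finally, the radius $\sigma$ of $\cU$ does not depend on the starting set, so $\hat\sigma$ is the stage at which $\Upsilon_\cU$ is evaluated for both $S$ and $\xd\cap S^\downarrow$.

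The direction $\Longleftarrow$ is then a one-line consequence: $\xd\cap S^\downarrow\subseteq S^\downarrow$, so by monotonicity and the second fact $\Upsilon_\cU(\xd\cap S^\downarrow)\subseteq\Upsilon_\cU(S^\downarrow)=\Upsilon_\cU(S)$, whence $x\in\Upsilon_\cU(\xd\cap S^\downarrow)$ gives $x\in\Upsilon_\cU(S)$.

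For the direction $\Longrightarrow$, I would prove by transfinite induction on $\alpha$ the slightly stronger statement: for every $y\in P$ with $y\leq x$, if $y\in\Upsilon_\cU^\alpha(S)$ then $y\in\Upsilon_\cU^\alpha(\xd\cap S^\downarrow)$. The base case $\alpha=0$ is immediate, since $y\leq x$ and $y\in S^\downarrow$ give $y\in\xd\cap S^\downarrow=\Upsilon_\cU^0(\xd\cap S^\downarrow)$, and the limit case is trivial because $\Upsilon_\cU$ takes unions at limit stages. In the successor case, suppose $y\leq x$ and $y\in\Upsilon_\cU^{\alpha+1}(S)$, so $y=\bv T$ for some $T\in\cU^+$ with $T\subseteq\Upsilon_\cU^\alpha(S)$. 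Every $t\in T$ satisfies $t\leq\bv T=y\leq x$, so the induction hypothesis applies to each $t\in T$ and yields $T\subseteq\Upsilon_\cU^\alpha(\xd\cap S^\downarrow)$; since $T\in\cU^+$, its join exists and $y=\bv T\in\Upsilon_\cU^{\alpha+1}(\xd\cap S^\downarrow)$. Taking $\alpha=\hat\sigma$ and $y=x$ (using $x\leq x$) then gives $x\in\Upsilon_\cU(S)\implies x\in\Upsilon_\cU(\xd\cap S^\downarrow)$, completing the proof.

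I do not expect a genuine obstacle here; the one point that needs care is that the successor step of $\Upsilon_\cU$ does \emph{not} take a downward closure, so one cannot simply invoke closure-operator properties. The observation that makes the localization to $\xd$ work is the elementary fact that all elements of $T$ lie below $\bv T$, which keeps the induction hypothesis applicable at the successor stage.
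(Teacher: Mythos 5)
Your proof is correct and follows essentially the same route as the paper: a transfinite induction on the stage $\alpha$, where the key point is that every $t\in T$ lies below $\bv T\leq x$, so the induction hypothesis can be applied to the elements of $T$. Your stage-matched strengthening (keeping the same ordinal $\alpha$ on both sides, quantified over all $y\leq x$) is a mild refinement of the paper's version, which instead applies the hypothesis to each $t$ with $t^\downarrow\cap S^\downarrow$ and the full $\Upsilon_\cU$ on the right-hand side and then invokes monotonicity.
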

\begin{proof}
The implication from right to left is obvious. For the other implication we suppose that $x\in \Upsilon_\cU^\alpha(S)$
 and proceed by induction on $\alpha$. In the base case, $\alpha=0$, and the result is trivial. The inductive step
 where $\alpha$ is a limit ordinal is also trivial, so we assume that $\alpha=\beta+1$ for some $\beta$. Since $x\in
 \Upsilon_\cU^\alpha (S)$ it follows from the definition of $\Upsilon_\cU$ that $x=\bv T$ for some $T\subseteq
 \Upsilon_\cU^\beta (S)$ with $T\in \cU^+$. By the inductive hypothesis, for all $t\in T$ we have $t\in \Upsilon_\cU(t^\downarrow\cap
 S^\downarrow)$, and therefore $t\in \Upsilon_\cU(\xd\cap S^\downarrow)$. It follows that $x\in
 \Upsilon_\cU(\xd\cap S^\downarrow)$ as required.
\end{proof}

\begin{lemma}\label{L:equal}
If $S\subseteq P$, then $\Upsilon_\cU(S)=\Upsilon_\cU^1(S)$.
\end{lemma}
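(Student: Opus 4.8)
The plan is to show that after one step of the $\Upsilon_\cU$ process the set is already $\cU^+$-closed (and downward closed), so no further steps change it; combined with Lemma~\ref{L:3.3} and the fact that $\Upsilon_\cU^1(S)\subseteq\Upsilon_\cU(S)$ always, this gives equality. Fix $S\subseteq P$. First I would record the easy inclusion $\Upsilon_\cU^1(S)\subseteq\Upsilon_\cU(S)$, which is immediate from the definition of the transfinite iteration. For the reverse inclusion it suffices to prove $\Upsilon_\cU^{\alpha}(S)\subseteq\Upsilon_\cU^1(S)$ for all $\alpha$, which I would do by transfinite induction; the base case $\alpha=0$ holds since $\Upsilon_\cU^1(S)$ is defined as a set of joins $\bv T$ with $T\subseteq S^\downarrow$ and $T\in\cU^+$, and singletons $\{p\}$ with $p\in S^\downarrow$ lie in $\cU^+$ (indeed $B_P^+\subseteq\cU^+$), so $S^\downarrow\subseteq\Upsilon_\cU^1(S)$. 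Limit stages are trivial.

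The crux is the successor step: assuming $\Upsilon_\cU^\beta(S)\subseteq\Upsilon_\cU^1(S)$, I must show $\Upsilon_\cU^{\beta+1}(S)\subseteq\Upsilon_\cU^1(S)$, i.e. that $\Upsilon_\cU^1(S)$ is closed under forming joins $\bv T$ with $T\in\cU^+$ and $T\subseteq\Upsilon_\cU^1(S)$. So take such a $T$; each $t\in T$ equals $\bv T_t$ for some $T_t\in\cU^+$ with $T_t\subseteq S^\downarrow$. Let $x=\bv T$. The idea is to produce a single set $T^*\subseteq S^\downarrow$ with $T^*\in\cU^+$ and $\bv T^*=x$; then $x\in\Upsilon_\cU^1(S)$. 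The natural candidate is $T^*=\bigcup_{t\in T}T_t$, or perhaps its downward closure inside $S^\downarrow$. One checks $\bv T^*=\bv_{t\in T}\bv T_t=\bv T=x$ using associativity of joins (all the relevant joins exist). The remaining point — membership $T^*\in\cU^+$ — is where the real work lies, and here I would invoke the characterization of $\cU^+=\cU_{\Gamma_\cU}$ from Definitions~\ref{D:UG} and~\ref{D:plus}: a set lies in $\cU^+$ iff its join exists and lies in its own $\Gamma_\cU$-closure. So I need $x=\bv T^*\in\Gamma_\cU(T^*)$. Since $T\in\cU^+$ we have $x\in\Gamma_\cU(T)$, and since each $T_t\in\cU^+$ we have $t=\bv T_t\in\Gamma_\cU(T_t)\subseteq\Gamma_\cU(T^*)$; hence $T\subseteq\Gamma_\cU(T^*)$, so $\Gamma_\cU(T)\subseteq\Gamma_\cU(T^*)$ by monotonicity and idempotence, giving $x\in\Gamma_\cU(T^*)$ as wanted. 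Thus $T^*\in\cU^+$, $T^*\subseteq S^\downarrow$, $\bv T^*=x$, and therefore $x\in\Upsilon_\cU^1(S)$.

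Finally I would also need $\Upsilon_\cU^1(S)$ to be downward closed so that the base case and the structure of the argument are consistent; if $\Upsilon_\cU^1(S)$ as literally defined (a set of joins, without an extra downward closure) is not obviously a downset, I would either observe that for $q\le x=\bv T^*$ one has $q=\bv(T^*\cup\{q\})$ with $T^*\cup\{q\}\in\cU^+$ (its max is $q$... no, its join is $x$; instead use $q^\downarrow\cap T^*$ together with $\{q\}$, or more simply note $\{q\}\in B_P^+\subseteq\cU^+$ with $q\in S^\downarrow$ whenever $q\in S^\downarrow$), handling the general downset-closure via Lemma~\ref{L:cap}. I expect the main obstacle to be exactly this bookkeeping about downward closure interacting with the definition of $\Upsilon_\cU^1$, together with verifying carefully that the union $T^*=\bigcup_t T_t$ genuinely lands in $\cU^+$ rather than merely in some larger closure; the argument above via $\Gamma_\cU(T)\subseteq\Gamma_\cU(T^*)$ is the key insight that makes it go through, and it uses idempotence of $\Gamma_\cU$ (Proposition~\ref{P:trans}) essentially.
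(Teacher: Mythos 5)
Your proof is correct, but it takes a genuinely different route from the paper's. The paper argues pointwise: given $x\in\Upsilon_\cU(S)$, it invokes Lemma \ref{L:cap} to get $x\in\Upsilon_\cU(\xd\cap S^\downarrow)$, then Lemma \ref{L:3.3} and Lemma \ref{L:fix} to conclude $x=\bv(\xd\cap S^\downarrow)$ and $\xd\cap S^\downarrow\in\cU^+$, so the \emph{canonical} witness $\xd\cap S^\downarrow$ does the job in three lines. You instead run a transfinite induction showing $\Upsilon_\cU^\alpha(S)\subseteq\Upsilon_\cU^1(S)$, and at the successor step you flatten the witnessing families: given $T\in\cU^+$ with each $t=\bv T_t$, $T_t\in\cU^+$, $T_t\subseteq S^\downarrow$, you take $T^*=\bigcup_t T_t$, check $\bv T^*=\bv T$, and verify $T^*\in\cU^+$ directly from Definition \ref{D:UG} via the closure computation $T\subseteq\Gamma_\cU(T^*)\Rightarrow\Gamma_\cU(T)\subseteq\Gamma_\cU(T^*)\ni\bv T$. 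That computation is sound, as is the base case via singletons, so your argument is complete; it has the merit of being essentially self-contained (it bypasses Lemmas \ref{L:cap} and \ref{L:fix}, the latter being quoted from elsewhere), at the cost of being longer, and it produces a non-canonical witness where the paper's proof shows every $x\in\Upsilon_\cU(S)$ is in fact the join of $\xd\cap S^\downarrow$. One correction: your closing worry about needing $\Upsilon_\cU^1(S)$ to be down-closed is a red herring. Nothing in your induction uses it --- the base case is already covered by singletons, and the successor and limit steps only quantify over $T\subseteq\Upsilon_\cU^\beta(S)$ --- which is fortunate, since $\Upsilon_\cU^1(S)$ is generally \emph{not} down-closed: by Theorem \ref{T:gen} its down-closure for all $S\in\cU$ is precisely equivalent to $\cI_\cU$ being a frame, so that auxiliary claim would be unprovable in general; simply delete that paragraph.
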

\begin{proof}
We need only show that $\Upsilon_\cU(S)\subseteq \Upsilon_\cU^1(S)$ as the other inclusion is automatic. Let $x\in
 \Upsilon_\cU(S)$. Then by lemmas \ref{L:cap} and \ref{L:3.3} we know $x\in \Upsilon_\cU(\xd\cap S^\downarrow)\subseteq
 \Gamma_\cU(\xd\cap S^\downarrow)$, and so $x=\bv (\xd\cap S^\downarrow)$ by lemma \ref{L:fix}(1). It follows that $\xd\cap
 S^\downarrow\in \cU^+$ by lemma \ref{L:fix}(2), and thus $x\in \Upsilon_\cU^1(S)$ as required.
\end{proof}

In light of lemma \ref{L:equal} we can override definition \ref{D:U}.

\begin{defn}\label{D:Unew}
\[\Upsilon_\cU(S) = \{\bv T : T\in \cU^+ \text{ and } T\subseteq S^\downarrow\} \text{ for all } S\subseteq P.\]
\end{defn}

\begin{lemma}\label{L:down}
Let $\cU$ be a join-specification for $P$, and let $S\subseteq P$. Then $\Upsilon_\cU(S)$ is the smallest $\cU$-ideal containing $S$ if and only if it is down-closed.
\end{lemma}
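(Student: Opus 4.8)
The plan is to prove the two directions separately, with the substantive content being the "only if" direction (if $\Upsilon_\cU(S)$ is the smallest $\cU$-ideal containing $S$ then it is down-closed), which is in fact immediate since every $\cU$-ideal is by definition down-closed. So the real work is the "if" direction: assuming $\Upsilon_\cU(S)$ is down-closed, show it equals $\Gamma_\cU(S)$, the smallest $\cU$-ideal containing $S$. First I would record the easy inclusion $S \subseteq \Upsilon_\cU(S) \subseteq \Gamma_\cU(S)$: the left inclusion holds because every $s \in S$ is the join of the singleton $\{s\} \in B_P \subseteq \cU^+$ with $\{s\} \subseteq S^\downarrow$ (using definition \ref{D:Unew} and that $B_P^+ \subseteq \cU^+$), and the right inclusion is lemma \ref{L:3.3} together with lemma \ref{L:equal}, which identify $\Upsilon_\cU$ with the single-step process.

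For the reverse inclusion $\Gamma_\cU(S) \subseteq \Upsilon_\cU(S)$, the strategy is to show that $\Upsilon_\cU(S)$ is itself a $\cU$-ideal; then, since it contains $S$ and $\Gamma_\cU(S)$ is the \emph{smallest} such by proposition \ref{P:trans}, we get $\Gamma_\cU(S) \subseteq \Upsilon_\cU(S)$ and hence equality, which gives that $\Upsilon_\cU(S)$ is the smallest $\cU$-ideal containing $S$. To check $\Upsilon_\cU(S)$ is a $\cU$-ideal I need two closure properties. Down-closure is exactly the hypothesis. For closure under $\cU$-joins: suppose $T \in \cU$ with $T \subseteq \Upsilon_\cU(S)$; I must show $\bv T \in \Upsilon_\cU(S)$, i.e. by definition \ref{D:Unew} that there is some $T' \in \cU^+$ with $T' \subseteq S^\downarrow$ and $\bv T' = \bv T$. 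The natural candidate is $T' = (\bv T)^\downarrow \cap S^\downarrow$. To see $\bv T' = \bv T$: each $t \in T$ lies in $\Upsilon_\cU(S)$, so $t = \bv T_t$ for some $T_t \in \cU^+$ with $T_t \subseteq S^\downarrow$; moreover each such $t$ is an upper bound of $T_t$ lying in $S^\downarrow$ (since $\Upsilon_\cU(S)$ is down-closed and $t \le \bv T$... wait, more carefully: $T_t \subseteq S^\downarrow$ and $t = \bv T_t \le \bv T$, so every element of $T_t$ is in $(\bv T)^\downarrow \cap S^\downarrow = T'$), hence $t = \bv T_t \le \bv T'$; thus $\bv T \le \bv T'$, and the reverse is clear since $T' \subseteq (\bv T)^\downarrow$. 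So $\bv T' = \bv T$ exists. Finally $T' \in \cU^+$: here I invoke lemma \ref{L:fix}(2), since $\bv T' = \bv T = \bv\bigl((\bv T)^\downarrow \cap S^\downarrow\bigr)$ and, because $\Upsilon_\cU(S) \subseteq \Gamma_\cU(S)$ is down-closed and contains each $T_t$'s join, one checks $\bv T' \in \Gamma_\cU(T')$ — concretely, $\bv T \in \Upsilon_\cU(S) \subseteq \Gamma_\cU(S)$ and a localization argument via lemma \ref{L:cap}/\ref{L:fix} shows $\bv T \in \Gamma_\cU(T')$, so $T' \in \cU^+$.

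The step I expect to be the main obstacle is verifying cleanly that $T' = (\bv T)^\downarrow \cap S^\downarrow$ lies in $\cU^+$, i.e. pinning down the precise chain of inclusions showing $\bv T \in \Gamma_\cU\bigl((\bv T)^\downarrow \cap S^\downarrow\bigr)$ so that lemma \ref{L:fix}(2) applies; this is where the down-closure hypothesis on $\Upsilon_\cU(S)$ is genuinely used, rather than just being the trivial half of the biconditional. Once that is in place, assembling "$\Upsilon_\cU(S)$ is a $\cU$-ideal containing $S$, hence contains $\Gamma_\cU(S)$, hence equals it" is routine.
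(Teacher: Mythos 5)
Your overall route is sound and organized genuinely differently from the paper's. You argue the direct implication: if $\Upsilon_\cU(S)$ is down-closed then it is a $\cU$-ideal containing $S$, hence contains the smallest such ideal $\Gamma_\cU(S)$ (proposition \ref{P:trans}), and equality follows from lemma \ref{L:3.3}. The paper instead proves the contrapositive, taking the least $\alpha$ with $\Upsilon_\cU(S)\subset\Gamma^\alpha_\cU(S)$, extracting $T\in\cU$ with $T\subseteq\Upsilon_\cU(S)$ and $p\leq\bv T$, $p\notin\Upsilon_\cU(S)$, and concluding in one line from lemma \ref{L:equal} that $\bv T\in\Upsilon_\cU(S)$, so down-closure fails. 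The shared mathematical core is that $\Upsilon_\cU(S)$ is closed under $\cU$-joins: the paper gets this for free from lemma \ref{L:equal} (a second application of the one-step operator adds nothing), whereas you re-derive it by hand; your second paragraph could be replaced entirely by a citation of that lemma.

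Two corrections to the step you flag as the obstacle. First, as written your justification is circular: you invoke ``$\bv T\in\Upsilon_\cU(S)$'', which is exactly what is being proved, and the hinted localization of $\Gamma_\cU$ in the style of lemma \ref{L:cap} is not available in general --- for $\Gamma_\cU$ that localization property is one of the conditions equivalent to $\cI_\cU$ being a frame (theorem \ref{T:gen}), so it cannot be assumed here. The step does close, but by a different route: for each $t\in T$ pick $T_t\in\cU^+$ with $T_t\subseteq S^\downarrow$ and $\bv T_t=t$; then $T_t\subseteq t^\downarrow\cap S^\downarrow\subseteq T'$, and $t=\bv T_t\in\Gamma_\cU(T_t)\subseteq\Gamma_\cU(T')$ by definition \ref{D:UG}, so $T\subseteq\Gamma_\cU(T')$; since $\Gamma_\cU(T')$ is a $\cU$-ideal and $T\in\cU$, we get $\bv T'=\bv T\in\Gamma_\cU(T')$, and lemma \ref{L:fix}(2) gives $T'\in\cU^+$. (For $\bv T'=\bv T$, argue via upper bounds: $\bv T$ bounds $T'$, and any upper bound of $T'$ bounds each $T_t$, hence each $t$, hence $\bv T$; your phrasing assumes $\bv T'$ exists before establishing it.) Second, note that this argument nowhere uses the down-closure hypothesis: closure of $\Upsilon_\cU(S)$ under $\cU$-joins holds unconditionally (it is precisely lemma \ref{L:equal}), and the hypothesis enters only to supply the remaining clause of being a $\cU$-ideal, contrary to your closing remark about where it is ``genuinely used''.
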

\begin{proof}
By definition, $\cU$-ideals must be down-closed. For the converse, we prove the contrapositive. Suppose $S\in \cU$ and that $\Upsilon_\cU(S)$ is not the smallest $\cU$-ideal containing $S$. By lemma \ref{L:3.3}, this is equivalent to saying that $\Upsilon_\cU(S)\subset\Gamma_\cU(S)$. Recall definition \ref{D:G}, and let $\alpha$ be the smallest
 ordinal with $\Upsilon_\cU(S)\subset \Gamma_\cU^{\alpha}(S)$. Let $p\in \Gamma_\cU^{\alpha}(S)\setminus
 \Upsilon_\cU(S)$. Observing that $\alpha$ must be a successor ordinal, there must be $T\subseteq
 \Gamma_\cU^{\alpha-1}(S)\subseteq\Upsilon_\cU(S)$ with $T\in\cU$ and $p\leq\bv T$. By lemma \ref{L:equal} we must have $\bv
 T\in \Upsilon_\cU(S)$, and thus $\Upsilon_\cU(S)$ is not down-closed.
\end{proof}

Definition \ref{D:des} below generalizes \cite[definition 2.2]{HicMon84}, where it is used as the basis of a definition of distributivity.
\begin{defn}\label{D:des}
If $\cU$ is a join-specification for $P$, then $P$ has the $\cU$-\textup{\textbf{descent property}} if, for every down-closed $S\in \cU$ and $p\leq \bv S$, there is $T\in\cU^+$ with $T\subseteq S^\downarrow$ and $p=\bv T$.
\end{defn}

\begin{thm}\label{T:gen}
Given a join-specification $\cU$ for $P$, the following are equivalent:
\begin{enumerate}[(1)]
\item $\cI_\cU$ is a frame.\label{f}
\item For all join-specifications $\cV$ such that $\Gamma_\cV=\Gamma_\cU$, and for all $p\in P$ and $S\in\cV$, if $p\leq \bv S$, then $\pd\cap \Gamma_\cU(S) = \Gamma_\cU(\pd\cap S^\downarrow)$. \label{T}
\item For all $p\in P$ and $S\in\cU^+$, if $p\leq \bv S$, then $\pd\cap \Gamma_\cU(S) = \Gamma_\cU(\pd\cap S^\downarrow)$.\label{T'}
\item For all $p\in P$ and $S\in\cU$, if $p\leq \bv S$, then
$\pd\cap \Gamma_\cU(S) = \Gamma_\cU(\pd\cap S^\downarrow)$. \label{T''}
\item $\Upsilon_\cU(S)$ is down-closed for all $S\in\cU$.\label{D}
\item For all $S\in \cU$, the smallest $\cU$-ideal containing $S$ is $\Upsilon_\cU(S)$. \label{U}
\item For all $S\subseteq P$, the smallest $\cU$-ideal containing $S$ is $\Upsilon_\cU(S)$.\label{i}
\item $\cI_\cU$ is $(\omega,\sigma)$-distributive (where $\sigma$ is the radius of $\cU$).\label{d}
\item Whenever $\cU'$ is a join-specification with $\cU'\subseteq \cU$ there is a unique frame morphism $\phi:\cI_{\cU'}\to \cI_{\cU}$ that fixes $P$ with respect to the canonical embeddings.
\label{h}
\item $P$ has the $\cU$-descent property. \label{des}
\end{enumerate}
\end{thm}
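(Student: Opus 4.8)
The plan is to prove Theorem~\ref{T:gen} by establishing a cycle of implications that passes through all ten conditions, exploiting the fact that several of them are nearly restatements of each other once the preparatory lemmas of this section are in hand. Concretely I would aim for the chain
\[\eqref{f}\Rightarrow\eqref{h}\Rightarrow\eqref{T}\Rightarrow\eqref{T'}\Rightarrow\eqref{T''}\Rightarrow\eqref{D}\Rightarrow\eqref{U}\Rightarrow\eqref{i}\Rightarrow\eqref{d}\Rightarrow\eqref{f},\]
with \eqref{des} slotted in as equivalent to \eqref{D} (or \eqref{U}) via Definition~\ref{D:des} and Lemma~\ref{L:down}. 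Many of the arrows are cheap: \eqref{T}$\Rightarrow$\eqref{T'} is immediate since $\cU^+$ is itself a join-specification with $\Gamma_{\cU^+}=\Gamma_\cU$ by Proposition~\ref{P:adj}; \eqref{T'}$\Rightarrow$\eqref{T''} is immediate from $\cU\subseteq\cU^+$ and $\Gamma_\cU=\Gamma_{\cU^+}$; and \eqref{U}$\Rightarrow$\eqref{i} should follow by the single-step reduction, since Lemma~\ref{L:cap} plus Lemma~\ref{L:fix} let us replace an arbitrary $S$ by the family of sets $\xd\cap S^\downarrow$ exactly as in the proof of Lemma~\ref{L:equal}.

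For the substantive arrows I would argue as follows. For \eqref{f}$\Rightarrow$\eqref{h}: given $\cU'\subseteq\cU$ we have $\Gamma_{\cU'}\leq\Gamma_\cU$ (monotonicity of $\Gamma_{(-)}$, which is the functor $\F$ of Proposition~\ref{P:adj}), so Proposition~\ref{P:arrow} supplies a unique completely join-preserving $\phi$ fixing $P$, and when $\cI_\cU=L_{\Gamma_\cU}$ is a frame the same proposition tells us $\phi$ preserves finite meets, hence is a frame morphism. For \eqref{h}$\Rightarrow$\eqref{T}: apply \eqref{h} with $\cU'=\cV$ (note $\cV\subseteq\cU^+$ since $\Gamma_\cV=\Gamma_\cU$ forces $\cV\subseteq\cU_{\Gamma_\cU}=\cU^+$, but more simply one can take $\cU'=\cV\cap\cU$ or observe directly that $\Gamma_\cV=\Gamma_\cU$); the resulting frame morphism $\phi=\Gamma_\cU|_{\cI_\cV}$ preserves the binary meet $\Gamma_\cU(\pd)\wedge\Gamma_\cV(S)$, and since $p\leq\bv S$ in $\cI_\cV$ we get $\pd=\pd\cap\bv S$ there, which $\phi$ carries to the desired identity in $\cI_\cU$ — this is essentially the computation in the ``only if'' half of Proposition~\ref{P:arrow}, specialized. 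The heart is \eqref{T''}$\Rightarrow$\eqref{D}: if $S\in\cU$ and $p\in\Upsilon_\cU(S)^\downarrow$, then $p\leq\bv T$ for some $T\in\cU^+$, $T\subseteq S^\downarrow$; one shows $p\in\Gamma_\cU(\pd\cap S^\downarrow)=\pd\cap\Gamma_\cU(S)$ by \eqref{T''} (using $p\leq\bv S$), and then Lemma~\ref{L:fix}(1),(2) together with Lemma~\ref{L:equal}/Definition~\ref{D:Unew} give $p=\bv(\pd\cap S^\downarrow)\in\Upsilon_\cU^1(S)=\Upsilon_\cU(S)$, so $\Upsilon_\cU(S)$ is down-closed. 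Then \eqref{D}$\Leftrightarrow$\eqref{U} is exactly Lemma~\ref{L:down}, and \eqref{D}$\Leftrightarrow$\eqref{des} is a direct unwinding of Definition~\ref{D:des}.

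To close the loop I would do \eqref{i}$\Rightarrow$\eqref{d}: when $\Gamma_\cU=\Upsilon_\cU$, a direct computation shows $\cI_\cU$ is $(\omega,\sigma)$-distributive. Given $\pd\cap\bv_{j\in J}\Gamma_\cU(S_j)$ with $J$ finite and each $|S_j|<\sigma$, expand $\bv_{j}\Gamma_\cU(S_j)=\Gamma_\cU(\bigcup_j S_j)$ and use Definition~\ref{D:Unew}: an element below it is $\bv T$ for $T\in\cU^+$, $T\subseteq(\bigcup_j S_j)^\downarrow$; intersecting with $\pd$ and reassociating over the finite index set realizes the element as a join of elements each lying in $\pd\cap\bigcap$ of the component ideals, which is the required distributive identity. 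Then \eqref{d}$\Rightarrow$\eqref{f} is trivial for $J=\{1,2\}$ since $(\omega,\sigma)$-distributivity with just two terms already gives the frame law $x\wedge\bv Y=\bv_{y}(x\wedge y)$ when combined with completeness — actually one needs the full finite form, which \eqref{d} provides, and the frame law for arbitrary $Y$ follows by writing $\bv Y$ as a join over the $\cU$-ideals $\bigcup\{\,y^\downarrow: y\in Y\,\}$ and using $|\{\ast\}|<\sigma$ for the inner index. The main obstacle I anticipate is getting the bookkeeping in \eqref{T''}$\Rightarrow$\eqref{D} and in \eqref{i}$\Rightarrow$\eqref{d} exactly right — in particular making sure the sets whose joins appear genuinely land in $\cU^+$ (this is where Lemma~\ref{L:fix}(2) is indispensable) and that the cardinality constraints from the radius $\sigma$ are respected when one merges finitely many index sets; everything else is routine given the lemmas already proved.
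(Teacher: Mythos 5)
There are two genuine gaps, both in steps you classify as routine. First, your \eqref{U}$\Rightarrow$\eqref{i}. You claim this ``should follow by the single-step reduction, since Lemma~\ref{L:cap} plus Lemma~\ref{L:fix} let us replace an arbitrary $S$ by $\xd\cap S^\downarrow$ exactly as in the proof of Lemma~\ref{L:equal}.'' But Lemma~\ref{L:cap} is a statement about $\Upsilon_\cU$, and the analogous statement for $\Gamma_\cU$ (namely $x\in\Gamma_\cU(S)\Rightarrow x\in\Gamma_\cU(\xd\cap S^\downarrow)$) is essentially the frame condition itself and is false in general. Lemma~\ref{L:equal} only shows the transfinite $\Upsilon_\cU$ collapses to $\Upsilon_\cU^1$; it says nothing about $\Gamma_\cU(S)\subseteq\Upsilon_\cU(S)$. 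Crucially, your sketch never uses hypothesis \eqref{U}; if the step went through as described, every join-specification would satisfy \eqref{i} and hence be frame-generating, contradicting Example~\ref{E:strict}. The actual argument (as in the paper) is a transfinite induction on the stages $\Gamma_\cU^\alpha(S)$: at the least $\alpha$ where $\Gamma_\cU^\alpha(S)$ escapes $\Upsilon_\cU(S)$ one has $p\leq\bv T$ with $T\in\cU$ and $T\subseteq\Upsilon_\cU(S)$; hypothesis \eqref{U} applied to $T$ yields $T'\in\cU^+$, $T'\subseteq T^\downarrow$ with $\bv T'=p$, and minimality of $\alpha$ plus down-closedness of $\Gamma_\cU^{\alpha-1}(S)$ puts $T'\subseteq\Upsilon_\cU(S)$, so $p\in\Upsilon_\cU^2(S)=\Upsilon_\cU(S)$ by Lemma~\ref{L:equal}, a contradiction.

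Second, your return edge to \eqref{f} does not close. You route it through \eqref{d}$\Rightarrow$\eqref{f} and call it essentially trivial, arguing that the frame law for arbitrary $Y$ ``follows by writing $\bv Y$ as a join over $\bigcup\{y^\downarrow:y\in Y\}$ and using $|\{\ast\}|<\sigma$ for the inner index.'' This does not work: $(\omega,\sigma)$-distributivity only controls inner joins of fewer than $\sigma$ elements, and rewriting $\bv Y$ as a join of principal ideals does not reduce its cardinality, so nothing bounds the inner index by $\sigma$. The paper uses \eqref{d} in the opposite way: $\neg$\eqref{T''}$\Rightarrow\neg$\eqref{d}, which is legitimate precisely because the witnessing $S$ lies in $\cU$ and hence has size below the radius $\sigma$; the return to \eqref{f} is then through the substantive chain \eqref{T''}$\Rightarrow$\eqref{D}$\Rightarrow$\eqref{U}$\Rightarrow$\eqref{i}$\Rightarrow$\eqref{f}. (One can salvage a direct \eqref{d}$\Rightarrow$\eqref{f} by transfinite induction along $\Gamma_\cU^\alpha$, applying distributivity only to sets $T\in\cU$ at successor stages, but that is essentially redoing the hard work, not the rewriting you describe.) The simplest repair of your plan is to prove \eqref{i}$\Rightarrow$\eqref{f} directly, as in the paper: if $x\in D\cap\Upsilon_\cU(\bigcup_I C_i)$ then $x=\bv S$ for some $S\in\cU^+$ with $S\subseteq\bigcup_I C_i$, and $S\subseteq\xd\subseteq D$, so $x\in\Upsilon_\cU(\bigcup_I(D\cap C_i))$; then \eqref{f}$\Rightarrow$\eqref{d} is immediate. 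A smaller point: in \eqref{h}$\Rightarrow$\eqref{T} you cannot take $\cU'=\cV$, since \eqref{h} requires $\cU'\subseteq\cU$ and only $\cV\subseteq\cU^+$ is guaranteed; take $\cU'=B_P$ instead, so the frame morphism $\cA(P)\to\cI_\cU$ is $C\mapsto\Gamma_\cU(C)$ and preservation of the binary meet $\pd\cap S^\downarrow$ gives the identity (in fact the paper gets \eqref{f}$\Rightarrow$\eqref{T} even more cheaply, as the frame law in $\cI_\cU$ applied to $\eta(p)$ and $\eta[S]$). Your \eqref{T''}$\Rightarrow$\eqref{D} step and the handling of \eqref{des} are fine.
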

\begin{proof}
That (\ref{f}) and (\ref{i}) are equivalent is \cite[theorem 3.5]{Egr17b}, that (\ref{d})$\implies$(\ref{f}) is \cite[corollary 3.7]{Egr17b}, and that (\ref{T''})$\implies$(\ref{f}) is \cite[corollary 3.6]{Egr17b}. We could complete the proof by filling in the missing implications, but to avoid relying on results from \cite{Egr17b} whose proofs can be simplified, we provide an alternative, self-contained proof below. This theorem can be considered to supersede the previous results.

We prove the equivalence of (1)-(7) in a circle, and after this we deal with the remaining three.
\begin{enumerate}[(1)]
\item[](\ref{f})$\implies$(\ref{T}): Noting that $\pd\cap \Gamma_\cU(T) = \Gamma_\cU(\pd\cap T^\downarrow)$ is another way of writing $\eta(p)\wedge \bv \eta[T] = \bv_T(\eta(p)\wedge \eta(t))$,  if (\ref{T}) fails then $\cI_\cU$ is clearly not a frame.

\item[] (\ref{T})$\implies$(\ref{T'}): This follows immediately from the fact that $\Gamma_{\cU^+}=\Gamma_\cU$, which in turn follows from proposition \ref{P:adj}.

\item[] (\ref{T'})$\implies$(\ref{T''}): Trivial.

\item[](\ref{T''})$\implies$(\ref{D}): We prove the contrapositive. Suppose $S\in \cU$ and that $\Upsilon_\cU(S)$ is not down-closed. Then there is $p\in P$ with $p\leq \bv S$ and $p\notin
 \Upsilon_\cU(S)$. Since $p\leq \bv S$ we have $p\in \Gamma_\cU(S)$, and thus $\pd\cap \Gamma_\cU(S) = \pd$. However, by
 lemma \ref{L:fix}(1),  if $p\in\Gamma_\cU(\pd\cap S^\downarrow)$ then $p = \bv(\pd\cap
 S^\downarrow)$, and it would then follow from lemma \ref{L:fix}(2) that $\pd\cap S^\downarrow\in \cU^+$.
 But this would imply that $p\in \Upsilon_\cU(S)$, contradicting our contrary assumption. We conclude that $\neg$(\ref{D})$\implies\neg$(\ref{T''}).

\item[](\ref{D})$\implies$(\ref{U}): This follows directly from lemma \ref{L:down}.

\item[](\ref{U})$\implies$(\ref{i}): Suppose that (\ref{U}) holds but $\Upsilon_\cU(S)$ is not the smallest $\cU$-ideal containing $S$ for some $S\subseteq P$.  Let
 $\alpha$ be the smallest ordinal such that there is $p\in \Gamma^\alpha_\cU(S)\setminus \Upsilon_\cU(S)$. Then
 $\alpha$ must be a successor ordinal. So there is $T\subseteq \Gamma^{\alpha-1}_\cU(S)$, with $T\in \cU$ and $p\leq
 \bv T$. It follows from the minimality of $\alpha$ that $T\subseteq \Upsilon_\cU(S)$. Now, by the assumption of (\ref{U}) we have
 that $\Upsilon_\cU(T)$ is the smallest $\cU$-ideal of $P$ containing $T$, and thus $p\in \Upsilon_\cU(T)$. So there
 must be $T'\subseteq T^\downarrow$, with $T'\in\cU^+$ and $\bv T' = p$.

Since $T'\subseteq T^\downarrow$, and since $\Gamma^{\alpha-1}_\cU(S)$ is down-closed,  it follows again
 from the minimality of $\alpha$ that $T'\subseteq \Upsilon_\cU(S)$. Thus $p\in \Upsilon^2_\cU(S)$ (using the notation of definition \ref{D:U}), and by lemma \ref{L:equal} this is $\Upsilon_\cU(S)$. This contradicts the choice of $p$, so we conclude that (\ref{U})$\implies$(\ref{i}) as required.

\item[](\ref{i})$\implies$(\ref{f}): Let $I$ be an indexing set, let $C_i$ be $\cU$-ideals for all $i\in I$, and let $D$ be another $\cU$-ideal. We must show that $\Upsilon_\cU(\bigcup_I (D\cap C_i)) = D\cap \Upsilon_\cU(\bigcup_I C_i)$. Note that the left to right inclusion is automatic. So let $x\in D\cap \Upsilon_\cU(\bigcup_I C_i)$. Then there is
 $S\subseteq \bigcup_I C_i$ with $S\in \cU^+$ and $\bv S = x$. But then $S \subseteq D$, as $x\in D$. So $x \in \Upsilon_\cU(\bigcup_I D\cap C_i)$ as required.

\item[](\ref{i})$\iff$(\ref{des}): If $S\in\cU$, then $p\leq \bv S\implies p\in \Gamma_\cU(S)$, so if $\Gamma_\cU=\Upsilon_\cU$ we must have $p\in \Upsilon_\cU(S)$, and thus $p= \bv T$ for some $T\in\cU^+$ with $T\subseteq S^\downarrow$, by definition of $\Upsilon_\cU$.

Conversely, if $\Gamma_\cU\neq\Upsilon_\cU$ there is $X\subseteq P$ and a least $\alpha$ with $\Upsilon_\cU(X)\subset
 \Gamma_\cU^\alpha(X)$. Thus, as $\alpha$ must be a successor ordinal, there is $p\in P$, and $S\in\cU$, with
 $S\subseteq \Gamma_\cU^{\alpha-1}(X)$ and $p\leq \bv S$, but $p\neq \bv T$ for all $T\in\cU^+$ with
 $T\subseteq\Gamma_\cU^{\alpha-1}(X)$. So, in particular, $p\leq \bv S$, but $p\neq \bv T$ for all $T\in\cU^+$ with
 $T\subseteq S^\downarrow$.
\end{enumerate}[(1)]

Finally, (\ref{f}) trivially implies (\ref{d}), and, conversely, $\neg(\ref{T''})$ implies $\neg(\ref{d})$, by definition of the radius $\sigma$. That (\ref{f})$\iff$(\ref{h}) follows easily from proposition \ref{P:arrow}.

\end{proof}

Theorem \ref{T:gen} inspires the following definition.

\begin{defn}
If $\cU$ is a join-specification then we say it is \textup{\textbf{frame-generating}} if it satisfies the equivalent conditions of theorem \ref{T:gen}.
\end{defn}

We define the following notation for convenience.

\begin{defn}[$\cU_\alpha$, $\cU_\infty$]\label{D:Ua}
Given $P$, if $\alpha$ is a cardinal with $\alpha\geq 2$, define
\[\cU_\alpha=\{S\in\wp(P)\setminus\{\emptyset\} : |S|<\alpha\text{ and $\bv S$ exists in }P \}.\]
Similarly, define
\[\cU_\infty=\{S\in\wp(P)\setminus\{\emptyset\} :\text{$\bv S$ exists in P} \}.\]
\end{defn}

In \cite{HicMon84}, the focus is entirely on join-specifications of form $\cU_\kappa$, where $\kappa$ is a regular cardinal. In this situation we can strengthen definition \ref{D:des} as follows.

\begin{defn}\label{D:Sdes}
If $\cU$ is a join-specification for $P$, then $P$ has the \textup{\textbf{strong $\cU$-descent property}} if, for every down-closed $S\in \cU$ and $p\leq \bv S$, there is $T\in\cU$ with $T\subseteq S^\downarrow$ and $p=\bv T$.
\end{defn}

\cite[theorem 2.7]{HicMon84} is now included as a special case of our theorem \ref{T:gen} via the following lemma.

\begin{lemma}
Let $P$ be a poset, and let $\kappa$ be a regular cardinal. Then $P$ has the strong $\cU_\kappa$-descent property if and only if $\cU_\kappa$ is frame-generating.
\end{lemma}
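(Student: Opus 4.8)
The plan is to derive this from Theorem~\ref{T:gen}, specifically the equivalence of condition (\ref{f}) ($\cI_\cU$ is a frame) with condition (\ref{des}) ($P$ has the $\cU$-descent property). So it suffices to show that, \emph{for the special case $\cU=\cU_\kappa$ with $\kappa$ regular}, the $\cU_\kappa$-descent property is equivalent to the strong $\cU_\kappa$-descent property. The forward direction (strong implies ordinary) is immediate, since strong $\cU$-descent just replaces $\cU^+$ by the smaller set $\cU$ in the conclusion of Definition~\ref{D:des}: if we can find $T\in\cU_\kappa$ with $T\subseteq S^\downarrow$ and $p=\bv T$, then a fortiori $T\in\cU_\kappa^+$, so Definition~\ref{D:des} is satisfied. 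Hence the only real content is: $\cU_\kappa$-descent $\implies$ strong $\cU_\kappa$-descent.

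For that direction, I would start from a down-closed $S\in\cU_\kappa$ and $p\leq\bv S$, and use $\cU_\kappa$-descent (equivalently, assuming $\cI_{\cU_\kappa}$ is a frame, so all the conditions of Theorem~\ref{T:gen} are available) to get $T\in\cU_\kappa^+$ with $T\subseteq S^\downarrow$ and $p=\bv T$. The problem is that a member of $\cU_\kappa^+=\cU_{\Gamma_{\cU_\kappa}}$ need not lie in $\cU_\kappa$: it is any set of size less than $\kappa$—wait, no, membership in $\cU_\kappa^+$ does not even bound the cardinality of $T$ directly; rather $\cU_\kappa^+$ consists of those $T$ whose join exists and lies in $\Gamma_{\cU_\kappa}(T)$. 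So the key step is to replace such a $T$ by a genuine element of $\cU_\kappa$, i.e.\ a subset of $P$ of cardinality $<\kappa$ whose join is $p$. The natural move is to note that $\bv T = p$ means $p\in\Gamma_{\cU_\kappa}(T)$, and then to walk down the transfinite construction of $\Gamma_{\cU_\kappa}$ from Definition~\ref{D:G}: $p$ is witnessed at some stage as dominated by a join of a set $T_1\in\cU_\kappa$ of elements appearing at earlier stages, each of which is in turn a join of a $\cU_\kappa$-set of still-earlier elements, and so on. Collecting witnesses down a well-founded tree of depth at most $\hat\sigma$ (here $\sigma=\kappa$, so $\hat\sigma=\kappa$), and using regularity of $\kappa$ to bound the total number of leaves by $\kappa^{<\kappa}$... but that bound is wrong, so instead one should argue more carefully.

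The cleaner approach, and the one I would actually pursue: use condition (\ref{i}) of Theorem~\ref{T:gen}, which says $\Gamma_{\cU_\kappa}(T)=\Upsilon_{\cU_\kappa}(T)$, i.e.\ $p\in\Upsilon_{\cU_\kappa}(T)$ already at the single step of Definition~\ref{D:Unew}: there is $T'\in\cU_\kappa^+$ with $T'\subseteq T^\downarrow$ and $\bv T'=p$. That still leaves $T'\in\cU_\kappa^+$ rather than $\cU_\kappa$. But now recall that $\cU_\kappa^+$, by Definition~\ref{D:UG}, adds to $\cU_\kappa$ only sets whose join is \emph{already} their maximum, together with... no—I need to recall what $\cU_\kappa^+$ actually is. Since $B_P^+$ is described in Definition~\ref{D:B} as all subsets containing their maximum, and $\cU_\kappa^+\supseteq\cU_\kappa$, a member $T'\in\cU_\kappa^+$ satisfies $p=\bv T'\in\Gamma_{\cU_\kappa}(T')$; if $p\in T'$ then $\{p\}\in\cU_\kappa$ works trivially (singletons are in $\cU_\kappa$ since $\kappa\geq 2$), and $\{p\}\subseteq S^\downarrow$ because $p\leq\bv S$ and $S$ is down-closed gives $p\in S\subseteq S^\downarrow$. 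If $p\notin T'$, then $p\in\Gamma^1_{\cU_\kappa}(T')\setminus (T')^\downarrow$...

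So the main obstacle is precisely pinning down, for a set $T'$ with $\bv T'=p$ and $p\in\Gamma_{\cU_\kappa}(T')$ but $p\notin(T')^\downarrow$, a genuine $\cU_\kappa$-sized subset of $(T')^\downarrow$ (hence of $S^\downarrow$) with join $p$; and here is where regularity of $\kappa$ enters. By $\cU_\kappa$-descent applied recursively down the $\Gamma$-stages—or, more efficiently, by (\ref{i}) giving a one-step witness $T_1\in\cU_\kappa$ with $T_1\subseteq (T')^\downarrow$ and $\bv T_1 = p$ (this is exactly the $\cU_\kappa$-descent property applied to the down-closed set $(T')^\downarrow$, noting $(T')^\downarrow\in\cU_\kappa$ when $T'$ is since down-closures of $<\kappa$-joinable sets of size $<\kappa$... hmm, $(T')^\downarrow$ may be large). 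Thus the genuinely delicate point I would flag as the crux: showing that when $\cU_\kappa$-descent holds one may always take the witnessing set to have cardinality $<\kappa$, i.e.\ descend \emph{within} $\cU_\kappa$ rather than merely within $\cU_\kappa^+$. I expect this to follow by: apply Theorem~\ref{T:gen}(\ref{i}) to see $p\in\Upsilon_{\cU_\kappa}(S)$, so $p=\bv T_0$ for some $T_0\in\cU_\kappa^+$ with $T_0\subseteq S^\downarrow$; if $p\in T_0$ use $\{p\}$; otherwise $T_0\notin\cU_\kappa$ forces us to note $p\in\Gamma_{\cU_\kappa}(T_0)$ via a first successor stage, yielding $T_1\in\cU_\kappa$, $T_1\subseteq T_0^\downarrow\subseteq S^\downarrow$, with $p\leq\bv T_1\leq\bv T_0 = p$, so $p=\bv T_1$ and $T_1\in\cU_\kappa$ as required. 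Conversely, strong $\cU_\kappa$-descent trivially gives $\cU_\kappa$-descent, and then Theorem~\ref{T:gen} gives that $\cU_\kappa$ is frame-generating. This completes the proof.
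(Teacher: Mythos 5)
Your forward direction is fine, and your reduction of the converse to ``convert a $\cU_\kappa^+$-witness into a $\cU_\kappa$-witness'' is a reasonable framing, but the step you use to perform that conversion does not work. From $T_0\in\cU_\kappa^+$ with $T_0\subseteq S^\downarrow$, $p=\bv T_0$ and $p\notin T_0$, you conclude that $p$ is witnessed ``via a first successor stage'', i.e.\ that there is $T_1\in\cU_\kappa$ with $T_1\subseteq T_0^\downarrow$ and $p\leq\bv T_1$. But membership of $T_0$ in $\cU_\kappa^+$ only says $\bv T_0\in\Gamma_{\cU_\kappa}(T_0)$, and gives no control over the stage of the transfinite construction of definition \ref{D:G} at which $p$ appears; it certainly does not place $p$ in $\Gamma^1_{\cU_\kappa}(T_0)$. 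Re-applying theorem \ref{T:gen}(\ref{i}) (that $\Gamma_{\cU_\kappa}=\Upsilon_{\cU_\kappa}$) only returns yet another witness in $\cU_\kappa^+$ -- exactly the circularity you flagged earlier in your own write-up and then did not actually resolve. So the crux step, as written, is a genuine gap. (A smaller slip: ``$p\leq\bv S$ and $S$ down-closed gives $p\in S$'' is not a valid inference; it is harmless where you use it only because there $p\in T_0\subseteq S^\downarrow$.)

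What you are missing is a one-line cardinality observation that makes the conversion trivial and is the real reason the descent and strong descent properties coincide for $\cU_\kappa$: since $S\in\cU_\kappa$ is down-closed, $S^\downarrow=S$ and $|S|<\kappa$, so \emph{any} $T_0\subseteq S^\downarrow$ whose join exists satisfies $|T_0|<\kappa$; moreover $T_0\neq\emptyset$ (as $\Gamma_{\cU_\kappa}(\emptyset)=\emptyset$, so $\emptyset\notin\cU_\kappa^+$), hence $T_0\in\cU_\kappa$ already and no further argument is needed. The paper takes an even more direct route: assuming $\cI_{\cU_\kappa}$ is a frame, it applies the frame identity to get $\pd=\pd\cap\Gamma_{\cU_\kappa}(S)=\Gamma_{\cU_\kappa}(\pd\cap S)$, whence $p=\bv(\pd\cap S)$ by lemma \ref{L:fix}(1), and $\pd\cap S\subseteq S$ is manifestly a member of $\cU_\kappa$. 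Either repair closes your argument, but as submitted the decisive step is unjustified.
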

\begin{proof}
If $P$ has the strong $\cU$-descent property, then it certainly has the $\cU$-descent property, so one direction follows immediately from the definition of frame-generating. For the converse, assume $\cU_\kappa$ is frame-generating. Let $S\in \cU_\kappa$ be down-closed, and
 suppose $p\leq \bv S$.  Since $\cI_{\cU_\kappa}$ is a frame, we have
\[\pd = \pd\cap \Gamma_{\cU_\kappa}(S) = \Gamma_{\cU_\kappa}(\pd\cap S).\]
So $p = \bv(\pd\cap S)$, and $\pd\cap S$ is clearly in $\cU_\kappa$.
\end{proof}

Some equivalent conditions for a complete lattice to be a frame are presented as \cite[theorem 2.1]{ErnWil83}, phrased as statements about closure systems and their associated closure operators. This theorem, modulo some fiddling, amounts to variations on proposition \ref{P:arrow}.

Note that \cite[theorem 2.1]{ErnWil83} is presented without proof, and directs the reader to \cite{Ern82} for details. The latter document is unfortunately not easily available, but proofs can be reconstructed with the kind of arguments used in the proof of proposition \ref{P:arrow}.

Example \ref{E:strength} below demonstrates that the restriction of corollary \ref{C:arrow} to finite $J$ is necessary. First we note the following fact, which is a mild generalization of \cite[theorem 4.4]{Egr17b}.

\begin{lemma}\label{L:MD}
Let $\alpha$ be a cardinal with $\alpha\geq 2$, and let $\cU$ be a join-specification for $P$ such that $\cU\subseteq \cU_\alpha\subseteq \cU^+$. Suppose $P$ has the property that for all $p\in P$ and $T\in \cU$, whenever $p\wedge \bv T$ is defined, $\bv_T(p\wedge t)$ is also defined, and the two are equal. Then $\cI_\cU$ is a frame.
\end{lemma}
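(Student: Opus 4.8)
The plan is to verify one of the equivalent conditions from Theorem \ref{T:gen}, and the most convenient target is condition (\ref{des}), the $\cU$-descent property: for every down-closed $S\in\cU$ and every $p\leq\bv S$, we must produce $T\in\cU^+$ with $T\subseteq S^\downarrow$ and $p=\bv T$. So fix such an $S$ and such a $p$. The natural candidate is $T=\pd\cap S=\pd\cap S^\downarrow$ (the two agree since $S$ is down-closed). Certainly $T\subseteq S^\downarrow$, and $p$ is an upper bound of $T$ since every element of $T$ lies below $p$. It remains to show $\bv T$ exists and equals $p$, and then that $T\in\cU^+$.

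For the join computation, I would invoke the meet-distributivity hypothesis. We have $p\leq\bv S$, so $p\wedge\bv S=p$ is defined in $P$ (here $\bv S$ exists because $S\in\cU\subseteq\cU_\alpha$, and $S\in\cU$). By hypothesis $\bv_{s\in S}(p\wedge s)$ is then defined and equals $p\wedge\bv S=p$. But $p\wedge s=p\wedge s$ is just the largest element of $\pd\cap s^\downarrow$; taking the join over $s\in S$, and using that $S$ is down-closed so that $\{p\wedge s:s\in S\}$ is cofinal in $\pd\cap S^\downarrow = T$, we get $\bv T=\bv_{s\in S}(p\wedge s)=p$. (A small lemma-style remark: if $\bv A$ exists and $A$ is cofinal in $B\subseteq A^\downarrow$, then $\bv B$ exists and equals $\bv A$; this is elementary and I would state it in one line.) Hence $\bv T=p$ exists.

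Finally I must check $T=\pd\cap S^\downarrow\in\cU^+$. Since $\bv T$ exists, it suffices by Definition \ref{D:UG} to show $\bv T\in\Gamma_\cU(T)$, i.e. $p\in\Gamma_\cU(\pd\cap S^\downarrow)$. Here is where the hypothesis $\cU\subseteq\cU_\alpha\subseteq\cU^+$ enters decisively: from $\cU_\alpha\subseteq\cU^+$ and $\Gamma_{\cU^+}=\Gamma_\cU$ (Proposition \ref{P:adj}), the set $T$, having cardinality less than $\alpha$ (it is a subset of $S^\downarrow$... actually one checks $|T|<\alpha$ or rather that its cofinal image $\{p\wedge s: s\in S\}$ has size $<\alpha$, so $\bv$ of that set is witnessed by a set in $\cU_\alpha\subseteq\cU^+$) and a defined join, satisfies $\bv T\in\Gamma_\cU(T)$ because the single-step closure $\Upsilon$ built from $\cU^+$ already contains $\bv T$. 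More carefully: the set $\{p\wedge s:s\in S\}$ is a subset of $T$ with $|\{p\wedge s : s\in S\}|<\alpha$ (as $|S|<\alpha$), it lies in $\cU_\alpha$, hence in $\cU^+$, and its join is $p$; since it is contained in $T^\downarrow$, Definition \ref{D:Unew} gives $p\in\Upsilon_\cU(T)\subseteq\Gamma_\cU(T)$ by Lemma \ref{L:3.3}. Therefore $T\in\cU^+$, completing the verification of the $\cU$-descent property, and $\cI_\cU$ is a frame by Theorem \ref{T:gen}.

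The main obstacle I anticipate is bookkeeping around cardinalities and the empty set: one must make sure the witnessing sets (notably $\{p\wedge s:s\in S\}$, which could collapse to a singleton or, if $p$ is the bottom, behave degenerately) genuinely land in $\cU_\alpha$ given $\alpha\geq 2$, and that down-closedness of $S$ is used correctly to identify $\pd\cap S$ with $\pd\cap S^\downarrow$ and to get cofinality. None of these is deep, but they are the points where a sloppy argument would break. An alternative, essentially equivalent route would be to verify condition (\ref{T''}) directly — showing $\pd\cap\Gamma_\cU(S)=\Gamma_\cU(\pd\cap S^\downarrow)$ for $S\in\cU$ with $p\leq\bv S$ — using the meet-distributivity to compute the join and $\cU_\alpha\subseteq\cU^+$ to place the relevant set in $\cU^+$; I would mention this as the interchangeable option but carry out the descent-property version since it is the cleanest.
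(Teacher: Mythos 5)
Your proposal is correct and takes essentially the paper's route: the paper checks clause (\ref{T''}) of Theorem \ref{T:gen} while you check the descent clause (\ref{des}), but both verifications rest on exactly the same two facts, namely that the distributivity hypothesis gives $\bv_{s\in S}(p\wedge s)=p\wedge\bv S=p$, and that $\{p\wedge s : s\in S\}$ is a nonempty set of cardinality less than $\alpha$ with a defined join, hence lies in $\cU_\alpha\subseteq\cU^+$. Two minor remarks: that set itself already witnesses the descent property, so your detour through $T=\pd\cap S^\downarrow$ and $\Upsilon_\cU$ is harmless but unnecessary, and since $\emptyset\notin\cU_\alpha$ the hypothesis $\cU\subseteq\cU_\alpha$ already rules out the degenerate empty-set case you flag.
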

\begin{proof}
Let $T\in\cU$ and $p\in P$ with $p\leq \bv T$. Then, appealing to proposition \ref{P:pres} and the definitions of join and meet in $\cI_\cU$, we have
\[\pd\cap \Gamma_\cU(T) = \eta(p)\wedge \bv \eta[T] = \eta(p\wedge \bv T) = \eta(\bv_T(p\wedge t)),\]
and
\[\Gamma_\cU(\pd\cap T^\downarrow)=\bv_T(\eta(p\wedge t)).\]
Now, we must have $|T|<\alpha$, as $\cU\subseteq\cU_\alpha$, and $|\{p\wedge t:t\in T\}|=|T|$, so $\{p\wedge t:t\in T\}\in \cU^+$. Thus $\eta(\bv_T(p\wedge t))= \bv_T(\eta(p\wedge t))$, by proposition \ref{P:pres} and the definition of $\cU^+$, and so $\cI_\cU$ is a frame by theorem \ref{T:gen}(\ref{T''}).
\end{proof}

\begin{ex}\label{E:strength}
Let $P$ be the poset in figure \ref{F:pos}. Then $P$ is a complete lattice, and is also clearly distributive. Let $p\in P$ and let $Y\subseteq P$. We aim first to show that $p\wedge \bv Y= \bv_Y(p\wedge y)$, and consequently that $P$ is a frame.

If $\bv Y=\bot$ then this is trivial,
 and the remaining possibilities divide into two cases, as either $\bv Y = s_n$ for some $n\in \omega$, or $\bv Y= x_m$
 for some $m\in \omega$.

In the first case, it follows that $Y\subseteq \{\bot,s_0,s_1,\ldots\}$, and that $\bv Y\in Y$. In
 this case $p\wedge \bv Y= p\wedge s_n =  \bv_Y(p\wedge y)$ as required.

Alternatively, suppose $\bv Y= x_m$ for some $m\in \omega$. Then either $x_m\in  Y$, in which case
\[p\wedge \bv Y = p\wedge x_m \leq \bv_Y(p\wedge y)\leq p\wedge \bv Y,\]
or $x_m\notin Y$, in which case $s_m\in Y$. There must be a smallest $k$ such that $x_k\in Y$, and then $s_m \vee x_k = x_m$. So
\[p\wedge \bv Y = p\wedge (s_m\vee x_k)= (p\wedge s_m)\vee (p\wedge x_k)\leq \bv_Y(p\wedge y)\leq p\wedge \bv Y. \]
Thus $P$ is a frame. So, by lemma \ref{L:MD}, $\cI_{\cU_\infty}$ is also a frame. However, if we define $S_n= s_0^\downarrow \cup x_n^\downarrow$ for all $n\geq 1$, then
 $\Gamma_{\cU_\infty}(S_n) = P$ for all $n$, but $\bigcap_\omega S_n = s_0^\downarrow$, and $\Gamma_{\cU_\infty}(s_0^\downarrow) =
 s_0^\downarrow$. Thus $\Gamma_{\cU_\infty}(\bigcap_\omega S_n)\subset \bigcap _\omega \Gamma_{\cU_\infty}(S_n)$.
\end{ex}

\begin{figure}[htbp]
\centering
\scalebox{0.7}{\xymatrix{ & \bullet{x_0}\ar@{-}[dr]\ar@{-}[dl] \\
\bullet{s_0}\ar@{-}[dr] & & \bullet{x_1}\ar@{-}[dr]\ar@{-}[dl] \\
 & \bullet{s_1}\ar@{-}[dr] & & \bullet{x_2}\ar@{-}[dr]\ar@{-}[dl] \\
& & \bullet{s_2}\ar@{-}[dr] & & \bullet{x_3}\ar@{..}[dr]\ar@{-}[dl] \\
& & & \bullet{s_3}\ar@{..}[dr] & & & & & \\
& & & & & & & & & &\\
& & & & &  \bullet{\bot}
}}
\caption{}
\label{F:pos}
\end{figure}

\section{A `local' categorical perspective: $\JF$, $\JF^+$ and $\Frm_P$}\label{S:cat}

As mentioned previously, given a poset $P$, the set of join-specifications for $P$ forms a complete lattice whose joins and meets are unions and intersections respectively. When we focus on frame-generating join-specifications, the situation is more interesting, though it turns out that the frame-generating join-specifications still form a complete lattice, as do the maximal frame-generating join-specifications. The goal in this and the next section is to define the joins and meets in these lattices, and explore their relationships, as categories, with a suitably defined category of frame-completions of $P$.

\begin{lemma}\label{L:init}
Let $\cU_1$, $\cU_2$ and $\cU_3$ be frame-generating join-specifications for $P$ with $\cU_1\subseteq\cU_2\subseteq\cU_3$. Then there are frame morphisms $\phi_1$, $\phi_2$ and $\phi$ such that the diagram in figure
 \ref{F:factor} commutes. Moreover, $\phi_1$, $\phi_2$ and $\phi$ are the unique frame morphisms such that
 corresponding triangles with $\eta$ maps commute.
\end{lemma}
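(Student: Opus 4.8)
The plan is to get everything directly from Theorem~\ref{T:gen}, specifically condition~(\ref{h}). Since $\cU_2$ is frame-generating and $\cU_1\subseteq\cU_2$, that condition gives a unique frame morphism $\phi_1\colon\cI_{\cU_1}\to\cI_{\cU_2}$ fixing $P$ (with respect to the canonical embeddings $p\mapsto\pd$ into $\cI_{\cU_1}$ and $\cI_{\cU_2}$). Applying the same condition to $\cU_2\subseteq\cU_3$ and to $\cU_1\subseteq\cU_3$, and using in both cases that $\cU_3$ is frame-generating, yields unique frame morphisms $\phi_2\colon\cI_{\cU_2}\to\cI_{\cU_3}$ and $\phi\colon\cI_{\cU_1}\to\cI_{\cU_3}$, each fixing $P$. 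This already supplies the three maps; the content of the lemma is then just that the triangle $\phi=\phi_2\circ\phi_1$ commutes, together with the uniqueness statement.

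For commutativity I would argue as follows. The composite $\phi_2\circ\phi_1$ is a frame morphism, being a composite of frame morphisms, and it fixes $P$: writing (with a mild notational abuse) $\eta_i\colon P\to\cI_{\cU_i}$ for the canonical embedding, we have $\phi_1\circ\eta_1=\eta_2$ and $\phi_2\circ\eta_2=\eta_3$, so $(\phi_2\circ\phi_1)\circ\eta_1=\phi_2\circ\eta_2=\eta_3$. By the uniqueness half of Theorem~\ref{T:gen}(\ref{h}) applied to $\cU_1\subseteq\cU_3$, the map $\phi_2\circ\phi_1$ must coincide with $\phi$, so the diagram in Figure~\ref{F:factor} commutes. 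The ``moreover'' clause is then immediate, since each of $\phi_1$, $\phi_2$, $\phi$ was produced as \emph{the} unique frame morphism making the relevant triangle with the $\eta$ maps commute.

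Alternatively, one can sidestep the uniqueness argument for commutativity by using the explicit description from Proposition~\ref{P:arrow}, where the map is given by $C\mapsto\Gamma(C)$. Here this gives $\phi_1(C)=\Gamma_{\cU_2}(C)$, $\phi_2(C)=\Gamma_{\cU_3}(C)$ and $\phi(C)=\Gamma_{\cU_3}(C)$, and then $\phi_2(\phi_1(C))=\Gamma_{\cU_3}(\Gamma_{\cU_2}(C))=\Gamma_{\cU_3}(C)$ by the closure-operator identity $\Gamma_{\cU_3}\circ\Gamma_{\cU_2}=\Gamma_{\cU_3}$ (valid because $\Gamma_{\cU_2}\leq\Gamma_{\cU_3}$, as $\cU_2\subseteq\cU_3$), which is exactly the kind of computation already carried out in the proof of Proposition~\ref{P:arrow}.

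I do not expect a genuine obstacle here: essentially all the work sits in Theorem~\ref{T:gen}(\ref{h}) (equivalently Proposition~\ref{P:arrow}), and the only point requiring a little care is ensuring that ``fixes $P$'' always refers to the correct pair of canonical embeddings when $\phi_1$ and $\phi_2$ are composed — which is precisely the routine verification indicated above.
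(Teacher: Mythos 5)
Your proposal is correct and follows essentially the same route as the paper, which simply derives the lemma from Proposition~\ref{P:arrow}: your primary argument via Theorem~\ref{T:gen}(\ref{h}) is just a repackaging of that proposition, and your alternative computation $\Gamma_{\cU_3}\circ\Gamma_{\cU_2}=\Gamma_{\cU_3}$ is exactly the explicit version of the paper's one-line proof. Both the existence/commutativity argument and the uniqueness via ``fixes $P$'' are handled correctly.
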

\begin{proof}
This follows directly from proposition \ref{P:arrow}.
\end{proof}

\begin{figure}
\begin{minipage}[b]{0.5\textwidth}
\centering\scalebox{1}{
\xymatrix{
P\ar[d]_\eta\ar[dr]_\eta\ar[drr]^\eta \\
 \cI_{\cU_1}\ar[r]_{\phi_1}\ar@/_1.5pc/[rr]_{\phi} & \cI_{\cU_2}\ar[r]_{\phi_2} & \cI_{\cU_3}
}}
\caption{}
\label{F:factor}
\end{minipage}
\hfill
\begin{minipage}[b]{0.5\textwidth}
\centering\scalebox{1}{
\xymatrix{ P\ar[r]^{e_1}\ar[dr]_{e_2} & L_1\ar[d]^f \\
& L_2
}}
\caption{}
\label{F:arrows}
\end{minipage}
\end{figure}

Given a poset $P$, recall that a join-completion of $P$ is an order embedding $e:P\to L$ where $L$ is a complete lattice and $e[P]$ is join-dense in $L$. Fixing $P$, define $\Frm_P$ to be the category whose objects are join-completions $e:P\to L$ such that $L$ is a frame, and whose arrows are commuting diagrams as in figure \ref{F:arrows}, where $f$ is a frame morphism. Note that such a morphism $f$ is necessarily onto.

So $\Frm_P$ is a subcategory of the coslice category $P\downarrow \Pos$, where $\Pos$ is the category of posets and monotone maps. $\Frm_P$ is clearly neither full nor essentially wide as a subcategory of $P\downarrow \Pos$.
 It is the case, however, that $\Frm_P$ is replete in $P\downarrow \Pos$. To see this note that if $e:P\to L$ is in
 $\Frm_P$ and $f:P\to Q$ is in $P\downarrow \Pos$, then an isomorphism between $e$ and $f$ in $P\downarrow \Pos$ forces
 $Q$ to be a frame and $f$ to be a join-completion. If $\cA(P) = \cI_{B_P}$, it follows from proposition \ref{P:arrow} that $\eta:P\to \cA(P)$ is an initial object in $\Frm_P$. Note that $\Frm_P$ is a thin category.

Let $\JF$ be the category whose objects are frame-generating join-specifications for $P$, and whose maps are just
 inclusions. Note that $\JF$ is never empty, as $B_P$ is always frame-generating. We can define a map from $\JF$ to $\Frm_P$ by taking $\cU$ to $\eta:P\to\cI_\cU$, and taking
 inclusions to the induced maps $\phi$ from lemma \ref{L:init}. Using lemma \ref{L:init},
 it's straightforward to prove that this map defines a functor, which we call $F_P$. Note that $F_P$ is full and faithful,
 as fullness follows from lemma \ref{L:init}, and faithfulness follows trivially from the fact that $\JF$ is thin. Thus
 $\JF$ is equivalent as a category to its image under $F_P$.

Conversely, given a join-completion $e:P\to L$ we obtain a closure operator, $\Gamma_e$ say, on $P$ by defining the
 closed sets to be all sets of form $e^{-1}(\xd)$ for $x\in L$ (as in the definition of $\gamma$ in proposition \ref{P:closure}). We can then define the join-specification
 $\cU_{\Gamma_e}$ using definition \ref{D:UG}, and subsequently we can construct $\eta:P\to \cI_{\cU_{\Gamma_e}}$. We
 can think of $L$ as being a complete lattice of $\cU_{\Gamma_e}$-ideals, while $\cI_{\cU_{\Gamma_e}}$ is the complete
 lattice of \emph{all}
 $\cU_{\Gamma_e}$-ideals, and thus likely to be larger. Moreover, as example \ref{E:notMod} demonstrates below, even if
 $L$ is (completely) distributive, the lattice $\cI_{\cU_{\Gamma_e}}$ may not even be modular. Thus the most obvious
 candidate for a functor from $\Frm_P$ to $\JF$ fails to even be well defined. Nevertheless, such a functor does exist, as
 we shall see.

\begin{ex}\label{E:notMod}
Let $\phi:P\to L$ be the join-completion as defined in figure \ref{F:notModL}. Here the elements of $\phi[P]$ are represented with $\bullet$, and the elements of $L\setminus \phi[P]$ are represented with $\circ$. Certain elements are labelled for reference later. Inspection reveals that $L$ is distributive. Let $\Gamma_\phi$ be the standard closure operator associated with $\phi$, and for brevity write $\cU_\phi$ for $\cU_{\Gamma_\phi}$.

We note that $\phi$ preserves all joins that are defined in $P$, so $\cI_{\cU_\phi}$ is just the set of all downsets of $P$ that are closed under existing joins. Consider the sets $b^\downarrow$, $d^\downarrow$, $e^\downarrow$,
 $\{b,c\}^\downarrow$, $\{a,b,c\}^\downarrow$. These are all closed under joins existing in $P$, so they are all
 elements of $\cI_{\cU_\phi}$. Moreover, these elements as a sublattice of $\cI_{\cU_\phi}$ form the pentagon, as shown
 in figure \ref{F:notModI}. This can be seen by noting that the $\Gamma_\phi$-closures of the sets $\{a,b,c,d\}$ and
 $\{b,c, d\}$ must both be $e^\downarrow$, as the join of $b$ and $d$ is $e$ in $P$. Thus $\cI_{\cU_\phi}$ is not modular.

As a bonus, since $\cU_\phi$ is the largest join-specification for $P$, it follows that there is no $\cU\in\JF$ with $\cU_\phi\subseteq \cU$. This is relevant as it means we can't hope to `approximate $\cU_\phi$ from above' with members of $\JF$.
\end{ex}

\begin{figure}
\begin{minipage}[b]{0.5\textwidth}
\centering\scalebox{1}{
\xymatrix{
\circ\ar@{-}[d]\ar@{-}[dr]\\
 \bullet\ar@{-}[dr] & \bullet_e\ar@{-}[d]\ar@{-}[dr] \\
 & \circ\ar@{-}[dl]\ar@{-}[d]\ar@{-}[dr] & \bullet_d\ar@{-}[d] \\
 \bullet_a\ar@{-}[d]\ar@{-}[dr] & \bullet_b\ar@{-}[dl]\ar@{-}[dr] & \bullet_c\ar@{-}[dl]\ar@{-}[d] \\
 \bullet\ar@{-}[dr] & \bullet\ar@{-}[d] & \bullet\ar@{-}[dl] \\
 & \circ
}}
\caption{}
\label{F:notModL}
\end{minipage}
\hfill
\begin{minipage}[b]{0.5\textwidth}
\centering\scalebox{1}{
\xymatrix{
& e^\downarrow\ar@{-}[dl]\ar@{-}[ddr] \\
\{a,b,c\}^\downarrow\ar@{-}[dd] \\
& & d^\downarrow\ar@{-}[ddl] \\
\{b,c\}^\downarrow\ar@{-}[dr] \\
& c^\downarrow
}}
\caption{}
\label{F:notModI}
\end{minipage}
\end{figure}

Actually we can deduce that the lattice $\cI_{\cU_\phi}$ from example \ref{E:notMod} must fail to be distributive from general considerations. Corollary \ref{C:unique} below says that there is, up to isomorphism, at most one distributive $\cU$-preserving join-completion of $P$ whenever $P$ is finite. Since $L$ has this property, and since $\cI_{\cU_\phi}\not\cong L$, it follows that $\cI_{\cU_\phi}$ cannot be distributive.

\begin{lemma}\label{L:unique}
Let $D$ be a finite distributive lattice. Let $J(D)$ be the set of join-irreducible elements of $D$. Let $\cA (J(D))$ be the lattice of downsets of $J(D)$. Then $D\cong \cA(J(D))$.
\end{lemma}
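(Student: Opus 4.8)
The statement is the classical Birkhoff representation theorem for finite distributive lattices, so the plan is to exhibit the canonical order isomorphism directly. First I would record the purely order-theoretic fact, valid in any finite lattice, that every element $d$ is the join of the join-irreducible elements below it; call this $(\star)$. It follows by an easy induction: if $d$ is join-irreducible (or $\bot$) there is nothing to prove, and otherwise $d = a \vee b$ with $a, b < d$, so by the inductive hypothesis $a$ and $b$ are joins of join-irreducibles below them, hence below $d$. (In the language of the paper, $(\star)$ just says the inclusion $J(D) \hookrightarrow D$ is join-dense, i.e.\ a join-completion of the poset $J(D)$.)

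Next I would define $\eta : D \to \cA(J(D))$ by $\eta(d) = J(D) \cap d^\downarrow$; this is clearly well defined (each $\eta(d)$ is a downset of the subposet $J(D)$) and monotone. To see $\eta$ is an order embedding it suffices to show it reflects order: if $\eta(d) \subseteq \eta(d')$ then, using $(\star)$, $d = \bv \eta(d) \le \bv \eta(d') = d'$. In particular $\eta$ is injective. For surjectivity, given a downset $S$ of $J(D)$ put $d = \bv S$ (with $\bv \emptyset = \bot_D$). Plainly $S \subseteq \eta(d)$, and for the reverse inclusion take $j \in J(D)$ with $j \le d = \bv S$; by distributivity $j = j \wedge \bv S = \bv_{s \in S}(j \wedge s)$, a finite join by finiteness of $D$, so join-irreducibility of $j$ forces $j = j \wedge s$, i.e.\ $j \le s$, for some $s \in S$, whence $j \in S$ because $S$ is a downset. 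Thus $\eta(d) = S$. A bijective order embedding between posets is an order isomorphism, and an order isomorphism of lattices preserves meets and joins, so $D \cong \cA(J(D))$.

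The one step where distributivity is genuinely used — and hence the crux of the argument — is the surjectivity computation $\eta(\bv S) = S$; every other step relies only on finiteness. The only point requiring care is the degenerate case $S = \emptyset$, which is handled by the convention that $\bot_D$ is not join-irreducible, so that $\eta(\bot_D) = \emptyset$; this also makes the appeal to join-irreducibility legitimate (the $S$ arising above is nonempty precisely because $j \ne \bot_D$).

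Alternatively, one could route the argument through Proposition \ref{P:closure}: by $(\star)$ the inclusion $J(D) \hookrightarrow D$ is a join-completion, its associated standard closure operator has closed sets exactly $\{J(D) \cap d^\downarrow : d \in D\}$, and the surjectivity computation above shows these coincide with all downsets of $J(D)$; the join-completion/closure-operator correspondence then yields $D \cong \cA(J(D))$. The direct argument is shorter, so I would present that and mention the reformulation only in passing.
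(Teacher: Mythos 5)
Your proof is correct. Note, though, that the paper does not prove this lemma at all: it simply records it as well known and cites \cite[theorem 107]{Gra11}, since it is Birkhoff's representation theorem for finite distributive lattices. What you have written out is the standard textbook argument behind that citation: the map $d\mapsto J(D)\cap d^\downarrow$, the finite-lattice fact that every element is the join of the join-irreducibles below it (your $(\star)$, which needs only finiteness), and the surjectivity computation $j\le\bv S\Rightarrow j=\bv_{s\in S}(j\wedge s)\Rightarrow j\le s$ for some $s$, which is exactly where distributivity and join-irreducibility are used. You correctly isolate that crux, and you handle the only delicate point (the empty downset, via the convention that $\bot$ is not join-irreducible). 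So the comparison is simply brevity versus self-containment: the paper buys a one-line proof by outsourcing to Gr\"atzer, while your direct argument, or equivalently the reformulation through proposition \ref{P:closure} that you sketch (the inclusion $J(D)\hookrightarrow D$ is a join-completion whose closed sets are exactly the downsets of $J(D)$), supplies the details and would be a perfectly acceptable substitute.
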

\begin{proof}
This is well known. See e.g. \cite[theorem 107]{Gra11}.
\end{proof}

\begin{cor}\label{C:unique}
Let $P$ be a finite poset without a bottom element, and let $\cU$ be a join-specification for $P$. Then there is, up to isomorphism, at most one join-completion $e:P\to D$ such that $e$ is $\cU$-preserving and $D$ is distributive.
\end{cor}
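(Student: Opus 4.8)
The plan is to use Lemma \ref{L:unique} -- a finite distributive lattice is recovered from its poset of join-irreducibles -- and show that any distributive $\cU$-preserving join-completion of $P$ is forced to be $\cA(Q)$ for a single, intrinsic subposet $Q\subseteq P$, with $e$ correspondingly forced to be the standard embedding into $\cA(Q)$.

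\textbf{Reducing to finitely many candidates.} First, $D$ must be finite: every element of $D$ is a join of a subset of the join-dense set $e[P]$, and $e[P]$ is finite since $P$ is. So Lemma \ref{L:unique} applies and $D\cong\cA(J(D))$, where $J(D)$ is the poset of join-irreducibles of $D$. Next I would show $J(D)\subseteq e[P]$: because $P$ has no bottom, no $e(p)$ equals $\bot_D$ (such a $p$ would be least in $P$), so for $x\in J(D)$ join-density gives $x=\bv(e[P]\cap x^\downarrow)$ with $e[P]\cap x^\downarrow$ finite and nonempty, and join-irreducibility of $x$ forces $x$ to be the greatest element of this set, hence $x\in e[P]$. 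Setting $Q_e:=e^{-1}(J(D))$, the embedding $e$ restricts to an order-isomorphism $Q_e\xrightarrow{\sim}J(D)$, so $D\cong\cA(J(D))\cong\cA(Q_e)$, and chasing the isomorphisms shows $e$ is carried to the standard map $p\mapsto Q_e\cap p^\downarrow$ of $P$ into $\cA(Q_e)$. Thus the join-completion $e$ is determined up to isomorphism by the subset $Q_e\subseteq P$.

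\textbf{Making $Q_e$ intrinsic.} It then remains to identify $Q_e$ in terms of $P$ and $\cU$ alone. The key observation is that for $p\in P$, the element $e(p)$ fails to be join-irreducible in $D$ exactly when $e$ preserves the join of $S_p:=\{p'\in P:p'<p\}$. Indeed, if $e(p)=\bv e[S_p]$ then, this being a finite join of at least two distinct elements all strictly below $e(p)$, the element $e(p)$ is join-reducible; conversely, join-reducibility of $e(p)$ together with join-density forces $e(p)=\bv e[S_p]$ and, incidentally, $\bv S_p=p$ in $P$. Since $e$ preserves precisely the joins indexed by $\cU_{\Gamma_e}$, and $\cU^+\subseteq\cU_{\Gamma_e}$ because $e$ is $\cU$-preserving (Proposition \ref{P:adj} applied to the closure operator $\Gamma_e$ of $e$), one direction is immediate: if $S_p\in\cU^+$ then $p\notin Q_e$. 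I would then use the distributivity of $D$ to obtain the converse, namely that a $\cU$-preserving distributive join-completion cannot preserve $\bv S_p$ unless $S_p\in\cU^+$. Granting this, $Q_e=\{p\in P: p\text{ is minimal, or }\bv S_p\text{ does not exist in }P,\text{ or }S_p\notin\cU^+\}$, which depends only on $P$ and $\cU$. Hence $Q_{e_1}=Q_{e_2}$ for any two such completions, the induced isomorphisms with $\cA(Q_{e_i})$ respect the embeddings, and the two completions are isomorphic.

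The main obstacle is exactly this last converse: ruling out, using only the distributivity of $D$ (and not just the general closure-operator machinery), that $e$ preserves a join $\bv S_p$ with $S_p\notin\cU^+$. Everything preceding it -- reduction to the finite case, locating $J(D)$ inside $e[P]$, and reconstructing $e$ as the standard embedding into $\cA(Q_e)$ -- is routine bookkeeping with Lemma \ref{L:unique} and join-density.
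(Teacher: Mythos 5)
Your overall route is the paper's own: finiteness of $D$, lemma \ref{L:unique}, the location $J(D)\subseteq e[P]$ (using the absence of a bottom), and then an intrinsic description of $e^{-1}(J(D))$ in terms of $\cU$. The half you prove is also the half that comes for free: since a $\cU$-preserving join-completion satisfies $\cU^+\subseteq\cU_{\Gamma_e}$ (proposition \ref{P:univ} together with lemma \ref{L:contained}), any $p$ of the form $p=\bv S$ with $S\in\cU^+\setminus B_P^+$ has $e(p)$ join-reducible. (A minor slip aside: your final formula for $Q_e$ should also retain those $p$ for which $\bv S_p$ exists but is strictly below $p$, e.g.\ when $S_p$ has a maximum.)

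The converse you flag as the main obstacle is a genuine gap, and it cannot be closed: distributivity of $D$ does not force the joins preserved by $e$ to lie in $\cU^+$, and in fact the corollary fails as literally stated. Take $P=\{a,b,t\}$ with $a<t$, $b<t$ and $a,b$ incomparable, so $P$ is finite with no bottom, and take $\cU=B_P$, so that $\cU^+=B_P^+$ and $\cU^+\setminus B_P^+=\emptyset$. Every order embedding is trivially $\cU$-preserving, and both $\eta\colon P\to\cA(P)$ (five elements) and $\eta\colon P\to\cI_{\cU_\infty}$ (four elements: the downsets closed under the existing join $a\vee b=t$) are join-completions into distributive lattices by proposition \ref{P:pres}; they are not isomorphic, and in the second one $\eta(t)$ is join-reducible although $t$ is not the join of any member of $\cU^+\setminus B_P^+$. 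So the intrinsic description of $J(D)$ you were hoping for does not exist at this level of generality. The paper's proof asserts exactly such a description, namely $J(D)=\{e(p):p\neq\bv S\text{ for all }S\in\cU^+\setminus B_P^+\}$, without argument, i.e.\ it silently assumes that $e$ preserves no joins beyond those in $\cU^+$ --- precisely the point you isolated. The statement and both arguments do go through under an extra hypothesis guaranteeing this, for instance that every subset of $P$ possessing a join already lies in $\cU^+$, which is the situation in the corollary's one application (example \ref{E:notMod}, where $\cU$ is the largest join-specification), or if one reads the conclusion as uniqueness among completions $e$ with $\cU_{\Gamma_e}=\cU^+$.
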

\begin{proof}
By lemma \ref{L:unique}, if $D$ exists then $D \cong \cA(J(D))$. As $D$ is join-generated by $e[p]$ we must have $J(D)\subseteq e[P]$. In fact,
\[J(D)=\{e(p): p\neq \bv S \text{ for all }S\in \cU^+\setminus B_P^+\},\]
so $J(D)$ is determined by $\cU$, and the result follows.
\end{proof}

If $P$ has a bottom element then corollary \ref{C:unique} does not hold, but it almost does, as in this case there are at most two such distributive join-completions. This difference comes down to whether or not the bottom of $P$ is join-irreducible in $D$. If not then the bottom of $D$ is the original bottom of $P$, and if so then $D$ will have a new bottom corresponding to the empty join.

We now investigate some properties of the category $\JF$ and the functor $F_P$. First we define an interesting subcategory of $\JF$.

\begin{defn}\label{D:JFP}
Define $\JF^+$ to be the full subcategory of $\JF$ containing all maximal frame-generating join-specifications (recall definition \ref{D:max}).
\end{defn}

Note that $B_P^+$ is always frame-generating, so $\JF^+$ is never empty.
The following theorem provides some information about the structures of $\JF$ and $\JF^+$. It turns out that $\JF$ behaves well with respect to unions, and $\JF^+$ behaves well with respect to intersections, but, in general, neither behaves well with respect to both.

\begin{thm}\label{T:lims}
Let $I$ be an indexing set and let $\cU_i\in\JF$ for all $i\in I$. Then:
\begin{enumerate}[(1)]
\item $\bigcup_I \cU_i\in \JF$.\label{limsU}
\item Even finite unions of maximal frame-generating join-specifications may not be maximal.\label{limsM}
\item If $\cU_i\in \JF^+$ for all $i\in I$ then $\bigcap_I \cU_i\in \JF^+$.\label{limsI}
\item Even finite intersections of frame-generating join-specifications may not be frame-generating.\label{limsF}
\end{enumerate}
\end{thm}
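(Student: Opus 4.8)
The plan is to treat the four statements separately. Parts (\ref{limsU}) and (\ref{limsI}) I would prove directly, using the equivalence in Theorem \ref{T:gen} between being frame-generating and having the $\cU$-descent property (item (\ref{des})). For (\ref{limsU}), set $\cV=\bigcup_I\cU_i$; this is a join-specification because every member of $\cV$ lies in some $\cU_i$ and hence has a join, and every singleton lies in every $\cU_i$. To verify the $\cV$-descent property, take a down-closed $S\in\cV$ with $p\leq\bv S$ and choose $i_0$ with $S\in\cU_{i_0}$; since $\cU_{i_0}$ is frame-generating it has the $\cU_{i_0}$-descent property, so there is $T\in\cU_{i_0}^+$ with $T\subseteq S^\downarrow$ and $p=\bv T$. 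Because $\cU_{i_0}\subseteq\cV$ and $(-)^+=\G\circ\F$ is monotone (being a composite of the monotone maps of Proposition \ref{P:adj}), we get $T\in\cU_{i_0}^+\subseteq\cV^+$, which is exactly what the $\cV$-descent property asks for; so $\cV\in\JF$ by Theorem \ref{T:gen}.

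For (\ref{limsI}), write $\mathcal{W}=\bigcap_I\cU_i$; this is maximal by Lemma \ref{L:capmax}, so it remains to show it is frame-generating, and again I would check the $\mathcal{W}$-descent property. Given a down-closed $S\in\mathcal{W}$ with $p\leq\bv S$, apply the $\cU_i$-descent property for each $i$ to obtain $T_i\in\cU_i^+=\cU_i$ (the equality by maximality of $\cU_i$) with $T_i\subseteq S^\downarrow$ and $p=\bv T_i$. Put $T=\bigcup_I T_i$. Then $T\subseteq S^\downarrow$; moreover each $T_i\subseteq\pd$, so $p$ bounds $T$, while any upper bound of $T$ bounds every $T_i$ and so dominates $\bv T_i=p$, giving $\bv T=p$. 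For each $i$, monotonicity of $\Gamma_{\cU_i}$ yields $p=\bv T_i\in\Gamma_{\cU_i}(T_i)\subseteq\Gamma_{\cU_i}(T)$, so $T\in\cU_i^+=\cU_i$; hence $T\in\mathcal{W}$, and since $\mathcal{W}$ is maximal, $T\in\mathcal{W}^+$. Thus $\mathcal{W}$ has the $\mathcal{W}$-descent property, so it is frame-generating, and with Lemma \ref{L:capmax} it lies in $\JF^+$.

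Parts (\ref{limsM}) and (\ref{limsF}) require explicit counterexamples. For (\ref{limsM}) I would build a finite poset $P$ with two maximal frame-generating join-specifications $\cU_1,\cU_2$ such that some set $S$ has $\bv S\in\Gamma_{\cU_1\cup\cU_2}(S)$ but $S\notin\cU_1\cup\cU_2$; then $\cU_1\cup\cU_2$ is frame-generating by part (\ref{limsU}) yet not maximal. The design constraint is that $\bv S$ should be recoverable from $S^\downarrow$ only through a two-stage closure, with the first stage a join recorded by $\cU_1$ and the second a join recorded by $\cU_2$ --- a single maximal $\cU_i$ recording the whole chain would, by maximality, already contain $S$, which is why both pieces are needed. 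I expect the main obstacle to be arranging that $\cI_{\cU_1}$ and $\cI_{\cU_2}$ are both genuinely frames (checked via Theorem \ref{T:gen}(\ref{T''})): enlarging a join-specification by ``extra'' known joins very readily destroys the frame property, so $P$ must be chosen with care, and a small distributive lattice seems the natural habitat.

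For (\ref{limsF}) I would exhibit frame-generating $\cU_1,\cU_2$ whose intersection violates condition (\ref{T''}) of Theorem \ref{T:gen}. The delicate point is that $B_P\subseteq\cU_1\cap\cU_2$ and $\cI_{B_P}=\cA(P)$ is always a frame, so the failure cannot come from $\cU_1\cap\cU_2$ being ``too small''; rather, one must engineer a configuration in which the joins surviving in the intersection force a pentagon-type ($N_5$) sublattice into $\cI_{\cU_1\cap\cU_2}$, while each $\cU_i$ individually also contains the further joins that repair it. Once such a finite $P$ is written down the verification reduces to a routine finite computation; I would expect this construction to be the more delicate of the two counterexamples to set up.
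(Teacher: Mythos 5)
Your arguments for parts (\ref{limsU}) and (\ref{limsI}) are correct. Part (\ref{limsU}) is essentially the paper's proof translated from condition (\ref{D}) of theorem \ref{T:gen} into the equivalent descent-property formulation, the key step in both being monotonicity of $\cU\mapsto\cU^+$ (equivalently $\Upsilon_{\cU_{i_0}}(S)\subseteq\Upsilon_{\bigcup_I\cU_i}(S)$). For part (\ref{limsI}) your witness is genuinely different and arguably cleaner: you take $T=\bigcup_I T_i$ and check $\bv T=p$ and $p\in\Gamma_{\cU_i}(T_i)\subseteq\Gamma_{\cU_i}(T)$, hence $T\in\cU_i^+=\cU_i$ for every $i$; the paper instead fixes one index $k$ and uses the single set $\Upsilon_{\cU_k}(T_k)\cap S^\downarrow$, shown to lie in every $\cU_i$ via lemma \ref{L:fix}. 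Both routes use lemma \ref{L:capmax} and the maximality hypothesis $\cU_i^+=\cU_i$ in the same essential way, so this is a legitimate variant.

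The genuine gap is in parts (\ref{limsM}) and (\ref{limsF}): these are existence claims, and a proof must actually exhibit the counterexamples, whereas your proposal only describes design constraints (``a two-stage closure split between $\cU_1$ and $\cU_2$'' for (\ref{limsM}), ``engineer an $N_5$ in $\cI_{\cU_1\cap\cU_2}$'' for (\ref{limsF})) without producing a poset, the join-specifications, or any verification. The paper fills exactly this gap with concrete finite examples: for (\ref{limsM}), the poset of figure \ref{F:noUnion} with $\cU_1=B_P\cup\{\{a,b\}\}$ and $\cU_2=B_P\cup\{\{b,c\},\{d,e\}\}$, where $\{a,b,c\}$ lies in $(\cU_1^+\cup\cU_2^+)^+$ (its closure picks up $d$, then $e$, then $f$) but in neither $\cU_1^+$ nor $\cU_2^+$; and for (\ref{limsF}), the poset of figure \ref{F:strict} with two frame-generating specifications whose intersection is $B_P\cup\{\{a,b\},\{b,c\},\{c,d\},\{a,b,c,d,e,g\}\}$, which fails because $h,i\notin\Upsilon_{\cU_1\cap\cU_2}(\{a,b,c,d,e,g\})$, so $\Upsilon$ of that set is not down-closed. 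Note also that for (\ref{limsF}) you do not need to hunt for a pentagon sublattice: theorem \ref{T:gen}(\ref{D}) lets you refute the frame property directly by finding one element below $\bv S$ that is not a join of any $T\in(\cU_1\cap\cU_2)^+$ contained in $S^\downarrow$, which is how the paper's verification goes. Until such examples are written down and checked (and checking that the individual $\cU_i$ are frame-generating is part of the work), parts (\ref{limsM}) and (\ref{limsF}) remain unproved in your proposal.
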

\begin{proof}\mbox{}
\begin{enumerate}[(1)]
\item Let $S\in \bigcup_I \cU_i$, let $p\in P$, and suppose $p\leq \bv S$. Since $S\in \cU_j$ for some $j\in I$, and since $\cU_j\in
 \JF$, we have $p\in \Upsilon_{\cU_j}(S)$, by theorem \ref{T:gen}(\ref{D}). So,
 since $\Upsilon_{\cU_j}(S)\subseteq \Upsilon_{\bigcup_I\cU_i}(S)$, we also have $p\in \Upsilon_{\bigcup_I\cU_i}(S)$. Thus
 $\Upsilon_{\bigcup_I\cU_i}(S)$ is down-closed, and so $\bigcup_I \cU_i\in\JF$, again by theorem \ref{T:gen}(\ref{D}).
\item This is demonstrated in example \ref{E:noUnion} below.
\item By lemma \ref{L:capmax}, we have $(\bigcap_I \cU_i)^+=\bigcap_I \cU_i$. The strategy now is to prove that
 $\Upsilon_{\bigcap_I \cU_i}(S)$ is down-closed whenever $S\in \bigcap_I \cU_i$, and then to appeal to theorem
 \ref{T:gen}(\ref{D}).

So let $S\in \bigcap_I \cU_i$, and let $p\in P$ with $p<\bv S$. As $\Upsilon_{\cU_i}(S)$ must be down-closed for all
 $i\in I$ by theorem \ref{T:gen}(\ref{D}), for each $i\in I$ there is $T_i\in \cU_i=\cU_i^+$ with $T_i\subseteq S^\downarrow$
 and $\bv T_i = p$.

Choose some arbitrary $k\in I$. Again it follows from
 theorem \ref{T:gen}(\ref{D}) that $\Upsilon_{\cU_k}(T_k)$ is down-closed, and consequently that $T_i\subseteq
 \Upsilon_{\cU_k}(T_k)\cap S^\downarrow$ for all $i\in I$. So
 \[\Upsilon_{\cU_i}(\Upsilon_{\cU_k}(T_k)\cap S^\downarrow) = \pd,\]
and thus $\Upsilon_{\cU_k}(T_k)\cap
 S^\downarrow\in\cU_i^+=\cU_i$, by lemma \ref{L:fix}. So $\Upsilon_{\cU_k}(T_k)\cap S^\downarrow\in\bigcap_I
 \cU_i$, and so $p\in \Upsilon_{\bigcap_I\cU_i}(S)$. Thus
 $\Upsilon_{\bigcap_I\cU_i}(S)$ is down-closed as required.

\item This is demonstrated in example \ref{E:strict} below.
\end{enumerate}
\end{proof}

\begin{ex}\label{E:noUnion}
Let $P$ be the poset in figure \ref{F:noUnion}. Let $B_P$ be as in definition \ref{D:B}, let $\cU_1= B_P\cup
 \{\{a,b\}\}$, and let $\cU_2 = B_P\cup\{\{b,c\},\{d,e\}\}$. Using theorem \ref{T:gen}(\ref{D}) it's easy to check
 that $\cU_1$ and $\cU_2$ are frame-generating. Inspection reveals, after a little thought, that $\{a,b,c\}\notin
 \cU_1^+\cup \cU_2^+$. Thus $\cU_1^+\cup\cU_2^+$ is not maximal, as $\{a,b,c\}\in (\cU_1^+\cup \cU_2^+)^+$.
\end{ex}

\begin{figure}[htbp]
\centering
\scalebox{1}{\xymatrix{
& & & \bullet_f\ar@{-}[dl]\ar@{-}[d] \\
& & \bullet_d\ar@{-}[dl]\ar@{-}[d] & \bullet_e\ar@{-}[dl]\ar@{-}[d]\\
& \bullet_a & \bullet_b & \bullet_c
}}
\caption{}
\label{F:noUnion}
\end{figure}

\begin{ex}\label{E:strict}
Let $P$ be the poset in figure \ref{F:strict}. Let
\[\cU_1 =B_P\cup \{\{a,b\},\{b, c\},\{c,d\},\{b,g\},\{c,e\},\{a,b,c,d,e,g\}\}\]
and let
\[\cU_2 = B_P \cup\{\{a,b\},\{b,c\},\{c,d\},\{a,b,c\},\{b,c,d\},\{a,b,c,d,e,g\}\}.\]

Using theorem \ref{T:gen}(\ref{D}), it's straightforward to check that both $\cU_1$ and $\cU_2$ are frame-generating.

However, $\cU_1\cap \cU_2 = B_P\cup \{\{a,b\},\{b,c\},\{c,d\},\{a,b,c,d,e,g\}\}$, and so $h,i\notin \Upsilon_{\cU_1\cap \cU_2}(\{a,b,c,d,e,g\})$. So $\Upsilon_{\cU_1\cap \cU_2}(\{a,b,c,d,e,g\})$ is not down-closed, and thus
$\cU_1\cap \cU_2$ is not frame-generating, using theorem \ref{T:gen}(\ref{D}) again.

This example also demonstrates that the inclusion $(\bigcap_I \cU_i)^+\subseteq \cU_{\bw_I \Gamma_{\cU_i}}$ in the
 proof of lemma \ref{L:capmax} can be strict, as when we have equality, $(\bigcap_I \cU_i)^+$ is equal to the
 intersection $\bigcap_I\cU_i^+$ of maximal join-specifications. If these are all frame-generating, then $(\bigcap_I
 \cU_i)^+$ will also be frame-generating, by theorem \ref{T:lims}(\ref{limsI}). It would then follow that $\bigcap_I \cU_i$
 is also frame-generating, as it generates the same lattice. Thus, in example \ref{E:strict} there must be $S\in
 (\cU_1^+\cap\cU_2^+)\setminus (\cU_1\cap\cU_2)^+$, as otherwise $\cU_1\cap\cU_2$ would be frame-generating. We can just take, e.g. $S = \{a,b,c\}$ for a concrete example.
\end{ex}

\begin{figure}[htbp]
\centering
\scalebox{1}{\xymatrix{
& & & \bullet_j\ar@{-}[dl]\ar@{-}[d] \\
& & \bullet_h\ar@{-}[dl]\ar@{-}[d] & \bullet_i\ar@{-}[dl]\ar@{-}[d]\\
& \bullet_e\ar@{-}[d]\ar@{-}[dl] & \bullet_f\ar@{-}[d]\ar@{-}[dl] & \bullet_g\ar@{-}[d]\ar@{-}[dl]\\
\bullet_a & \bullet_b & \bullet_c &\bullet_d}}
\caption{}
\label{F:strict}
\end{figure}

\section{Completeness properties of $\JF$ and $\JF^+$}\label{S:comp}

Theorem \ref{T:comps} below proves that both $\JF$ and $\JF^+$ are complete as lattices, and describes their meet and join structures. First though we show that every join-specification can be reduced to a frame-generating one in a canonical way. We need the following lemma.

\begin{lemma}\label{L:remove}
Let $\cU$ be a join-specification, let $S\in\cU$, and let $X\subseteq \cU\setminus\{S\}$. Suppose $\Upsilon_\cU(S)$ is not down-closed. Then $\Upsilon_{\cU\setminus X}(S)$ is not down-closed.
\end{lemma}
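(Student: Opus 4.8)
The key observation is that $\Upsilon_\cU(S)$ as revised in Definition \ref{D:Unew} depends on $\cU$ only through $\cU^+$: we have $\Upsilon_\cU(S) = \{\bv T : T\in\cU^+ \text{ and } T\subseteq S^\downarrow\}$. So the failure of down-closure of $\Upsilon_\cU(S)$ means there is some $p\in P$ with $p\leq\bv S$ but $p\neq \bv T$ for every $T\in\cU^+$ with $T\subseteq S^\downarrow$ (here I use Lemma \ref{L:fix} to phrase non-membership in $\Upsilon_\cU(S)$ cleanly, exactly as in the proof of Theorem \ref{T:gen}, part (\ref{T''})$\implies$(\ref{D})). The goal is then to show the same $p$ witnesses that $\Upsilon_{\cU\setminus X}(S)$ fails to be down-closed.

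First I would check that $\cU\setminus X$ is still a join-specification: condition (1) of Definition \ref{D:jspec} is inherited, and since $X\subseteq\cU\setminus\{S\}$ contains no singletons (as $S\in\cU$ and $p\leq\bv S$ forces $S$ not to be a singleton — actually more carefully, $X$ could in principle contain singletons, but $B_P\subseteq\cU$ and we only need $\{q\}\in\cU\setminus X$ for all $q$; I should note that if $X$ contained a singleton we could harmlessly put it back, or simply observe the statement is only interesting when $X$ avoids singletons — in fact the cleanest route is: $\cU\setminus X$ still contains all singletons because removing $X\subseteq\cU\setminus\{S\}$ cannot affect whether $\{q\}\in\cU$ unless $\{q\}\in X$; if some singleton is in $X$, then $\cU\setminus X$ isn't a join-specification and the claim is vacuous, so assume not). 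Then $p\leq\bv S$ still gives $p\in\Gamma_{\cU\setminus X}(S)$ since $S\in\cU\setminus X$, so $p^\downarrow\cap\Gamma_{\cU\setminus X}(S) = p^\downarrow$. The core step is to show $p\notin\Upsilon_{\cU\setminus X}(S)$: by the revised definition this means showing $p\neq\bv T$ for all $T\in(\cU\setminus X)^+$ with $T\subseteq S^\downarrow$. Since $\cU\setminus X\subseteq\cU$, we have $\Gamma_{\cU\setminus X}\leq\Gamma_\cU$ pointwise, hence by applying the functor $\G$ (Proposition \ref{P:adj}, which reverses inclusions being a Galois connection... actually $\F\dashv\G$ means $\F$ is order-preserving, so $\cU'\subseteq\cU \implies \Gamma_{\cU'}\leq\Gamma_\cU \implies \cU'^+\subseteq\cU^+$) we get $(\cU\setminus X)^+\subseteq\cU^+$. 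Therefore any $T\in(\cU\setminus X)^+$ with $T\subseteq S^\downarrow$ is also in $\cU^+$ with $T\subseteq S^\downarrow$, and by hypothesis $p\neq\bv T$ for all such $T$. Hence $p\notin\Upsilon_{\cU\setminus X}(S)$, while $p\leq\bv S$, so $\Upsilon_{\cU\setminus X}(S)$ is not down-closed.

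\textbf{Main obstacle.} The real content is entirely in the inclusion $(\cU\setminus X)^+\subseteq\cU^+$, i.e. monotonicity of $\cU\mapsto\cU^+$, which is immediate from the Galois connection in Proposition \ref{P:adj}. The only genuine subtlety is the bookkeeping around whether $\cU\setminus X$ is a legitimate join-specification (the singleton condition), and matching the two characterizations of ``$p\notin\Upsilon$'' via Lemma \ref{L:fix} exactly as in the proof of Theorem \ref{T:gen}; I would handle the singleton issue with a one-line remark rather than a case split, since $X\subseteq\cU\setminus\{S\}$ and the interesting content is unaffected (one may always assume $X$ contains no singletons, or equivalently restrict to $X\subseteq\cU\setminus B_P$).
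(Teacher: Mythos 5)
Your proposal is correct and follows essentially the same route as the paper: both reduce the failure of down-closure to the existence of a witness $p\leq\bv S$ that is not the join of any $T\in\cU^+$ with $T\subseteq S^\downarrow$, and then observe that $(\cU\setminus X)^+\subseteq\cU^+$ (the paper leaves this monotonicity implicit, you cite the Galois connection), so the same witness works for $\cU\setminus X$. The extra bookkeeping about singletons is harmless but not needed in the paper's treatment.
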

\begin{proof}
$\Upsilon_\cU(S)$ fails to be down-closed if and only if there is $p<\bv S$ such that $p\neq \bv S'$ for all $S'\subseteq S^\downarrow$ with $S'\in\cU^+$. If there is no $S'\in \cU^+$ with the desired property, then there is certainly no such $S'\in(\cU\setminus X)^+$, so the result follows.
\end{proof}

\begin{prop}\label{P:core}
If $\cU$ is a join-specification, then there is a largest $\cU'\subseteq \cU$ with $\cU'\in\JF$.
\end{prop}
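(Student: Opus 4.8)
The plan is to show that the union of all frame-generating join-specifications contained in $\cU$ is itself frame-generating, so that it serves as the desired largest element $\cU'$. Let $\cD = \{\cV : \cV\subseteq \cU \text{ and } \cV\in\JF\}$. This collection is nonempty since $B_P\in\cD$ (recall $B_P$ is always frame-generating and $B_P\subseteq\cU$ as $\cU$ is a join-specification). Set $\cU' = \bigcup \cD$. Plainly $\cU'\subseteq\cU$ and $\cU'$ is a join-specification (it contains all singletons and every member has a defined join, since these properties are inherited from $\cU$), and $\cU'$ contains every member of $\cD$, so it only remains to verify $\cU'\in\JF$.

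To prove $\cU'\in\JF$ I would use the criterion theorem \ref{T:gen}(\ref{D}): it suffices to show $\Upsilon_{\cU'}(S)$ is down-closed for every $S\in\cU'$. Fix such an $S$. Then $S\in\cV$ for some $\cV\in\cD$, and since $\cV$ is frame-generating, $\Upsilon_\cV(S)$ is down-closed, again by theorem \ref{T:gen}(\ref{D}). The key point is to transfer down-closedness from $\Upsilon_\cV(S)$ up to $\Upsilon_{\cU'}(S)$, and this is exactly where lemma \ref{L:remove} enters, applied in contrapositive form: if $\Upsilon_{\cU'}(S)$ were \emph{not} down-closed, then taking $X = \cU'\setminus\cV$ (so that $\cU'\setminus X = \cV$, noting $S\notin X$), lemma \ref{L:remove} would give that $\Upsilon_\cV(S)$ is not down-closed, a contradiction. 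Hence $\Upsilon_{\cU'}(S)$ is down-closed, and since $S\in\cU'$ was arbitrary, $\cU'\in\JF$ by theorem \ref{T:gen}(\ref{D}).

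One small check I would make explicit is that lemma \ref{L:remove} as stated requires $X\subseteq \cU'\setminus\{S\}$ with $S\in\cU'$ — both hold here since $S\in\cV\subseteq\cU'$ and $X = \cU'\setminus\cV$ does not contain $S$ — and that its hypothesis is about $\Upsilon_{\cU'}(S)$ failing to be down-closed while its conclusion concerns $\Upsilon_{\cU'\setminus X}(S) = \Upsilon_\cV(S)$; the direction matches the contrapositive argument above. I do not anticipate a genuine obstacle here: the essential content is entirely packaged in lemma \ref{L:remove} together with the characterization of frame-generating join-specifications via down-closedness of $\Upsilon$, so the proof is short. The only thing to be careful about is the bookkeeping of which set plays the role of $X$ in lemma \ref{L:remove} and confirming that monotonicity-type subtleties (e.g.\ that $\cU'^+$ may differ from $\cV^+$) are already absorbed into the statement of lemma \ref{L:remove}, whose proof works purely at the level of the existence of suitable witnessing sets $S'$.
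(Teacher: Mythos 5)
Your proof is correct, but it takes a genuinely different route from the paper's. You build the largest frame-generating subspecification from below, as the union $\cU'=\bigcup\{\cV\subseteq\cU:\cV\in\JF\}$ (nonempty since $B_P$ belongs to it), and transfer down-closedness of $\Upsilon_\cV(S)$ up to $\Upsilon_{\cU'}(S)$ via the contrapositive of lemma \ref{L:remove}; your bookkeeping is right: $S\notin X=\cU'\setminus\cV$, $\cU'\setminus X=\cV$, and the monotonicity issue ($\cV^+\subseteq(\cU')^+$) is exactly what the proof of lemma \ref{L:remove} absorbs. In fact your argument is essentially an instance of theorem \ref{T:lims}(\ref{limsU}), which precedes this proposition in the paper and already says that arbitrary unions of frame-generating join-specifications are frame-generating; citing it would make the proposition immediate, since the union of all frame-generating subsets of $\cU$ is then itself a frame-generating subset of $\cU$ containing every other one. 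The paper instead works from above: it runs a transfinite recursion starting at $\cU$ that at each stage deletes every $S$ for which $\Upsilon(S)$ fails to be down-closed (theorem \ref{T:gen}(\ref{D})), and uses lemma \ref{L:remove} to certify that deleted sets can never become unproblematic, so the stabilized result is the largest frame-generating subset. Your bottom-up version is shorter and avoids the transfinite recursion and the cardinal bound $|\wp(P)|^+$; the paper's top-down construction buys a concrete description of which sets have to be discarded in passing from $\cU$ to its frame-generating core.
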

\begin{proof}
We will construct $\cU'$ using recursion. We make the following definitions:
\begin{itemize}
\item $U_0 = \cU$.
\item Let $\alpha+1$ be a successor ordinal. If $U_\alpha$ is frame-generating, then define $U_{\alpha+1} =U_\alpha$. Otherwise, let $X= \{S\in U_\alpha: \Upsilon_{U_\alpha}(S)$ is not down-closed$\}$, and define $U_{\alpha+1} = U_\alpha\setminus X$.
\item Let $\lambda$ be a limit ordinal. Define $U_\lambda = \bigcap_{\beta<\lambda} U_\beta$.
\end{itemize}
Let $\kappa=|\wp(P)|^+$, the successor cardinal of $|\wp(P)|$. Then $U_\kappa = U_{\kappa'}$ for all $\kappa'\geq \kappa$. So we can define $\cU'=U_\kappa$.

Now, $\cU'$ is certainly frame-generating, as the process of its construction leaves it with no sets that violate theorem \ref{T:gen}(\ref{D}). That it is the largest such subset of $\cU$ follows from lemma \ref{L:remove}, which implies that `problematic' sets violating theorem \ref{T:gen}(\ref{D}) do not become unproblematic by the removal of other sets, and thus it is necessary to remove every set that becomes problematic at any point in the construction. The process must terminate, and the limiting join-specification is $B_P$.
\end{proof}

\begin{defn}[$\cU^-$]\label{D:core}
Using proposition \ref{P:core}, given a join-specification $\cU$ define $\cU^-$ to be the largest frame-generating join-specification contained in $\cU$.
\end{defn}

\begin{lemma}\label{L:alwaysMax}
Let $e:P\to L$ be a join-completion, let $\Gamma_e$ be the standard closure operator associated with $e$, and let $\cU_e = \cU_{\Gamma_e}$. Then $\cU_e^-$ is maximal.
\end{lemma}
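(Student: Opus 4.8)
The plan is to show that $\cU_e^- = (\cU_e^-)^+$ by establishing the non-trivial inclusion $(\cU_e^-)^+ \subseteq \cU_e^-$. Since $\cU_e^-$ is the largest frame-generating join-specification contained in $\cU_e$, and since $(\cU_e^-)^+$ has the same associated closure operator as $\cU_e^-$ (by proposition \ref{P:adj}), the lattice $\cI_{(\cU_e^-)^+}$ equals $\cI_{\cU_e^-}$ and is therefore a frame; hence $(\cU_e^-)^+$ is itself frame-generating. So if I can show $(\cU_e^-)^+ \subseteq \cU_e$, then by maximality of $\cU_e^-$ among frame-generating subsets of $\cU_e$ I get $(\cU_e^-)^+ \subseteq \cU_e^-$, and the reverse inclusion is automatic, giving maximality of $\cU_e^-$.

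First I would record the key observation that $\cU_e$ is itself a maximal join-specification: by definition $\cU_e = \cU_{\Gamma_e}$ is the set of all $S$ whose join is preserved by $e$, and since $\Gamma_e$ is the closure operator whose closed sets are the $e^{-1}(x^\downarrow)$, one checks $\Gamma_{\cU_e} = \Gamma_e$ (the $\cU_e$-ideals are exactly the $\Gamma_e$-closed sets, because $e:P\to L$ is a join-completion and $L$ is join-generated by $e[P]$), whence $\cU_e^+ = \cU_{\Gamma_{\cU_e}} = \cU_{\Gamma_e} = \cU_e$. This is essentially the content of proposition \ref{P:adj}(3), applied to $\Gamma = \Gamma_e$: we have $\G \circ \F \circ \G = \G$, and $\cU_e = \G(\Gamma_e)$ lies in the image of $\G$, so $\cU_e$ is in $\mathbf{Fix}(\G\circ\F)$, i.e. $\cU_e$ is maximal.

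Next, the crucial step: $(\cU_e^-)^+ \subseteq \cU_e$. Since $\cU_e^- \subseteq \cU_e$, applying the monotone operator $(-)^+ = \G\circ\F$ gives $(\cU_e^-)^+ \subseteq \cU_e^+ = \cU_e$, using the maximality of $\cU_e$ just established. This is where the argument really turns on $\cU_e$ being a fixed point of $(-)^+$ — an arbitrary join-specification need not have this property, which is exactly why the lemma is stated for $\cU_e$ coming from a join-completion rather than for arbitrary $\cU$.

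Finally I would assemble the pieces. We have $(\cU_e^-)^+ \in \JF$ (it generates the same ideal lattice $\cI_{\cU_e^-}$, which is a frame, so theorem \ref{T:gen}(\ref{f}) applies) and $(\cU_e^-)^+ \subseteq \cU_e$; by the defining property of $\cU_e^-$ in proposition \ref{P:core} and definition \ref{D:core} as the \emph{largest} frame-generating join-specification inside $\cU_e$, we conclude $(\cU_e^-)^+ \subseteq \cU_e^-$. Combined with the automatic inclusion $\cU_e^- \subseteq (\cU_e^-)^+$, this yields $\cU_e^- = (\cU_e^-)^+$, i.e. $\cU_e^-$ is maximal. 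I expect the only subtle point to be the verification that $\Gamma_{\cU_e} = \Gamma_e$ (equivalently, that $\cU_e$ is maximal), but this is immediate from proposition \ref{P:adj}(3) once one observes $\cU_e$ is in the image of $\G$; everything else is a short chain of inclusions.
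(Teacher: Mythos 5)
Your proposal is correct and follows essentially the same route as the paper: note that $\cU_e$ is maximal by proposition \ref{P:adj}(3), deduce $(\cU_e^-)^+\subseteq \cU_e^+=\cU_e$ by monotonicity, observe that $(\cU_e^-)^+$ is frame-generating because it induces the same ideal lattice as $\cU_e^-$, and conclude $(\cU_e^-)^+=\cU_e^-$ from the defining maximality of $\cU_e^-$ inside $\cU_e$. One caveat: your parenthetical justification that $\Gamma_{\cU_e}=\Gamma_e$ because ``the $\cU_e$-ideals are exactly the $\Gamma_e$-closed sets'' is false in general --- $\cI_{\cU_{\Gamma_e}}$ is the lattice of \emph{all} $\cU_{\Gamma_e}$-ideals and can be strictly larger than $L$ (see example \ref{E:notMod} and proposition \ref{P:adj}(2), which says $\F\circ\G$ is not the identity) --- but this does no harm, since the maximality of $\cU_e$ follows, exactly as you also state, from $\G\circ\F\circ\G=\G$ applied to the fact that $\cU_e=\G(\Gamma_e)$ lies in the image of $\G$.
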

\begin{proof}
    Note that $\cU_e$ is always maximal, by proposition \ref{P:adj}(3), so
    $\cU_e^-\subseteq \cU_e\implies (\cU_e^-)^+\subseteq \cU_e^+ = \cU_e$.
    Thus $(\cU_e^-)^+$ is a frame-generating join-specification contained in $\cU_e$, and so must be equal to $\cU_e^-$, which is the largest such join-specification.
\end{proof}

\begin{thm}\label{T:comps}\mbox{}
\begin{enumerate}[(1)]
\item $\JF$ is complete and cocomplete, and is thus a complete lattice. Non-empty joins are given by unions, and the top and bottom are $\bigcup_{\cU\in\JF}\cU$ and $B_P$ respectively.\label{compsJF}
\item $\JF^+$ is complete and cocomplete, and is thus a complete lattice. Non-empty meets are given by intersections, and the top and bottom are $\bigcup_{\cU\in\JF}\cU$ and $B_P^+$ respectively. \label{compsJFP}
\end{enumerate}
\end{thm}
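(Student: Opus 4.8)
The plan is to verify directly that each of $\JF$ and $\JF^+$ is a complete lattice; for a poset (equivalently, a thin category) the properties of having all small limits, having all small colimits, and being a complete lattice are mutually equivalent, so it suffices in each case to exhibit all joins, or all meets. There is no size problem, as $\JF$ and $\JF^+$ are sub-posets of $\wp(\wp(P))$ ordered by inclusion, hence small. I shall also use the standard fact that a poset in which every subset has a supremum is automatically a complete lattice (the infimum of a subset being the supremum of its set of lower bounds).

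For (\ref{compsJF}): given a non-empty family $\{\cU_i\}_{i\in I}$ in $\JF$, Theorem \ref{T:lims}(\ref{limsU}) gives $\bigcup_I\cU_i\in\JF$, and $\bigcup_I\cU_i$ is visibly the least upper bound of the family in the inclusion order, hence its join in $\JF$. The join of the empty family is the least element of $\JF$, which is $B_P$: this belongs to $\JF$ because it is always frame-generating, and it is contained in every join-specification. So $\JF$ has all joins and is a complete lattice, with bottom $B_P$ and top $\bigcup_{\cU\in\JF}\cU$, the latter lying in $\JF$ by Theorem \ref{T:lims}(\ref{limsU}) applied to the (non-empty) set of all frame-generating join-specifications. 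Since $\bigcup\emptyset=\emptyset\neq B_P$ in general, the empty join is genuinely exceptional, which is why ``joins are unions'' is asserted only for non-empty families.

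For (\ref{compsJFP}): $\JF^+$ is the sub-poset of $\JF$ on the maximal join-specifications. Given a non-empty family $\{\cU_i\}_{i\in I}$ in $\JF^+$, Theorem \ref{T:lims}(\ref{limsI}) gives $\bigcap_I\cU_i\in\JF^+$, and this is plainly the greatest lower bound of the family in $\JF^+$, hence its meet. It remains to identify the greatest element of $\JF^+$ (the empty meet); the claim is that it is $\cU^{\ast}:=\bigcup_{\cU\in\JF}\cU$, the top of $\JF$ found above. As $\cU^{\ast}$ already dominates all of $\JF\supseteq\JF^+$, it is enough to show $\cU^{\ast}$ is maximal. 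Now $(\cU^{\ast})^+$ is a join-specification with $\cU^{\ast}\subseteq(\cU^{\ast})^+$, and by proposition \ref{P:adj}(3) we have $\Gamma_{(\cU^{\ast})^+}=\Gamma_{\cU^{\ast}}$, so by proposition \ref{P:trans} both generate the same ideal lattice $\cI_{\cU^{\ast}}$, which is a frame since $\cU^{\ast}\in\JF$. Hence $(\cU^{\ast})^+\in\JF$, and maximality of $\cU^{\ast}$ in $\JF$ forces $(\cU^{\ast})^+\subseteq\cU^{\ast}$; together with $\cU^{\ast}\subseteq(\cU^{\ast})^+$ this gives $\cU^{\ast}=(\cU^{\ast})^+$. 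So $\cU^{\ast}\in\JF^+$ and is its top, and therefore $\JF^+$ has all meets and is a complete lattice. Finally, its bottom is $B_P^+$: this is maximal (indeed $\cV^+$ is always maximal, by proposition \ref{P:adj}(3)), it is frame-generating (as noted after definition \ref{D:JFP}), and $B_P^+\subseteq\cU$ for every $\cU\in\JF^+$, since $B_P\subseteq\cU$ and $(-)^+$ is monotone with $\cU^+=\cU$.

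I expect no genuine obstacle here: Theorem \ref{T:lims} supplies all the substantive content, and the rest is routine manipulation using the lattice-theoretic fact quoted above. The only points needing a word of justification are that the top of $\JF$ is actually maximal, hence lies in $\JF^+$ — which rests on the identity $\Gamma_\cU=\Gamma_{\cU^+}$ from proposition \ref{P:adj}(3) — and that $B_P^+$, rather than $B_P$ or some intermediate join-specification, is the bottom of $\JF^+$.
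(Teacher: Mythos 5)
Your proof is correct, and its substantive inputs coincide with the paper's: theorem \ref{T:lims} supplies unions in $\JF$ and intersections in $\JF^+$, and the Galois connection of proposition \ref{P:adj} does the rest. Where you differ is in how completeness is then concluded. The paper exhibits the missing operations explicitly: meets in $\JF$ are given by $(\bigcap_I\cU_i)^-$ (so it leans on proposition \ref{P:core} and definition \ref{D:core}), and for $\JF^+$ it either applies the Knaster--Tarski theorem to the monotone map $\G\circ\F$ on the complete lattice $\JF$, or notes that joins in $\JF^+$ are $(\bigcup_I\cU_i)^+$ and identifies the top as $(\bigcup_{\cU\in\JF^+}\cU)^+$, afterwards showing this equals $\bigcup_{\cU\in\JF}\cU$. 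You instead invoke the standard fact that a poset with all joins (resp.\ all meets) is a complete lattice, so you never need $(-)^-$ or Knaster--Tarski, and you verify directly that the top $\cU^*$ of $\JF$ is maximal, via $\Gamma_{(\cU^*)^+}=\Gamma_{\cU^*}$, hence $(\cU^*)^+\in\JF$, hence $(\cU^*)^+\subseteq\cU^*$; this is a clean substitute for the paper's identification of the top of $\JF^+$. What your leaner version gives up is the explicit description of the remaining operations (e.g.\ that the meet in $\JF$ of the $\cU_i$ is $(\bigcap_I\cU_i)^-$), which the paper reuses later, for instance in the proof of theorem \ref{T:pres}; what it buys is independence from proposition \ref{P:core}. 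One small wording issue: your phrase ``maximality of $\cU^*$ in $\JF$'' means that $\cU^*$ is the top of $\JF$, not maximality in the sense of definition \ref{D:max}; the argument is sound, but the terminology clash is worth avoiding.
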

\begin{proof}\mbox{}
\begin{enumerate}[(1)]
\item $B_P$ is clearly the bottom of $\JF$, as it produces the lattice of downsets, $\cA(P)$, which is always completely distributive. The terminal object of $\JF$ is the union $\bigcup_{\cU\in \JF} \cU$, which is in $\JF$ by theorem \ref{T:lims}(\ref{limsU}). That non-empty joins correspond to unions follows from the same theorem. This shows that $\JF$ is cocomplete.

Now, let $I$ be an indexing set and let $\cU_i\in\JF$ for each $i\in I$. The limit for (the diagram associated with) $\{\cU_i:i\in I\}$ is given by $(\bigcap_I\cU_i)^-$ as in definition \ref{D:core}, and thus $\JF$ is complete.

\item We can deduce that $\JF^+$ is a complete lattice, and thus complete and cocomplete as a category, by an application of the Knaster-Tarski theorem \cite{Tar55a} to $\JF$, which we know from part \ref{compsJF} to be a complete lattice, and the functor $\G\circ\F$, which necessarily preserves order.

This result also follows directly from theorem \ref{T:lims}(\ref{limsI}), and the fact that if $\cU_i\in\JF$ for all $i\in I$ we must have that $(\bigcup_I \cU_i)^+$ is the smallest maximal frame-generating join-specification containing
 $\bigcup_I \cU_i$. From the latter observation we also see that the top of $\JF^+$ is $(\bigcup_{\cU\in\JF^+}\cU)^+$.
 Now, $\cU\subseteq \cU^+$ for all $\cU\in\JF$, so $\bigcup_{\cU\in\JF}\cU\subseteq (\bigcup_{\cU\in\JF^+}\cU)^+$, and
 the other inclusion is automatic as $(\bigcup_{\cU\in\JF^+}\cU)^+\in\JF$.
That the bottom of $\JF^+$ is $B_P^+$ is automatic from the definitions.
\end{enumerate}
\end{proof}

\begin{cor}\label{C:ref}
Let $\iota$ be the inclusion of $\JF^+$ into $\JF$, and let $M$ be the map from $\JF$ to $\JF^+$ defined by $M(\cU)=\cU^+$. Then $M\dashv \iota$.
\end{cor}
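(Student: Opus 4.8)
The plan is to show that $M$ is left adjoint to the inclusion $\iota$ by verifying the universal property directly, exploiting the fact that both $\JF$ and $\JF^+$ are thin categories (they are posets ordered by inclusion), so an adjunction amounts to a Galois connection: for all $\cU\in\JF$ and all $\cV\in\JF^+$, we must show
\[
M(\cU)\subseteq \cV \iff \cU\subseteq \iota(\cV).
\]
Since $\iota$ is just the inclusion, the right-hand side reads $\cU\subseteq\cV$, and $M(\cU)=\cU^+$, so the claim is that $\cU^+\subseteq\cV\iff\cU\subseteq\cV$ whenever $\cV$ is maximal and frame-generating.

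First I would check that $M$ is well-defined as a functor $\JF\to\JF^+$: given $\cU\in\JF$, its image $\cU^+=\G\circ\F(\cU)$ is maximal by construction (definition \ref{D:max} together with proposition \ref{P:adj}(3)), and it is frame-generating because $\cU$ and $\cU^+$ generate the same closure operator $\Gamma_\cU=\Gamma_{\cU^+}$ (proposition \ref{P:adj}), hence the same ideal lattice $\cI_\cU=\cI_{\cU^+}$, which is a frame by hypothesis; so $\cU^+\in\JF^+$. Functoriality (monotonicity) is immediate: if $\cU_1\subseteq\cU_2$ then $\Gamma_{\cU_1}\leq\Gamma_{\cU_2}$, and applying $\G$ gives $\cU_1^+\subseteq\cU_2^+$.

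Next I would establish the two directions of the Galois connection. For the forward direction, suppose $\cU^+\subseteq\cV$; since $\cU\subseteq\cU^+$ always holds (this is just the unit of the Galois connection $\F\dashv\G$ from proposition \ref{P:adj}), we get $\cU\subseteq\cV$ immediately. For the reverse direction, suppose $\cU\subseteq\cV$. Applying the monotone map $M$ (i.e. $(-)^+$) gives $\cU^+\subseteq\cV^+$, and since $\cV$ is maximal we have $\cV^+=\cV$, so $\cU^+\subseteq\cV$ as required. This is exactly the standard fact that a Galois connection restricts to an adjoint pair between the original poset and the poset of closed elements, applied here to $\JF$ and the closure operator $(-)^+$ restricted to $\JF$ (whose fixed points within $\JF$ are precisely the objects of $\JF^+$).

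I do not anticipate a serious obstacle here; the only point requiring a moment's care is confirming that $\cU^+$ genuinely lands in $\JF^+$ rather than merely in the lattice of all maximal join-specifications — that is, that it remains frame-generating — but this is exactly the observation above that $\cI_{\cU^+}=\cI_\cU$. One could alternatively phrase the whole argument as an instance of the general principle that if $\F\dashv\G$ is an adjunction between posets then $\G\circ\F$ is a monad/closure operator and the coslice of its algebras is reflective; restricting that reflection along the full subcategory $\JF^+\hookrightarrow\JF$ yields $M\dashv\iota$ directly. I would present the short explicit version for self-containedness.
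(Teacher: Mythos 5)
Your proof is correct and follows essentially the same route as the paper, which also establishes the adjunction via the Galois-connection condition $\cU^+\subseteq\cV\iff\cU\subseteq\cV$ for $\cU\in\JF$ and $\cV\in\JF^+$. Your additional check that $\cU^+$ lands in $\JF^+$ (via $\Gamma_{\cU^+}=\Gamma_\cU$, hence $\cI_{\cU^+}=\cI_\cU$) is a detail the paper leaves implicit, but it is the same underlying argument.
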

\begin{proof}
Since $\JF$ and $\JF^+$ are complete lattices, this follows from the fact that for $\cU_1\in \JF$ and $\cU_2\in \JF^+$ we have
\[\cU_1^+\subseteq \cU_2 \iff \cU_1\subseteq \cU_2.\]
\end{proof}

\begin{cor}\label{C:inc}
The inclusion functor of $\JF^+$ into $\JF$ preserves limits, but does not usually preserve colimits, including the initial object.
\end{cor}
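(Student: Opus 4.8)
The plan is to treat the two assertions separately: the limit-preservation part is purely formal, and the failure of colimit-preservation needs an explicit witness. For the first part I would invoke Corollary \ref{C:ref}: the inclusion $\iota\colon\JF^+\to\JF$ has $M$ as a left adjoint, so $\iota$ is a right adjoint and hence preserves every limit that exists in $\JF^+$. Since both $\JF$ and $\JF^+$ are complete lattices by Theorem \ref{T:comps}, all small limits exist, and this settles the claim. As a concrete cross-check one can note directly that for $\cU_i\in\JF^+$ the meet computed in $\JF$ is $(\bigcap_I\cU_i)^-$, but $\bigcap_I\cU_i\in\JF^+$ by Theorem \ref{T:lims}(\ref{limsI}), so $(\bigcap_I\cU_i)^-=\bigcap_I\cU_i$, which is exactly the meet computed in $\JF^+$ by Theorem \ref{T:comps}(\ref{compsJFP}).

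For the failure of colimit-preservation, the slickest witness is the empty colimit, i.e. the initial object. By Theorem \ref{T:comps} the initial object of $\JF$ is $B_P$ and that of $\JF^+$ is $B_P^+$. As soon as $P$ contains a comparable pair $p<q$ we have $\{p,q\}\in B_P^+\setminus B_P$, so $\iota(B_P^+)=B_P^+\neq B_P$; thus $\iota$ does not carry the initial object of $\JF^+$ to the initial object of $\JF$, and in particular it does not preserve colimits. (When $P$ is an antichain, $\JF$ and $\JF^+$ both collapse to the one-object category $\{B_P\}$ and the statement is vacuous, which is why ``usually'' is the appropriate qualifier.)

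To confirm that the obstruction is not confined to the initial object, I would point to Example \ref{E:noUnion}. There $\cU_1$ and $\cU_2$ are frame-generating, hence so are $\cU_1^+$ and $\cU_2^+$ (they generate the same ideal lattices, using Proposition \ref{P:adj}(3), so $\cI_{\cU_i^+}=\cI_{\cU_i}$ is a frame), and $\cU_1^+,\cU_2^+$ are maximal, so both lie in $\JF^+$. Their coproduct in $\JF$ is the union $\cU_1^+\cup\cU_2^+$ by Theorem \ref{T:lims}(\ref{limsU}) together with Theorem \ref{T:comps}(\ref{compsJF}), whereas their coproduct in $\JF^+$ is $(\cU_1^+\cup\cU_2^+)^+$; Example \ref{E:noUnion} exhibits a set in $(\cU_1^+\cup\cU_2^+)^+\setminus(\cU_1^+\cup\cU_2^+)$, so these differ and $\iota$ fails to preserve even finite coproducts. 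I do not anticipate any real obstacle: every ingredient is already available, and the only mild point needing care is the observation that $\cV^+\in\JF^+$ whenever $\cV\in\JF$, which holds because $\cV^+$ is a fixed point of $\G\circ\F$ and $\cI_{\cV^+}=\cI_\cV$.
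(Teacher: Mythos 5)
Your proposal is correct and follows essentially the same route as the paper: limit preservation via the adjunction $M\dashv\iota$ of corollary \ref{C:ref}, failure at the initial object via $B_P\subsetneq B_P^+$ (theorem \ref{T:comps}), and failure at non-empty colimits as a reformulation of theorem \ref{T:lims}(\ref{limsM}) witnessed by example \ref{E:noUnion}. The only blemish is your parenthetical aside: for a one-element poset the antichain case does not quite collapse $\JF$ to a single object (the join-specification $\{\emptyset,\{p\}\}$ is also frame-generating), though the initial object is still preserved there, so this does not affect the argument.
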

\begin{proof}
Preservation of limits follows immediately from corollary \ref{C:ref} and properties of right adjoints. That the inclusion does not usually preserve initial objects follows from theorem \ref{T:comps} and the fact that $B_P$ is usually a strict subset of $B_P^+$. That the inclusion may not preserve non-empty colimits is essentially a reformulation of theorem \ref{T:lims}(\ref{limsM}).
\end{proof}

\begin{cor}
$\JF^+$ is a reflective subcategory of $\JF$.
\end{cor}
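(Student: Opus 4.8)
The plan is to observe that this is an immediate consequence of the results already established. Recall that a full subcategory is \emph{reflective} precisely when its inclusion functor admits a left adjoint (the \emph{reflector}). So there are only two things to check: that $\JF^+$ is a full subcategory of $\JF$, and that the inclusion has a left adjoint.

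The first point is immediate from Definition \ref{D:JFP}, where $\JF^+$ was defined as the full subcategory of $\JF$ whose objects are the maximal frame-generating join-specifications. For the second point, I would simply invoke Corollary \ref{C:ref}, which exhibits the map $M:\JF\to\JF^+$, $M(\cU)=\cU^+$, together with the inclusion $\iota:\JF^+\to\JF$, and establishes $M\dashv\iota$. Since $M$ is thus a left adjoint to the inclusion of the full subcategory $\JF^+$, the subcategory is reflective by definition, with $M$ serving as the reflector. (One may also note, if desired, that the components of the unit are the inclusions $\cU\subseteq\cU^+$, which are the expected reflection arrows.)

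There is no real obstacle here: the corollary is a direct repackaging of Corollary \ref{C:ref}, and all the substantive work — verifying that $\JF$ and $\JF^+$ are complete lattices and that $\cU\mapsto\cU^+$ is the appropriate closure operator — was done in Theorem \ref{T:comps} and Corollary \ref{C:ref}. The only care needed is terminological, namely to state clearly that ``reflective subcategory'' means exactly ``full subcategory with left adjoint inclusion'', so that the chain of implications from Corollary \ref{C:ref} to the claim is transparent.
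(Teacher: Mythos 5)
Your proposal is correct and matches the paper exactly: the paper's proof is simply that the corollary is a restatement of Corollary \ref{C:ref}, i.e.\ that $M\dashv\iota$ with $\JF^+$ full in $\JF$ by Definition \ref{D:JFP}. Your additional remark about the unit components being the inclusions $\cU\subseteq\cU^+$ is a harmless elaboration of the same argument.
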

\begin{proof}
This is just a restatement of corollary \ref{C:ref}.
\end{proof}

Theorem \ref{T:pres} below examines the preservation of limits and colimits by the functor $F_P:\JF\to \Frm_P$ and its restriction to $\JF^+$. First we will construct the right adjoint for the functor $F_P$. We will need some technical results. Recall definition \ref{D:JC} for the definition of $\JC$.

\begin{lemma}\label{L:contained}
Let $e_1:P\to L_1$ and $e_2:P\to L_2$ be in $\JC$. Let $f:L_1\to L_2$ be a completely join-preserving map fixing $P$. For $i\in\{1,2\}$ let $\Gamma_{e_i}$ be the standard closure operator associated with $e_i$. Then $\cU_{\Gamma_{e_1}}\subseteq \cU_{\Gamma_{e_2}}$.
\end{lemma}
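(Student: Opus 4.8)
The plan is to first reformulate membership in $\cU_{\Gamma_e}$, for a join-completion $e\colon P\to L$, into a statement about join-preservation: $S\in\cU_{\Gamma_e}$ if and only if $\bv S$ exists in $P$ and $e(\bv S)=\bv e[S]$, where the latter join is computed in $L$. This is essentially the remark following Definition~\ref{D:UG}, but I would include the two-line verification. The $\Gamma_e$-closed sets are exactly the sets $e^{-1}(x^\downarrow)$ for $x\in L$, and $S\subseteq e^{-1}(x^\downarrow)$ holds precisely when $\bv e[S]\leq x$; hence $\Gamma_e(S)=e^{-1}\big((\bv e[S])^\downarrow\big)$. Since $e$ is monotone we always have $e(\bv S)\geq \bv e[S]$, so $\bv S\in\Gamma_e(S)$ (equivalently, by Definition~\ref{D:UG}, $S\in\cU_{\Gamma_e}$) if and only if $e(\bv S)=\bv e[S]$.

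With this reformulation the lemma is a short diagram chase. Let $S\in\cU_{\Gamma_{e_1}}$, so that $\bv S$ exists in $P$ and $e_1(\bv S)=\bv e_1[S]$ in $L_1$. Applying $f$, using that $f$ fixes $P$ (so $f\circ e_1=e_2$) and that $f$ is completely join-preserving (legitimate since $L_1$ is complete, so $\bv e_1[S]$ exists), we obtain
\[
e_2(\bv S)=f\big(e_1(\bv S)\big)=f\big(\bv e_1[S]\big)=\bv f\big[e_1[S]\big]=\bv (f\circ e_1)[S]=\bv e_2[S].
\]
Thus $e_2$ preserves the join of $S$, and $\bv S$ exists in $P$, so $S\in\cU_{\Gamma_{e_2}}$ by the reformulation above. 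Since $S$ was arbitrary, $\cU_{\Gamma_{e_1}}\subseteq\cU_{\Gamma_{e_2}}$.

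There is no substantial obstacle here; the only points requiring a little care are the translation between ``$\bv S\in\Gamma_{e_i}(S)$'' and ``$e_i$ preserves $\bv S$'', and checking that the relevant joins genuinely exist ($\bv e_i[S]$ because each $L_i$ is a complete lattice, and $\bv S$ because membership in $\cU_{\Gamma_{e_1}}$ already supplies it). One could instead run the argument entirely at the level of closure operators, but routing it through the join-preservation characterisation is cleaner and makes the role of $f$ transparent.
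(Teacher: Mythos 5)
Your proof is correct and follows essentially the same route as the paper: translate membership in $\cU_{\Gamma_{e_i}}$ into the statement that $e_i$ preserves $\bv S$, then push the equality $e_1(\bv S)=\bv e_1[S]$ through $f$ using $f\circ e_1=e_2$ and complete join-preservation. The only difference is that you spell out the verification of the characterisation $S\in\cU_{\Gamma_e}\iff e(\bv S)=\bv e[S]$, which the paper takes as the remark following Definition \ref{D:UG}; this is a harmless and accurate addition.
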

\begin{proof}
Let $S\subseteq P$. Then $S\in \cU_{\Gamma_{e_1}}$ if and only if $\bv S$ exists and $e_1(\bv S) = \bv e_1[S]$. Then, given $S\in \cU_{\Gamma_{e_1}}$,  since $f$ is completely join-preserving and $f\circ e_1 = e_2$, we have
\[e_2(\bv S)= f\circ e_1(\bv S) = \bv f\circ e_1[S] = \bv e_2[S].\]
So $S\in \cU_{\Gamma_{e_2}}$.
\end{proof}

\begin{cor}\label{C:contained}
 Let $\cU\in\JF$, let $\eta$ be the canonical map from $P$ to $\cI_\cU$, let $e:P\to L$ be an object of $\JC$, and let $\Gamma_e$ be the standard closure operator associated with $e:P\to L$. Then:
\begin{enumerate}[(1)]
\item Let $f:\cI_\cU\to L$ be a completely join-preserving map fixing $P$.  Then $\cU^+\subseteq\cU_{\Gamma_e}$.
\item Let $g:L\to \cI_\cU$ be a completely join-preserving map fixing $P$.  Then $\cU_{\Gamma_e}\subseteq \cU^+$.
\end{enumerate}
\end{cor}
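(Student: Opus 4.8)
The plan is to deduce both parts directly from Lemma \ref{L:contained}, once a single preliminary identification is made. That identification is: the standard closure operator associated with the canonical embedding $\eta:P\to\cI_\cU$ (in the sense of $\gamma$ from Proposition \ref{P:closure}) is exactly $\Gamma_\cU$. To see this, note that its closed sets are the sets $\eta^{-1}(C^\downarrow)$ for $C\in\cI_\cU$, and since $\eta(p)=\pd$ we have $\eta^{-1}(C^\downarrow)=\{p\in P:\pd\subseteq C\}=C$ (using that $C$ is a downset). So the closed sets of this operator are precisely the $\cU$-ideals, which are the $\Gamma_\cU$-closed sets by Proposition \ref{P:trans}; hence the two closure operators coincide and, by Definition \ref{D:plus}, the associated join-specification is $\cU_{\Gamma_\eta}=\cU_{\Gamma_\cU}=\cU^+$.

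For part (1), observe that $\eta:P\to\cI_\cU$ is a join-completion by Proposition \ref{P:pres}, hence an object of $\JC$, and $e:P\to L$ is an object of $\JC$ by hypothesis, while $f$ is by hypothesis a completely join-preserving map fixing $P$. Applying Lemma \ref{L:contained} with $e_1=\eta$, $L_1=\cI_\cU$, $e_2=e$, $L_2=L$ and this $f$ gives $\cU_{\Gamma_\eta}\subseteq\cU_{\Gamma_e}$, which by the preliminary remark is exactly $\cU^+\subseteq\cU_{\Gamma_e}$. For part (2), apply Lemma \ref{L:contained} in the reverse direction, with $e_1=e$, $L_1=L$, $e_2=\eta$, $L_2=\cI_\cU$ and the given map $g$, to obtain $\cU_{\Gamma_e}\subseteq\cU_{\Gamma_\eta}=\cU^+$.

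There is essentially no genuine obstacle here: the content of the corollary is just a two-directional invocation of the previous lemma, and the only thing requiring a moment's care is verifying $\Gamma_\eta=\Gamma_\cU$ (equivalently $\cU_{\Gamma_\eta}=\cU^+$) so that the lemma's conclusion can be read off in the stated form. Note also that frame-generatedness of $\cU$ plays no role in the argument; the hypothesis $\cU\in\JF$ is present only because that is the setting in which the corollary will be used.
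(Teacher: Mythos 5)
Your proof is correct and follows the same route as the paper, which simply derives the corollary from Lemma \ref{L:contained} applied in both directions; your preliminary check that the closure operator associated with $\eta:P\to\cI_\cU$ is $\Gamma_\cU$, so that $\cU_{\Gamma_\eta}=\cU^+$, is exactly the identification the paper leaves implicit.
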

\begin{proof}
This follows easily from lemma \ref{L:contained}.
\end{proof}

\begin{defn}[$G_P$]
Given $e\in \JC$, define $G_P(e) = \cU_e^-$, where $\cU_e$ is short for $\cU_{\Gamma_e}$, and $\cU_{\Gamma_e}^-$ is defined as in definition \ref{D:core}.
\end{defn}

\begin{lemma}
$G_P$ defines a functor from $\JC$ to $\JF^+$.
\end{lemma}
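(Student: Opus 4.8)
The plan is to check the three requirements on a functor in turn, noting at the outset that both $\JC$ and $\JF^+$ are thin categories, which makes the last requirement essentially free. That $\JC$ is thin is standard: if $f:L_1\to L_2$ is completely join-preserving with $f\circ e_1 = e_2$, then since $e_1[P]$ is join-dense in $L_1$ and $f$ preserves joins, $f$ is determined by $e_1$ and $e_2$ (this is the uniqueness argument already used in proposition \ref{P:univ}); and $\JF^+$ is thin because it is a poset under inclusion.

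First I would confirm that $G_P$ lands on objects of $\JF^+$. For $e:P\to L$ in $\JC$, by definition \ref{D:core} the join-specification $\cU_e^- = G_P(e)$ is frame-generating, hence an object of $\JF$, and by lemma \ref{L:alwaysMax} it is maximal, hence an object of $\JF^+$. Next I would deal with arrows: an arrow $e_1\to e_2$ in $\JC$ is a completely join-preserving map $f:L_1\to L_2$ fixing $P$, and by lemma \ref{L:contained} it yields $\cU_{\Gamma_{e_1}}\subseteq\cU_{\Gamma_{e_2}}$, i.e. $\cU_{e_1}\subseteq\cU_{e_2}$. Since $\cU_{e_1}^-$ is frame-generating and $\cU_{e_1}^-\subseteq\cU_{e_1}\subseteq\cU_{e_2}$, it is a frame-generating join-specification contained in $\cU_{e_2}$; by proposition \ref{P:core}, $\cU_{e_2}^-$ is the largest such, so $\cU_{e_1}^-\subseteq\cU_{e_2}^-$. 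Thus $G_P$ sends the (unique) arrow $e_1\to e_2$ to the (unique) inclusion arrow $\cU_{e_1}^-\hookrightarrow\cU_{e_2}^-$ in $\JF^+$.

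Since both categories are thin, preservation of identities and of composition is automatic, so $G_P$ is a functor $\JC\to\JF^+$. I do not expect any real obstacle here: the only step that requires a genuine argument is the monotonicity of the operation $(-)^-$, and that follows purely formally from its characterization (proposition \ref{P:core}) as a largest element, together with transitivity of inclusion; the remaining work is just invoking lemmas \ref{L:alwaysMax} and \ref{L:contained} and the join-density of $e_1[P]$.
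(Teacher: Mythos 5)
Your proposal is correct and follows essentially the same route as the paper's proof: both rest on lemma \ref{L:contained} to get $\cU_{e_1}\subseteq\cU_{e_2}$, monotonicity of $(-)^-$ (which you justify explicitly via the ``largest frame-generating subset'' characterization of proposition \ref{P:core}, where the paper simply asserts it), and lemma \ref{L:alwaysMax} for maximality, with thinness disposing of identities and composition. No gaps.
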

\begin{proof}
    By lemma \ref{L:contained}, if $f:e_1\to e_2$ in $\JC$, then $\cU_{{e_1}}\subseteq \cU_{{e_2}}$, and
     so $\cU_{{e_1}}^-\subseteq \cU_{{e_2}}^-$. Moreover, as $\cU_e^-$ is maximal by lemma \ref{L:alwaysMax}, the result follows easily.
\end{proof}

\begin{thm}\label{T:term}
$F_P\dashv G_P$.
\end{thm}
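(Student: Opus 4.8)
The plan is to exhibit, for each $\cU\in\JF$ and each object $e:P\to L$ of $\JC$, a natural bijection between the hom-set $\JC(F_P\cU, e)$ --- i.e. completely join-preserving maps $\cI_\cU\to L$ fixing $P$, but note $F_P$ actually lands in $\Frm_P$ so we should read this as $\Frm_P(F_P\cU, e)$ --- wait, $G_P$ has codomain $\JC$, so the adjunction $F_P\dashv G_P$ must be understood with $F_P:\JF\to\JC$ by post-composing with the inclusion $\Frm_P\hookrightarrow\JC$; I would begin the proof by making this reading explicit. Then the claim to prove is that $\JC(\iota F_P\cU, e)\cong \JF(\cU, G_P e)$ naturally, where $G_P e = \cU_e^-$. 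Since $\JF$ is thin, the right-hand side is just the truth value of $\cU\subseteq \cU_e^-$, so it suffices to show: there exists a (necessarily unique) completely join-preserving map $\cI_\cU\to L$ fixing $P$ if and only if $\cU\subseteq \cU_e^-$.

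The key steps are as follows. First, for the forward direction, suppose $f:\cI_\cU\to L$ is completely join-preserving and fixes $P$. By Corollary \ref{C:contained}(1) we get $\cU^+\subseteq\cU_{\Gamma_e}=\cU_e$, hence $\cU\subseteq\cU_e$. But $\cU$ is frame-generating and contained in $\cU_e$, so by the defining property of $\cU_e^-$ (Proposition \ref{P:core}, Definition \ref{D:core}) as the \emph{largest} frame-generating join-specification inside $\cU_e$, we conclude $\cU\subseteq\cU_e^-$. Second, for the reverse direction, suppose $\cU\subseteq\cU_e^-\subseteq\cU_e=\cU_{\Gamma_e}$. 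Then $e:P\to L$ is a $\cU$-embedding (since $e$ preserves exactly the joins of sets in $\cU_{\Gamma_e}$, by the definition of $\cU_{\Gamma_e}$ via Definition \ref{D:UG}, and $\cU\subseteq\cU_{\Gamma_e}$). Now Proposition \ref{P:univ} applies directly: there is a unique completely join-preserving $h:\cI_\cU\to L$ fixing $P$ with respect to $e$ and $\eta$. Uniqueness in both directions is automatic because $\cI_\cU$ is a join-completion (any two maps fixing $P$ that are completely join-preserving agree), so the hom-set $\JC(\iota F_P\cU, e)$ has at most one element, matching the thinness of $\JF$.

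Finally, naturality: since both $\JF$ and $\JC$ are thin (indeed $\JF$ is a poset and the relevant hom-sets in $\JC$ between these objects have at most one element), every naturality square commutes automatically once the bijection of hom-sets is established for each pair of objects. So I would just remark that naturality is trivial by thinness, after checking the hom-set bijection is compatible with composition, which again is forced. One should also confirm that $G_P e = \cU_e^-$ is indeed an object of $\JF$ (it is, being frame-generating by construction) and in fact of $\JF^+$ (by Lemma \ref{L:alwaysMax}), but for the $F_P\dashv G_P$ statement with $F_P:\JF\to\JC$ we only need it in $\JF$; the refinement to $\JF^+$ is what makes $G_P$ a functor to $\JF^+$ as already established.

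The main obstacle --- really more of a bookkeeping subtlety than a genuine difficulty --- is pinning down precisely which categories the adjunction is stated between, since $F_P$ was defined as a functor $\JF\to\Frm_P$ while $G_P$ was defined with codomain $\JF^+$ (or $\JC$ as its domain), so the statement $F_P\dashv G_P$ requires composing with the inclusions $\Frm_P\hookrightarrow\JC$ and $\JF^+\hookrightarrow\JF$ and checking these don't disturb the hom-set bijection; once the reading is fixed the mathematical content is entirely supplied by Corollary \ref{C:contained}(1), Proposition \ref{P:univ}, and Proposition \ref{P:core}.
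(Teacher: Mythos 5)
Your proposal is correct and takes essentially the same route as the paper: the paper verifies that $(\cU_e^-,\phi)$ is terminal in each comma category $(F_P\downarrow e)$, which for these thin categories is precisely your hom-set biconditional, and both arguments rest on the same ingredients --- corollary \ref{C:contained}(1) together with the maximality of $\cU_e^-$ (definition \ref{D:core}) for one direction, and the universal property (your proposition \ref{P:univ}, the paper's proposition \ref{P:arrow} with lemma \ref{L:init}) for the other. Your explicit bookkeeping about composing with the inclusions $\Frm_P\hookrightarrow\JC$ and $\JF^+\hookrightarrow\JF$ is consistent with how the paper handles this in corollary \ref{C:adj}.
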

\begin{proof}
Let $e:P\to L$ be an object of $\JC$. Let $(F_P\downarrow  e)$ be the comma category, where $e$ here is the constant functor.

Let $\cU_e = \cU_{\Gamma_e}$, let $\cU_e^-$ be as in definition \ref{D:core}, and let $\bar \eta$ be the canonical map from $P$ into $\cI_{\cU_e^-}$. Consider the map that takes $S\in\cI_{\cU_e^-}$ to $\Gamma_e(S)$. Since $\cU_e^-\subseteq \cU_e$ by definition, it follows from proposition \ref{P:arrow} that
this defines a completely join-preserving map $\phi:\cI_{\cU_e^-}\to L$ with $\phi\circ\bar{\eta} = e$.

We will show that $(F_P\downarrow e)$ has as terminal object the pair $(\cU_e^-, \phi)$. The result will then follows (see e.g. \cite[corollary 2.3.7]{Lein14} and take the dual).

 Objects of $(F_P\downarrow e)$ are pairs $(\cU,h)$ such that $\cU$ is a frame-generating join-specification, $h$ is a completely join-preserving map,
 and the diagram in figure \ref{F:comma1} commutes. An arrow from $(\cU_1,h_1)$ to $(\cU_2,h_2)$ in $(F_P\downarrow e)$
 is a commuting diagram as in figure \ref{F:comma3}, where the map from $\cI_{\cU_1}$ to $\cI_{\cU_2}$ is induced by the
 inclusion of $\cU_1$ into $\cU_2$ and lemma \ref{L:init}. As
 $\JC$ is a thin category, commutativity is guaranteed, so such a diagram exists if and only if $\cU_1\subseteq \cU_2$.

To see that $(\cU_e^-, \phi)$ is the terminal object as claimed, let $(\cU, h)$ be an object of $(F_P\downarrow e)$. Then, by corollary \ref{C:contained}, we must have $\cU^+\subseteq \cU_e$, and thus, as $\cU$ is frame-generating, we consequently have $\cU\subseteq \cU_e^-$. By lemma \ref{L:init}, this inclusion induces a frame morphism from $\cI_\cU$ to $\cI_{\cU_e^-}$ such that the diagram in figure \ref{F:comma2} commutes.
Thus there is a map from $(\cU, h)$ to $(\cU_e^-, \phi)$ in $(F_P\downarrow e)$. As $\JF$ is thin, this map must be unique, and so $(\cU_e^-, \phi)$ is terminal, as claimed.

\end{proof}

\begin{figure}
\begin{minipage}[b]{0.32\textwidth}
\centering\scalebox{1}{
\xymatrix{
\cI_\cU\ar[r]^h & L \\
P\ar[ur]_e\ar[u]^\eta
}}
\caption{}
\label{F:comma1}
\end{minipage}
\hfill
\begin{minipage}[b]{0.32\textwidth}
\centering\scalebox{1}{
\xymatrix{
\cI_{\cU_1}\ar[r]^{h_1}\ar@/^1.5pc/[rr] & L & \cI_{\cU_2}\ar[l]_{h_2} \\
& P\ar[u]_e\ar[ur]_{\eta_2}\ar[ul]^{\eta_1}
}}
\caption{}
\label{F:comma3}
\end{minipage}
\hfill
\begin{minipage}[b]{0.32\textwidth}
\centering\scalebox{1}{
\xymatrix{
\cI_\cU\ar[r]^h\ar@/^1.5pc/[rr] & L & \cI_{\cU_e^-}\ar[l]_{\phi} \\
& P\ar[u]_e\ar[ur]_{\bar\eta}\ar[ul]^{\eta}
}}
\caption{}
\label{F:comma2}
\end{minipage}
\end{figure}

\begin{cor}\mbox{}\label{C:adj}
Let $\iota$ be the inclusion of $\JF^+$ into $\JF$, and let $G'_P$ be the restriction of $G_P$ to $\Frm_P$. Then $F_P\circ \iota\dashv G_P$, $F_P\dashv G'_P$ and  $F_P\circ \iota\dashv G'_P$.
\end{cor}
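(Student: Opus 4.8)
The plan is to derive all three adjunctions from Theorem \ref{T:term} ($F_P\dashv G_P$) and Corollary \ref{C:ref} ($M\dashv \iota$, where $M(\cU)=\cU^+$), together with the standard fact that adjunctions compose and that the composite of right adjoints is a right adjoint for the composite of left adjoints.

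First I would handle $F_P\circ\iota\dashv G_P$. We have $\iota:\JF^+\to\JF$ is left adjoint to $M$ by Corollary \ref{C:ref}, and $F_P:\JF\to\JC$ is left adjoint to $G_P$ by Theorem \ref{T:term}. However, these do not compose in the right order directly (the left adjoint $\iota$ goes $\JF^+\to\JF$ and $F_P$ goes $\JF\to\JC$, so $F_P\circ\iota:\JF^+\to\JC$ is a composite of left adjoints read in the usual order). Hence $F_P\circ\iota$ has right adjoint $M\circ G_P$. But $G_P$ already lands in $\JF^+$ (its codomain is $\JF^+$ by the lemma preceding Theorem \ref{T:term}), so $M\circ G_P = G_P$, since $M$ restricted to $\JF^+$ is the identity (for maximal $\cU$, $\cU^+=\cU$). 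Thus $F_P\circ\iota\dashv G_P$. I would phrase this carefully to make the identification $M\circ G_P=G_P$ explicit, as it is the one genuinely content-bearing step.

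Next, $F_P\dashv G'_P$ where $G'_P$ is the restriction of $G_P$ to the subcategory $\Frm_P$ of $\JC$. Since $\Frm_P$ contains the image of $F_P$ (every $\cI_\cU$ for $\cU$ frame-generating is a frame), and the hom-sets $\JC(F_P\cU, e)$ for $e$ an object of $\Frm_P$ coincide with $\Frm_P(F_P\cU, e)$ (as $\Frm_P$ need not be full in $\JC$, one must check that every completely join-preserving map fixing $P$ between two frames that are join-completions is automatically a frame morphism — this holds because such a map is onto and is a completely join-preserving lattice homomorphism once one observes it preserves finite meets, which follows from Proposition \ref{P:arrow} applied with both lattices frames). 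Given this, the universal property witnessing $F_P\dashv G_P$ restricts to give $F_P\dashv G'_P$. Then $F_P\circ\iota\dashv G'_P$ follows by composing $\iota\dashv M$ with $F_P\dashv G'_P$ exactly as in the first case, again using $M\circ G'_P=G'_P$.

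The main obstacle is the bookkeeping around which category the right adjoint lands in, and in particular verifying that restricting the codomain of $G_P$ from $\JC$ to $\Frm_P$ is legitimate, i.e. that $G'_P$ is well-defined as a functor $\Frm_P\to\JF^+$ and that the relevant hom-sets agree. Concretely, one must confirm that for an object $e:P\to L$ of $\Frm_P$, the terminal object $(\cU_e^-,\phi)$ of the comma category constructed in the proof of Theorem \ref{T:term} still works when arrows are required to be frame morphisms rather than merely completely join-preserving maps; this reduces to the observation, already implicit in Proposition \ref{P:arrow} and the discussion around the definition of $\Frm_P$, that a completely join-preserving surjection between frames that are join-completions of $P$ and fixes $P$ is automatically a frame morphism. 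Once that is in hand, everything else is a formal consequence of composing adjunctions.
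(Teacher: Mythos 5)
There is a genuine error in your derivation of $F_P\circ\iota\dashv G_P$ (and hence also of $F_P\circ\iota\dashv G'_P$): you invoke Corollary \ref{C:ref} as saying that $\iota$ is left adjoint to $M$, but that corollary states $M\dashv\iota$, i.e.\ $M$ is the \emph{left} adjoint and $\iota$ the \emph{right} adjoint ($\JF^+$ is a reflective subcategory of $\JF$). The adjunction $\iota\dashv M$ that your composition argument needs is in fact false in general: it would assert, for $\cU_2\in\JF^+$ and $\cU_1\in\JF$, that $\cU_2\subseteq\cU_1^+\implies\cU_2\subseteq\cU_1$, and taking $\cU_2=B_P^+$, $\cU_1=B_P$ with $B_P\neq B_P^+$ gives a counterexample. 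More structurally, $\iota$ cannot be a left adjoint at all in general, since by Corollary \ref{C:inc} it fails to preserve colimits (already the initial object, $B_P$ versus $B_P^+$), whereas left adjoints must. So the step ``$F_P\circ\iota$ has right adjoint $M\circ G_P$ by composing adjunctions'' is unjustified, even though your subsequent observation $M\circ G_P=G_P$ is correct.

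The conclusion is nonetheless true, and the repair is simpler than your route: no composition of adjunctions is needed. Since all the categories involved are thin and $\JF^+$ is a \emph{full} subcategory of $\JF$, and since $G_P(e)=\cU_e^-$ is always maximal (Lemma \ref{L:alwaysMax}), the natural bijection witnessing $F_P\dashv G_P$ restricts verbatim to $\cU\in\JF^+$: a map $F_P(\iota\,\cU)\to e$ exists iff $\cU\subseteq\cU_e^-$, and the latter is a hom in $\JF^+$. Equivalently, and this is exactly the paper's argument, the terminal object $(\cU_e^-,\phi)$ of the comma category constructed in the proof of Theorem \ref{T:term} already lies over $\JF^+$, so the same comma-category argument gives $F_P\circ\iota\dashv G_P$. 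Your treatment of $F_P\dashv G'_P$ is essentially sound and matches the paper's in content: the needed point is that a completely join-preserving map fixing $P$ whose codomain is a frame automatically preserves finite meets by Proposition \ref{P:arrow}, so the $\JC$-homs and $\Frm_P$-homs out of $F_P(\cU)$ agree; the paper phrases this as rerunning the proof of Theorem \ref{T:term} with ``frame morphism'' in place of ``completely join-preserving map''. Once the first adjunction is repaired as above, $F_P\circ\iota\dashv G'_P$ follows by the same restriction argument rather than by composing with the nonexistent adjunction $\iota\dashv M$.
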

\begin{proof}
That $F_P\circ \iota\dashv G_P$ follows from the proof of theorem \ref{T:term} by noting that $\cU_e^-$ is always maximal, by lemma \ref{L:alwaysMax}. The proofs for the adjunctions involving $G'_P$ are essentially identical to the proofs for the adjunctions for $G_P$. The only difference being that the phrase `completely join-preserving map' must be replaced by `frame morphism' throughout.
\end{proof}

\begin{thm}\label{T:pres}
Let $\iota$ be the inclusion of $\JF^+$ into $\JF$. Then:
\begin{enumerate}[(1)]
\item $F_P$ and $F_P\circ\iota$ preserve colimits. \label{pres3}
\item $F_P\circ \iota$ preserves non-empty limits, but may not preserve terminal objects.\label{pres1}
\item $F_P$ may not preserve even finite non-empty limits.\label{pres2}
\end{enumerate}
\end{thm}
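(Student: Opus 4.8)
The plan is to prove the three parts of Theorem~\ref{T:pres} by leveraging the adjunctions established in Corollary~\ref{C:adj} together with the structural results from Theorems~\ref{T:lims} and~\ref{T:comps}.

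\textbf{Part (\ref{pres3}).} Since $F_P$ has a right adjoint $G_P$ (Theorem~\ref{T:term}), it preserves all colimits automatically. Likewise $F_P\circ\iota$ has a right adjoint by Corollary~\ref{C:adj}, so it too preserves colimits. I would spell this out by recalling that the colimit of a family $\{\cU_i\}$ in $\JF$ is $\bigcup_I\cU_i$ (Theorem~\ref{T:comps}(\ref{compsJF})), and that $F_P$ sends this to $\eta:P\to\cI_{\bigcup_I\cU_i}$, which is indeed the colimit in $\Frm_P$ of the $\eta:P\to\cI_{\cU_i}$; but strictly the abstract adjunction argument suffices.

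\textbf{Part (\ref{pres1}).} For preservation of non-empty limits by $F_P\circ\iota$: the limit in $\JF^+$ of a non-empty family $\{\cU_i\}$ is $\bigcap_I\cU_i$ (Theorem~\ref{T:comps}(\ref{compsJFP}), using Theorem~\ref{T:lims}(\ref{limsI})), so I must check that $\eta:P\to\cI_{\bigcap_I\cU_i}$ is the limit in $\Frm_P$ of the diagram $\{\eta:P\to\cI_{\cU_i}\}$ with the maps $\phi$ induced by inclusions. A cone over this diagram with vertex a join-completion $d:P\to D$ consists of frame morphisms $d_i:\cI_{\cU_i}\to D$ fixing $P$; by Corollary~\ref{C:contained}(1) each forces $\cU_i^+=\cU_i\subseteq\cU_{\Gamma_d}$, hence $\bigcap_I\cU_i\subseteq\cU_{\Gamma_d}$, and then $\bigcap_I\cU_i\subseteq(\cU_{\Gamma_d})^-$; applying Lemma~\ref{L:init} (or Proposition~\ref{P:arrow}) gives the unique mediating frame morphism $\cI_{\bigcap_I\cU_i}\to D$. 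Uniqueness is automatic since $\cI_{\bigcap_I\cU_i}$ is a join-completion. For the failure to preserve terminal objects: the terminal object of $\JF^+$ is $(\bigcup_{\cU\in\JF}\cU)^+$, whereas the terminal object of $\Frm_P$ would have to be join-generated by $P$ with every such frame mapping into it --- but Example~\ref{E:notMod} shows there need be no largest frame-generating join-specification compatible with a given frame-completion, and indeed $\Frm_P$ generally has no terminal object, so $F_P\circ\iota$ cannot preserve one. I would phrase this by exhibiting (or pointing to) a $P$ for which $\Frm_P$ has no terminal object while $\JF^+$ does.

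\textbf{Part (\ref{pres2}).} This should follow from Example~\ref{E:strict}: there $\cU_1,\cU_2\in\JF$ but $\cU_1\cap\cU_2\notin\JF$, so the limit of $\{\cU_1,\cU_2\}$ in $\JF$ is $(\cU_1\cap\cU_2)^-$, which is strictly smaller than $\cU_1\cap\cU_2$. Meanwhile the limit in $\Frm_P$ of $\eta:P\to\cI_{\cU_1}$ and $\eta:P\to\cI_{\cU_2}$ (if it exists) would correspond, via the argument above, to the frame-completion cut out by $\cU_1\cap\cU_2$-preservation --- and one checks that $F_P((\cU_1\cap\cU_2)^-)=\eta:P\to\cI_{(\cU_1\cap\cU_2)^-}$ is \emph{not} this limit, because the mediating-cone argument produces a cone whose vertex is $\cI_{(\cU_1\cap\cU_2)^-}$ but which fails the universal property against, e.g., a cone built from a frame-completion realizing $\cU_1\cap\cU_2$-joins that is not join-generated by the $(\cU_1\cap\cU_2)^-$-ideals. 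Concretely I would take the pullback of $\cI_{\cU_1}\to\cA(P)\leftarrow\cI_{\cU_2}$ in the category of all join-completions $\JC$ and observe it is not a frame, so the limit in $\Frm_P$, if it exists, differs from what $F_P$ delivers; comparing cardinalities or the modularity failure as in Example~\ref{E:notMod} pins down the discrepancy.

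\textbf{Main obstacle.} The delicate point is part (\ref{pres2}): "may not preserve" requires actually producing a diagram in $\JF$ whose $F_P$-image fails the limit universal property in $\Frm_P$, and this needs a careful analysis of what the limit in $\Frm_P$ is (or whether it exists) for the relevant small diagram coming from Example~\ref{E:strict}. I expect the cleanest route is to show that the candidate limiting cone $F_P((\cU_1\cap\cU_2)^-)$ admits a cone over the same diagram that does \emph{not} factor through it --- namely one whose vertex records $\cU_1\cap\cU_2$-joins genuinely (which exists since $\cU_1\cap\cU_2$ is a join-specification and $\cI_{\cU_1\cap\cU_2}$, though not a frame, still maps compatibly into $\cI_{\cU_1}$ and $\cI_{\cU_2}$) but whose underlying completion is not a frame, so it is not even an object of $\Frm_P$; the subtlety is then checking that $\Frm_P$ does admit the limit but that it is strictly larger than $F_P((\cU_1\cap\cU_2)^-)$, for which I would use Corollary~\ref{C:contained} and a direct inspection of the poset in Figure~\ref{F:strict}.
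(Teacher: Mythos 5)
Part (\ref{pres3}) of your proposal matches the paper (both use the adjunctions of corollary \ref{C:adj} and preservation of colimits by left adjoints). The problems start with the positive claim in part (\ref{pres1}): your cone points the wrong way. A cone in $\Frm_P$ over the diagram $\{\eta:P\to\cI_{\cU_i}\}$ with vertex $d:P\to D$ consists of frame morphisms $D\to \cI_{\cU_i}$ fixing $P$, not morphisms $\cI_{\cU_i}\to D$. What you verify --- that $\cI_{\bigcap_I\cU_i}$ admits a map into any $D$ that receives maps from all the $\cI_{\cU_i}$ --- is a cocone-style condition, and since $\Frm_P$ is thin it is vacuous: any lower bound of the family sits below every upper bound simply by composing $\cI_{\bigcap_I\cU_i}\to\cI_{\cU_i}\to D$. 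It says nothing about the limit. The correct argument (the paper's) starts from an arbitrary family $f_i:D\to\cI_{\cU_i}$ fixing $P$, uses part (2) of corollary \ref{C:contained} (not part (1)) to get $\cU_{\Gamma_d}\subseteq\cU_i^+=\cU_i$ for all $i$, hence $\cU_{\Gamma_d}\subseteq\bigcap_I\cU_i$, and then builds the mediating morphism $D\to\cI_{\bigcap_I\cU_i}$ by identifying $D$ with the lattice of $\Gamma_d$-closed sets and invoking proposition \ref{P:arrow}. For the failure to preserve terminal objects, your assertion that $\Frm_P$ ``generally has no terminal object'' is neither proved nor needed: the paper takes $P$ and $L$ from example \ref{E:notMod} and observes that a frame morphism $L\to\cI_\cU$ fixing $P$, where $\cU$ is the top of $\JF^+$, would force $\cU_\phi\subseteq\cU^+=\cU$ by corollary \ref{C:contained}, contradicting $\cU_\phi\notin\JF$; so $F_P(\cU)$ is simply not terminal, whatever $\Frm_P$ may or may not have.

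For part (\ref{pres2}) you correctly reach for example \ref{E:strict} and identify the limit in $\JF$ as $(\cU_1\cap\cU_2)^-$, but, as you yourself flag, the counterexample cone is never produced. Your candidate vertex $\cI_{\cU_1\cap\cU_2}$ is not a frame, so it is not an object of $\Frm_P$ and cannot witness anything there, and the proposed ``pullback of $\cI_{\cU_1}\to\cA(P)\leftarrow\cI_{\cU_2}$'' rests on morphisms that do not exist in $\JC$: completely join-preserving maps fixing $P$ run from $\cA(P)$ to the $\cI_{\cU_i}$, not conversely. The missing idea is to take $\cV=\cU_1^+\cap\cU_2^+$, which lies in $\JF^+$ by theorem \ref{T:lims}(\ref{limsI}); then $\eta:P\to\cI_\cV$ is an object of $\Frm_P$, lemma \ref{L:init} supplies frame morphisms $\cI_\cV\to\cI_{\cU_1}$ and $\cI_\cV\to\cI_{\cU_2}$ forming a cone over the image diagram, and since $((\cU_1\cap\cU_2)^-)^+=(\cU_1\cap\cU_2)^-\subset\cV$, corollary \ref{C:contained} rules out any morphism $\cI_\cV\to\cI_{(\cU_1\cap\cU_2)^-}$ in $\Frm_P$. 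Hence this cone cannot factor through $F_P((\cU_1\cap\cU_2)^-)$, and one never needs to decide whether, or what, the limit in $\Frm_P$ actually is.
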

\begin{proof}\mbox{}
\begin{enumerate}[(1)]
\item This follows immediately from corollary \ref{C:adj} and the fact that left adjoints preserve small colimits.
\item Let $I$ be an indexing set, and let $\cU_i\in \JF^+$ for all $i\in I$. Then, by theorem \ref{T:lims}(\ref{limsI}), the limit induced by $\{\cU_i:i\in I\}$ is $\bigcap_I\cU_i$, and this is preserved by
$\iota$, by corollary \ref{C:inc}. By definition, $F_P(\bigcap_I\cU_i)=\cI_{\bigcap_I\cU_i}$, and, similarly,
$F_P(\cU_i)=\cI_{\cU_i}$ for all $i\in I$. For each $i\in I$, let $\phi_i$ be the map from $\cI_{\bigcap_I \cU_i}$ to
$\cI_{\cU_i}$ induced by inclusion as in lemma \ref{L:init}. By this lemma, $\cI_{\bigcap_I \cU_i}$ and the $\phi_i$ maps induce a cone for the diagram associated with
 $\{\cI_{\cU_i}:i\in I\}$ in $\Frm_P$. Let $e:P\to L$ be an object of $\Frm_P$, and suppose for each $i\in I$ there is
 $f_i:L\to\cI_{\cU_i}$ fixing $P$, such that $L$ and the $f_i$ maps induce another such cone in $\Frm_P$. Note that we are abusing our notation here by referring to frames and frame morphisms rather than embeddings and commuting diagrams, but hopefully the meaning is clear.

For convenience we will identify $L$ with the lattice of $\Gamma_e$-closed subsets of $P$, and $e$ with the associated
 canonical embedding $p\mapsto \pd$. From the fact that each $f_i$ is a completely join-preserving and fixes $P$, we see that, thinking of $L$ as the
 lattice of $\Gamma_e$-closed sets, the maps $f_i$ are defined by $f_i(C)=\Gamma_{\cU_i}(C)$ for all $C\in L$. We also deduce that $\Gamma_e\leq \Gamma_{\cU_i}$ for all $i\in I$.

 By proposition \ref{P:arrow} there is a unique
 frame morphism $\phi: L\to \cI_{\bigcap_I \cU_i}$ fixing $P$.  From the definitions of the various morphisms involved
 we see that $f_i=\phi_i\circ \phi$ for all $i\in I$. Thus $\cI_{\bigcap_I\cU_i}$ and the $\phi_i$ maps induce the limit of the diagram
 associated with $\{\cI_{\cU_i}:i\in I\}$ in $\Frm_P$ as required.

To see that $F_P\circ \iota$ may not preserve terminal objects, let $P$ and $L$ be as in example \ref{E:notMod}, let
 $\JF^+$ be the category of maximal frame-generating join-specifications for $P$, and let $\cU$ be the terminal object
 of $\JF^+$, which must exist by theorem \ref{T:comps}. Let $\cU_\phi$ be as in example \ref{E:notMod}. Then, as
 $\cU_\phi$ is the largest possible join-specification for $P$ we must have $\cU\subseteq \cU_\phi$, and this inclusion must be strict as $\cU_\phi$ was shown to not be in $\JF$. So, by corollary \ref{C:contained}, there is no frame morphism from $L$ to $\cI_\cU$, and so $F_P(\cU)$ is not the terminal object of $\Frm_P$.
\item
To see that $F_P$ need not preserve finite non-empty limits we return to example \ref{E:strict}. Here $\cU_1$ and $\cU_2$ are frame-generating, but $\cU_1\cap\cU_2$ is not. So, as in the proof of theorem \ref{T:comps}(\ref{compsJF}), the limit of $\cU_1$ and $\cU_2$ is $(\cU_1\cap \cU_2)^-$, which in this case is $\{\{a,b\},\{b,c\},\{c,d\}\}$.

Consider $\cV = \cU_1^+ \cap \cU_2^+$. By theorem \ref{T:lims}(\ref{limsI}), we have $\cV \in \JF^+$, and thus
 $\eta:P\to\cI_\cV\in\Frm_P$. Moreover, by lemma \ref{L:init}, for each $i\in\{1,2\}$ there is a frame morphism $p_i:\cI_\cV \to \cI_{\cU_i}$ fixing $P$, and such that $\cI_\cV$ and the $p_i$ maps induces a cone for the relevant diagram.
However, in this case we have
\[((\cU_1\cap \cU_2)^-)^+ = (\cU_1\cap \cU_2)^-\subset \cU_1^+\cap\cU_2^+ = \cV.\]
So there is no map from $\eta:P\to\cI_\cV$ to $\eta':P\to \cI_{(\cU_1\cap\cU_2)^-}$ in $\Frm_P$, by corollary \ref{C:contained}, and thus $F_P$ does not preserve the limit corresponding to $\cU_1\cap \cU_2$.

\end{enumerate}
\end{proof}

\begin{cor}\label{C:pres}
$F_P$ and $F_P\circ\iota$ may not have left adjoints.
\end{cor}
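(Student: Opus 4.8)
The plan is to appeal to the standard fact that any functor admitting a left adjoint is itself a right adjoint, and right adjoints preserve all limits that exist in their domain (see e.g. \cite{Lein14}). Thus it is enough to point to limits that $F_P$ and $F_P\circ\iota$ fail to preserve, and these have already been produced in Theorem \ref{T:pres}. Note that the phrase `may not have a left adjoint' is to be read as `there is a poset $P$ for which the corresponding functor has no left adjoint', so all that is needed is a single witnessing $P$ in each case.

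For $F_P$, I would take $P$ to be the poset of example \ref{E:strict}. By Theorem \ref{T:pres}(\ref{pres2}), the functor $F_P$ does not preserve the limit in $\JF$ of the diagram associated with $\{\cU_1,\cU_2\}$ from that example, which is a finite non-empty limit. If $F_P$ had a left adjoint it would be a right adjoint and would therefore preserve this limit, a contradiction. Hence $F_P$ has no left adjoint for this $P$.

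For $F_P\circ\iota\colon\JF^+\to\Frm_P$, I would take $P$ to be the poset of example \ref{E:notMod}. The category $\JF^+$ has a terminal object by Theorem \ref{T:comps}(\ref{compsJFP}), and by Theorem \ref{T:pres}(\ref{pres1}) the functor $F_P\circ\iota$ does not send it to the terminal object of $\Frm_P$, i.e. $F_P\circ\iota$ fails to preserve the limit of the empty diagram. Again, were $F_P\circ\iota$ to possess a left adjoint it would preserve this limit, which is impossible; so $F_P\circ\iota$ has no left adjoint for this $P$.

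There is essentially no obstacle here beyond the bookkeeping just described: the real content is entirely contained in Theorem \ref{T:pres}, and the corollary is simply its contrapositive combined with the preservation properties of adjoint functors.
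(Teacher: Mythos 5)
Your proof is correct and is essentially the paper's own argument: the paper also deduces the corollary directly from Theorem \ref{T:pres} together with the fact that a functor with a left adjoint is a right adjoint and must preserve small limits. Your spelling out of the witnessing posets (examples \ref{E:strict} and \ref{E:notMod}) and the specific non-preserved limits is just a more explicit rendering of the same one-line deduction.
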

\begin{proof}
This follows immediately from theorem \ref{T:pres} and the fact that right adjoints must preserve small limits.
\end{proof}

\section{An alternative approach}\label{S:alt}

We can also categorify the class of frame-completions for $P$ in a different way. Given $P$ and join-specifications $\cU_1$ and $\cU_2$ with
 $\cU_1\subseteq \cU_2$, the completion $\cI_{\cU_2}$ is a sublattice of $\cI_{\cU_2}$, as $\cU_2$-ideals are also $\cU_1$-ideals. Moreover, the inclusion, $i$, of $\cI_{\cU_2}$ into $\cI_{\cU_1}$ preserves arbitrary meets (intersections), including the empty meet (i.e. the top element, the ideal which is all of $P$).
If $\phi:\cI_{\cU_1}\to \cI_{\cU_2}$ is the induced frame morphism from lemma \ref{L:init}, then $\phi\circ i$ is the identity map on $\cI_{\cU_2}$, but it is not the case in general that $i\circ \phi$ is the identity on $\cI_{\cU_1}$.

Define $\Frm_{\overleftarrow{P}}$ to be the category whose objects are the same as those of $\Frm_P$, and whose arrows are order embeddings fixing $P$ (these will necessarily be completely meet-preserving - see lemma \ref{L:induced}). We can now ask similar questions about functors from $\JF$ and $\JF^+$ into $\Frm_{\overleftarrow{P}}$ as we asked for $\Frm_P$ in the preceding sections. Fortunately, we can answer all these questions very simply, as $\Frm_{\overleftarrow{P}}$ is just the opposite category to $\Frm_P$.

This duality is not new, and appears, at least implicitly in a number of places in the literature. Nevertheless, we will take a moment to articulate it precisely.

\begin{lemma}\label{L:same}
Let $\Gamma_1$ and $\Gamma_2$ be standard closure operators on $P$. Then $\Gamma_1\leq \Gamma_2$ if and only if every $\Gamma_2$-closed set is $\Gamma_1$-closed.
\end{lemma}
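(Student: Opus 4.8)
The plan is to prove the two implications directly from the defining properties of a closure operator (extensivity, monotonicity, idempotence), together with the definition of the pointwise order $\leq$ on standard closure operators. Neither direction requires transfinite recursion or anything specific to \emph{standard} closure operators; the word ``standard'' plays no role here, so I would not mention it in the argument.

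For the forward direction, I would assume $\Gamma_1\leq\Gamma_2$, so that $\Gamma_1(S)\subseteq\Gamma_2(S)$ for every $S\subseteq P$. Given a $\Gamma_2$-closed set $C$, i.e.\ $\Gamma_2(C)=C$, I would combine this with $\Gamma_1(C)\subseteq\Gamma_2(C)=C$ and with extensivity of $\Gamma_1$ (which gives $C\subseteq\Gamma_1(C)$) to conclude $\Gamma_1(C)=C$, i.e.\ $C$ is $\Gamma_1$-closed.

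For the converse, I would assume every $\Gamma_2$-closed set is $\Gamma_1$-closed and fix an arbitrary $S\subseteq P$. Since $\Gamma_2$ is idempotent, $\Gamma_2(S)$ is $\Gamma_2$-closed, hence $\Gamma_1$-closed by hypothesis, so $\Gamma_1(\Gamma_2(S))=\Gamma_2(S)$. Extensivity of $\Gamma_2$ gives $S\subseteq\Gamma_2(S)$, and then monotonicity of $\Gamma_1$ yields $\Gamma_1(S)\subseteq\Gamma_1(\Gamma_2(S))=\Gamma_2(S)$. As $S$ was arbitrary, $\Gamma_1\leq\Gamma_2$.

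There is no real obstacle here: the statement is a routine manipulation of closure-operator axioms, and the only point worth a moment's care is making sure extensivity is invoked on the correct operator in each direction (on $\Gamma_1$ in the forward direction, on $\Gamma_2$ in the converse). I would present it as a short, self-contained paragraph rather than a multi-step argument.
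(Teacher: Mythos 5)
Your proposal is correct and follows essentially the same argument as the paper: both directions are the same routine applications of extensivity, monotonicity, and idempotence (with the converse using that $\Gamma_2(S)$ is $\Gamma_2$-closed, hence $\Gamma_1$-closed). Nothing further is needed.
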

\begin{proof}
First suppose $\Gamma_1\leq\Gamma_2$, and let $C$ be $\Gamma_2$-closed. Then
\[C\subseteq \Gamma_1(C) \subseteq \Gamma_2(C) = C,\]
so $C$ is $\Gamma_1$-closed.

Conversely, suppose every $\Gamma_2$-closed set is $\Gamma_1$-closed, and let $S\subseteq P$. Then \[\Gamma_1(S)\subseteq \Gamma_1(\Gamma_2(S))=\Gamma_2(S),\]
so $\Gamma_1\leq\Gamma_2$.
\end{proof}

\begin{lemma}\label{L:induced}
Let $e_1:P\to L_1$ and $e_2:P\to L_2$ be join-completions, and let $g:L_2\to L_1$ be an order embedding that fixes $P$. Then:
\begin{enumerate}[(1)]
    \item If $\Gamma_1$ and $\Gamma_2$ are the closure operators associated with $e_1$ and $e_2$ respectively, and if $L_1'$ and $L_2'$ are the corresponding lattices of closed sets, then the map $g':L_2'\to L_1'$ such that the diagram in figure \ref{F:induced} commutes is given by $g'(C)=C$ for all $C\in L_2'$ (in this diagram the isomorphisms are the canonical ones).
    \item $g$ is right adjoint to the completely join-preserving map $f:L_1\to L_2$ corresponding to the map that takes $\Gamma_1$-closed sets to their $\Gamma_2$-closures (as in proposition \ref{P:arrow}).
\end{enumerate}
\end{lemma}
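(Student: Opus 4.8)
The plan is to realise $g'$ explicitly as the conjugate of $g$ under the canonical isomorphisms between a join-completion and its lattice of closed sets, deduce part (1) by a one-line computation, and then extract part (2) as a routine closure-operator adjunction.

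First I would recall the shape of figure \ref{F:induced}: for a join-completion $e:P\to L$ with associated standard closure operator $\Gamma$ and lattice of closed sets $L'$, the canonical isomorphism $L\to L'$ sends $x$ to $e^{-1}(x^\downarrow)$, with inverse sending a $\Gamma$-closed set $C$ to $\bv e[C]$; this is the content of the correspondence in proposition \ref{P:closure} (using join-density of $e[P]$ to see the two maps are mutually inverse). Since the two vertical arrows in figure \ref{F:induced} are these isomorphisms, $g'$ is forced to be the composite that sends $C\in L_2'$ first to $x:=\bv e_2[C]\in L_2$, then to $g(x)\in L_1$, then to $e_1^{-1}\big(g(x)^\downarrow\big)\in L_1'$.

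For part (1), fix $C\in L_2'$ and set $x=\bv e_2[C]$, so $C=e_2^{-1}(x^\downarrow)$. Then for $p\in P$ we have $p\in g'(C)$ iff $e_1(p)\leq g(x)$; because $g$ fixes $P$ this says $g(e_2(p))\leq g(x)$, and because $g$ is an order embedding this is equivalent to $e_2(p)\leq x$, i.e.\ to $p\in e_2^{-1}(x^\downarrow)=C$. Hence $g'(C)=C$. In particular every $\Gamma_2$-closed subset of $P$ is $\Gamma_1$-closed, so by lemma \ref{L:same} we get $\Gamma_1\leq\Gamma_2$. This inequality is exactly what proposition \ref{P:arrow} needs (with $\Gamma'=\Gamma_1$, $\Gamma=\Gamma_2$) in order for the completely join-preserving map $f$ of the statement to be well defined; on the closed-set models it is $f(C)=\Gamma_2(C)$.

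For part (2), identify $L_1$ with $L_1'$ and $L_2$ with $L_2'$; then $f$ is $C\mapsto\Gamma_2(C)$ and, by part (1), $g$ is just the inclusion $D\mapsto D$ of $L_2'$ into $L_1'$. For $C\in L_1'$ and $D\in L_2'$ one checks $\Gamma_2(C)\subseteq D \iff C\subseteq D$: the forward implication is extensivity of $\Gamma_2$, and the reverse uses monotonicity of $\Gamma_2$ together with $\Gamma_2(D)=D$. This is precisely $f\dashv g$, and since a right adjoint preserves all limits, $g$ is completely meet-preserving, which also justifies the parenthetical remark in the definition of $\Frm_{\overleftarrow{P}}$. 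I do not expect a real obstacle here; the only thing to be careful about is bookkeeping the directions of the canonical isomorphisms in figure \ref{F:induced}, and noting that the well-definedness of $f$ is not an extra hypothesis but a consequence of part (1) via lemma \ref{L:same}.
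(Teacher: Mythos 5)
Your proposal is correct and follows essentially the same route as the paper: part (1) is the same computation via the canonical isomorphisms using that $g$ fixes $P$ and is an order embedding, and part (2) deduces $\Gamma_1\leq\Gamma_2$ from part (1) via lemma \ref{L:same}, invokes proposition \ref{P:arrow} for $f$, and verifies the adjunction inequality on closed sets. Your form of that inequality, $\Gamma_2(C)\subseteq D\iff C\subseteq D$, is the correct one (the paper's proof writes $\Gamma_1$ there, evidently a slip, since sets in $L_1'$ are already $\Gamma_1$-closed).
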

\begin{proof}\mbox{}
\begin{enumerate}[(1)]
\item If $y\in L_2$ then the $\Gamma_2$-closed set associated with $y$ by the canonical isomorphism is $\{p:e_2(p)\leq y\}$, and the $\Gamma_1$-closed set associated with $g(y)$ is $\{p:e_1(p)\leq g(y)\}$. But $e_1$, $e_2$ and $g$ are all order embeddings, so
\[e_2(p)\leq y \iff g(e_2(p))\leq g(y) \iff e_1(p)\leq g(y),\]
as $g$ fixes $P$. Thus $\{p:e_2(p)\leq y\}=\{p:e_1(p)\leq g(y)\}$, and the result follows.
\item By part 1 and lemma \ref{L:same} we have $\Gamma_1\leq \Gamma_2$, and so $f$ exists by proposition \ref{P:arrow}. Moreover, given $S_1\in L_1'$ and $S_2\in L_2'$, we have
\[\Gamma_1(S_1)\subseteq S_2 \iff S_1\subseteq S_2,\]
and this amounts to saying that $f$ and $g$ form an adjoint pair, with $g$ as right adjoint.
\end{enumerate}
\end{proof}

\begin{figure}
\centering\scalebox{1}{
\xymatrix{
L_1\ar@{<->}[r]_\cong & L_1' \\
L_2\ar[u]^g\ar@{<->}[r]_\cong & L_2'\ar[u]_{g'}
}}
\caption{}
\label{F:induced}
\end{figure}

\begin{prop}\label{P:op}
Let $P$ be a poset, and let $\JCop$ be the category whose objects are join-completions, $e:P\to L$, of $P$, and whose arrows are (the commuting triangles induced by) completely meet-preserving order embeddings fixing $P$.
Let $\JC$ be as in definition \ref{D:JC}. Then $\JCop=\JC^{\textup{op}}$.
\end{prop}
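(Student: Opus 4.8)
The plan is to exhibit an isomorphism of categories $\Phi\colon\JCop\to\JC^{\textup{op}}$ which is the identity on objects and sends an arrow to its adjoint. Since both categories have exactly the join-completions of $P$ as objects (Definition \ref{D:JC} and the statement of Proposition \ref{P:op}), the object part is immediate; and since adjoints compose contravariantly and carry identities to identities, functoriality will cost only a sentence once the action on arrows is fixed, especially as both categories are thin. So the whole content is the bijection between $\textup{Hom}_{\JCop}(e_1,e_2)$ and $\textup{Hom}_{\JC}(e_2,e_1)$ for join-completions $e_1\colon P\to L_1$ and $e_2\colon P\to L_2$.

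One direction is handed to us by Lemma \ref{L:induced}. An arrow of $\JCop$ from $e_1$ to $e_2$ is a completely meet-preserving order embedding $g\colon L_1\to L_2$ fixing $P$, and part (2) of that lemma provides a completely join-preserving map $f\colon L_2\to L_1$ fixing $P$ with $f\dashv g$; this $f$ is precisely an arrow of $\JC$ from $e_2$ to $e_1$, i.e. an arrow of $\JC^{\textup{op}}$ from $e_1$ to $e_2$. Set $\Phi(g)=f$.

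For the converse, and for the fact that $\Phi$ is bijective on arrows, I would pass to the closure-operator description of Propositions \ref{P:closure} and \ref{P:arrow}. Given a completely join-preserving $f\colon L_2\to L_1$ fixing $P$, it has a right adjoint $g\colon L_1\to L_2$ because $f$ preserves all joins between complete lattices, and by uniqueness of adjoints it is enough to see that $g$ is a completely meet-preserving order embedding fixing $P$. Identifying $L_i$ with the lattice $L_i'$ of $\Gamma_{e_i}$-closed subsets of $P$ (each of which is a downset), join-density together with the fact that $f$ fixes $P$ forces $f$ to act by $C\mapsto\Gamma_{e_1}(C)$; applying the resulting identity $f(\Gamma_{e_2}(S))=\Gamma_{e_1}(S)$ with $\Gamma_{e_2}(S)$ in place of $S$ and using idempotence of $\Gamma_{e_2}$ yields $\Gamma_{e_1}(S)=\Gamma_{e_1}(\Gamma_{e_2}(S))\supseteq\Gamma_{e_2}(S)$, i.e. $\Gamma_{e_2}\le\Gamma_{e_1}$. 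By Lemma \ref{L:same} every $\Gamma_{e_1}$-closed set is then $\Gamma_{e_2}$-closed, so $L_1'\subseteq L_2'$, and the inclusion $L_1'\hookrightarrow L_2'$ is an order embedding fixing $P$ that preserves arbitrary intersections (hence all meets, including the empty one, which in both lattices is the top element $P$); a one-line check shows it is right adjoint to $C\mapsto\Gamma_{e_1}(C)$, so transporting back through the canonical isomorphisms $L_i\cong L_i'$ identifies it with $g$. The same closure-operator bookkeeping shows $g\mapsto f$ and $f\mapsto g$ are mutually inverse, so $\Phi$ is an isomorphism of categories and $\JCop=\JC^{\textup{op}}$.

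The main obstacle is exactly this last passage: showing that the right adjoint of an arbitrary completely join-preserving map fixing $P$ is again an order embedding fixing $P$, not merely some completely meet-preserving map. Lemma \ref{L:induced} supplies the easy direction but not this, and the translation into closure operators via Propositions \ref{P:closure} and \ref{P:arrow} is what makes it go through; the remaining ingredients (objects, thinness, functoriality) are routine.
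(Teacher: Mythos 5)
Your proposal is correct and takes essentially the same route as the paper, whose own proof is just a terse appeal to thinness, the adjoint-pair correspondence of arrows, lemmas \ref{L:same} and \ref{L:induced}, and uniqueness of adjoints. You simply flesh out that sketch, in particular filling in (via the closure-operator translation of propositions \ref{P:closure} and \ref{P:arrow}) the point the paper leaves implicit, namely that the right adjoint of a completely join-preserving map fixing $P$ is again an order embedding fixing $P$.
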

\begin{proof}
Both categories are thin, and their maps come in adjoint pairs. This follows from lemmas \ref{L:contained}, \ref{L:same} and \ref{L:induced}, and the uniqueness of adjoints.
\end{proof}

\begin{cor}\label{C:op}
$\Frm_{\overleftarrow{P}}=\Frm_P^{\textup{op}}$.
\end{cor}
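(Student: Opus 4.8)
The plan is to reduce the corollary to proposition \ref{P:op}. That proposition already identifies $\JCop$ with $\JC^{\textup{op}}$, so it suffices to exhibit $\Frm_P$ and $\Frm_{\overleftarrow{P}}$ as the full subcategories of $\JC$ and $\JCop$ respectively determined by the \emph{same} class of objects, namely the join-completions $e\colon P\to L$ with $L$ a frame; since forming the opposite category commutes with passing to a full subcategory on a fixed object class, the identity $\Frm_{\overleftarrow{P}}=\Frm_P^{\textup{op}}$ then follows formally.

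First I would record that $\Frm_{\overleftarrow{P}}$ is full in $\JCop$: by definition its arrows are order embeddings fixing $P$, and every such map is completely meet-preserving by lemma \ref{L:induced}(2) (being the right adjoint of a completely join-preserving map), hence is an arrow of $\JCop$, while the converse inclusion of arrows is immediate from the definition of $\JCop$. The less routine point is that $\Frm_P$ is full in $\JC$, i.e.\ that between two frame join-completions of $P$ a completely join-preserving map fixing $P$ is automatically a frame morphism (the converse being trivial). For this I would take $f\colon L_1\to L_2$ completely join-preserving with $f\circ e_1=e_2$, identify each $L_i$ with its lattice of $\Gamma_i$-closed subsets of $P$ via proposition \ref{P:closure}, and first check $\Gamma_1\le\Gamma_2$ using lemma \ref{L:same}: if $C$ is $\Gamma_2$-closed and $p\in\Gamma_1(C)$ then $e_1(p)\le\bv e_1[C]$, so $e_2(p)=f(e_1(p))\le f(\bv e_1[C])=\bv e_2[C]$, whence $p\in C$; thus every $\Gamma_2$-closed set is $\Gamma_1$-closed. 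Then proposition \ref{P:arrow}, applied with $\Gamma'=\Gamma_1$ and $\Gamma=\Gamma_2$, gives that, since $L_2$ is a frame, the unique completely join-preserving map $L_1\to L_2$ fixing $P$ preserves finite meets; by uniqueness that map is $f$, so $f$ preserves finite meets as well as all joins and is therefore a frame morphism.

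With both fullness statements established, proposition \ref{P:op} yields $\Frm_{\overleftarrow{P}} = (\JCop)' = (\JC^{\textup{op}})' = (\JC')^{\textup{op}} = \Frm_P^{\textup{op}}$, where $(-)'$ denotes the full subcategory on the object class of frames. I expect the only genuine obstacle to be the bookkeeping behind the claim that $\Frm_P$ is a full subcategory of $\JC$; once that identification of arrows is in hand the rest is purely formal, and no computation beyond the short closure-operator argument above is required.
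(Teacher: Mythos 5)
Your proposal is correct and follows the paper's own route: the paper likewise deduces the corollary immediately from proposition \ref{P:op} together with the observation that $\Frm_{\overleftarrow{P}}$ and $\Frm_P$ are full subcategories of $\JCop$ and $\JC$ on the same objects. The only difference is that you spell out the fullness of $\Frm_P$ in $\JC$ (via lemma \ref{L:same} and proposition \ref{P:arrow}), which the paper asserts without proof; your verification of that point is sound.
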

\begin{proof}
This is immediate from proposition \ref{P:op} as $\Frm_{\overleftarrow{P}}$ and $\Frm_P$ are full subcategories of $\JCop$ and $\JC$ respectively.
\end{proof}

\section{The global picture}\label{S:global}

In the preceding sections we took a `local' view, in the sense that we fixed a poset $P$ and looked at categories of frames and join-specifications arising from it. It's natural to consider also the `global' perspective, by examining the category of all posets and suitable maps, and considering a functor from this category into $\Frm$, the category of frames and frame morphisms. The immediate obstacle here is that this doesn't make sense as it stands, as we have been thinking about join-specifications for a given poset, but not about schemes associating each poset $P$ with a join-specification $\cU_P$. Once we start thinking along these lines, another immediate problem is that if we allow $\cU_P$ to be chosen arbitrarily, the $\cU$-morphisms we want to be the maps in the category will not typically compose properly.

Fortunately, there is a substantial amount of literature addressing precisely this situation. In particular,  \cite{Ern86} provides a general framework in which questions such as these can be thoroughly answered. For reasons of brevity we will not reproduce the details here, but for convenience we will sketch out how the general theory applies in this case.

A scheme associating each poset $P$ with a join-specification $\cU_P$ is a special case of what Ern\'e calls a \emph{subset selection}, which is just a rule $\mathcal{Z}$ associating with each poset $P$ a set of its subsets $\mathcal{Z}P$. \cite{FriSch16} uses the term \emph{selection rule} for a similar purpose to us, with a similar use of ideals closed under specified joins, though in the context of partial frames.

From now on we will use $\overline{\cU}$ to refer to join-specification schemes, and, given a poset $P$, we use $\cU_P$ to refer to the join-specification for $P$ assigned by $\overline{\cU}$.

\begin{defn}[$\PosU$, $\PosIU$]
Given a join-specification scheme $\overline{\cU}$, define $\PosU$ to be the class of all posets, along with the class of all $\cU_P$-morphisms taken across all posets $P$. We define $\PosIU$ similarly, except that we restrict to the injective $\cU_P$-morphisms.
\end{defn}

We are interested in schemes $\overline{\cU}$ such that $\PosU$ and $\PosIU$ are categories, which happens if and only if the $\cU_P$-morphisms all compose appropriately. We will call such join-specification schemes \emph{categorical}. Note that Ern\'e uses the term \emph{compositive} for a similar purpose. We use a different word to reinforce that we are specifically talking about the $\cU_P$-morphisms, and not the most obvious class of maps that would be associated with a subset selection in Ern\'e's system.

What we call a $\cU_P$-morphism is what \cite{Ern86} would call a \emph{$\mathcal{Z}P$-join preserving map}. It follows from \cite[propositions 1.4 and 1.8]{Ern86} that a join-specification scheme $\overline{\cU}$ will be categorical if
 and only if all the maps in the category are \emph{continuous}, where a map $f:P\to Q$ is continuous if whenever
 $C\subseteq Q$ is a $\cU_Q$-ideal, $f^{-1}[C]$ is a $\cU_P$-ideal. Note that this use of continuity, though
 deliberately suggestive of topology, is not equivalent, as the set of $\cU_P$-ideals of some poset $P$ need not be
 closed under finite unions, even if this $\cU_P$ is assigned by a categorical scheme.

The term \emph{subset system} has also appeared in the literature \cite{BanNel82,WWT78,Mes83,Ern86}, meaning a subset selection $\mathcal{Z}$ with the strong property that whenever $f:P\to Q$ is monotone and $X\in \mathcal{Z}P$, we must
 have $f[X]\in \mathcal{Z}Q$. By \cite[corollary 1.9]{Ern86}, if a join-specification scheme $\overline{\cU}$ is also a subset system, then it will be categorical.

Given a join-specification scheme $\overline{\cU}$, and a $\cU_P$-morphism $f:P\to Q$, we can define a map $f^+: \cI_{\cU_P}\to \cI_{\cU_Q}$ by
\[f^+(C) = \Gamma_{\cU_Q}(f[C]).\]
When $\overline{\cU}$ is categorical, this map preserves arbitrary joins. For a proof of this claim see e.g. \cite[section
 2]{Ern86}, or just argue directly from the definitions using the fact that if $\overline{\cU}$ is categorical then $f$ is continuous, in the sense defined above. Thus, if a join-specification scheme $\overline{\cU}$ is categorical, then it also defines a functor $F_\cU$ from
 $\PosU$ to the category $\LatR$ of complete lattices and completely join-preserving (i.e. residuated) maps. $F_\cU$ takes a poset $P$ to $\cI_{\cU_P}$,
 and a map $f:P\to Q$ to $f^+$. This is essentially a special case of \cite[proposition 2.1]{Ern86}. In the case where $\cU_P$ is a frame for every $P$, note the following lemma and its corollary.

\begin{lemma}\label{L:lift}
Let $\overline{\cU}$ be a categorical join-specification scheme, and let $f:P\to Q$ be an injective $\cU_P$-morphism. Suppose $\cI_{\cU_P}$ and $\cI_{\cU_Q}$ are both frames. Then $f^+:\cI_{\cU_P}\to \cI_{\cU_Q}$ is a frame morphism.
\end{lemma}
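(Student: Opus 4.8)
The plan is to build on the remark made just before the lemma: when $\overline{\cU}$ is categorical, $f^{+}$ is completely join-preserving. Since a frame morphism is exactly a completely join-preserving lattice homomorphism, and since complete join-preservation already forces preservation of the bottom element (the empty join), the only thing that remains to be checked is that $f^{+}$ preserves binary meets. (Preservation of the top is not required by the notion of lattice homomorphism in play here, which is just as well, since in general $f^{+}(P)=\Gamma_{\cU_Q}(f[P])$ need not be all of $Q$.)

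So the task is to show, for $\cU_P$-ideals $C_{1},C_{2}$, that $f^{+}(C_{1}\cap C_{2})=f^{+}(C_{1})\cap f^{+}(C_{2})$; unwinding the definition of $f^{+}$, this reads $\Gamma_{\cU_Q}(f[C_{1}\cap C_{2}])=\Gamma_{\cU_Q}(f[C_{1}])\cap\Gamma_{\cU_Q}(f[C_{2}])$. The inclusion from left to right is immediate from monotonicity of $\Gamma_{\cU_Q}$. For the reverse inclusion I would combine two observations. First, injectivity of $f$ yields $f[C_{1}\cap C_{2}]=f[C_{1}]\cap f[C_{2}]$ --- this is precisely where the hypothesis that $f$ is injective enters. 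Secondly, since $\cI_{\cU_Q}$ is assumed to be a frame, corollary \ref{C:arrow} applies to the finite family $\{f[C_{1}],f[C_{2}]\}$ and gives $\Gamma_{\cU_Q}(f[C_{1}])\cap\Gamma_{\cU_Q}(f[C_{2}])=\Gamma_{\cU_Q}((f[C_{1}])^{\downarrow}\cap(f[C_{2}])^{\downarrow})$. Putting these together, the whole lemma comes down to the single inclusion $\Gamma_{\cU_Q}((f[C_{1}])^{\downarrow}\cap(f[C_{2}])^{\downarrow})\subseteq\Gamma_{\cU_Q}(f[C_{1}]\cap f[C_{2}])$.

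The step I expect to require the most care is this last inclusion. The difficulty is that, even though each $C_{i}$ is a downset of $P$, its image $f[C_{i}]$ need not be a downset of $Q$, so $(f[C_{i}])^{\downarrow}$ may be strictly larger than $f[C_{i}]$, and an element of $Q$ that lies below some element of $f[C_{1}]$ and below some element of $f[C_{2}]$ need not lie below any element of $f[C_{1}\cap C_{2}]$. Closing this gap is where the categoricality of $\overline{\cU}$ has to be used --- that is, the continuity of $f$ (preimages of $\cU_Q$-ideals are $\cU_P$-ideals), guaranteed by \cite[propositions 1.4 and 1.8]{Ern86} --- rather than merely the fact that $\cI_{\cU_Q}$ is a frame: writing $D=\Gamma_{\cU_Q}(f[C_{1}\cap C_{2}])$, continuity makes $f^{-1}[D]$ a $\cU_P$-ideal containing $C_{1}\cap C_{2}$, and one would want to exploit this, together with the interaction between $f$, $f^{-1}$ and the two closure operators set up in section \ref{S:ext}, to rule out any element of $(f[C_{1}])^{\downarrow}\cap(f[C_{2}])^{\downarrow}$ escaping $D$. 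Once that inclusion is secured, the rest --- assembling the binary-meet identity and concluding that $f^{+}$ is a frame morphism --- is routine bookkeeping.
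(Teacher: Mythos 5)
Your opening moves coincide with the paper's own argument: complete join-preservation comes from categoricality, injectivity gives $f[C_1\cap C_2]=f[C_1]\cap f[C_2]$, and corollary \ref{C:arrow} relates the intersection of closures to the closure of an intersection of down-closures. Where you differ is that you notice, correctly, that corollary \ref{C:arrow} only yields $\Gamma_{\cU_Q}(f[C_1])\cap\Gamma_{\cU_Q}(f[C_2])=\Gamma_{\cU_Q}\bigl((f[C_1])^\downarrow\cap(f[C_2])^\downarrow\bigr)$, so that everything hinges on the further inclusion $\Gamma_{\cU_Q}\bigl((f[C_1])^\downarrow\cap(f[C_2])^\downarrow\bigr)\subseteq\Gamma_{\cU_Q}(f[C_1\cap C_2])$; the paper's proof simply applies corollary \ref{C:arrow} to $f[C_1],f[C_2]$ without the down-closures and passes over this point silently. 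However, you never prove that inclusion: you only express the hope that continuity of $f$ (preimages of $\cU_Q$-ideals are $\cU_P$-ideals) will supply it. That is a genuine gap, and the proposed repair cannot work, because continuity only gives information about points of $Q$ lying in the image of $f$, whereas the problematic elements of $(f[C_1])^\downarrow\cap(f[C_2])^\downarrow$ are points strictly below the images of $C_1$ and $C_2$ and outside $f[P]$; knowing that $f^{-1}[D]$ is a $\cU_P$-ideal containing $C_1\cap C_2$ says nothing about them.

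In fact the inclusion you isolate can fail under the stated hypotheses, so no bookkeeping will finish the argument without strengthening them. Take the scheme of example \ref{E:emNec} (each poset gets all subsets whose joins exist; this is categorical). Let $P=\{a,b\}$ be a two-element antichain and let $Q=\{q_1,q_2,x,y\}$ with $q_1,q_2<x$, $q_1,q_2<y$ and no other strict comparabilities, and define $f(a)=x$, $f(b)=y$. Then $f$ is an injective (indeed order-embedding) $\cU_P$-morphism; every member of $\cU_Q$ contains its maximum, so $\cI_{\cU_P}$ and $\cI_{\cU_Q}$ are the full downset lattices and hence frames. Yet, with $C_1=\{a\}$ and $C_2=\{b\}$, one has $\Gamma_{\cU_Q}(f[C_1\cap C_2])=\Gamma_{\cU_Q}(\emptyset)=\emptyset$ while $(f[C_1])^\downarrow\cap(f[C_2])^\downarrow=\{q_1,q_2\}$, so $f^+(C_1\cap C_2)=\emptyset\neq\{q_1,q_2\}=f^+(C_1)\cap f^+(C_2)$. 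So the difficulty you flagged is not an artefact of your route: binary-meet preservation genuinely needs more than injectivity of $f$ plus the two frame hypotheses (for instance, a condition guaranteeing $(f[C_1])^\downarrow\cap(f[C_2])^\downarrow\subseteq(f[C_1\cap C_2])^\downarrow$), and the same caveat applies to the step in the paper's proof where corollary \ref{C:arrow} is invoked without the down-closures.
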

\begin{proof}
Since, as explained above, it is always the case that $f^+$ preserves arbitrary joins, we need only show it preserves binary meets. So let $C_1,C_2\in \cI_{\cU_P}$. Then, using corollary \ref{C:arrow} we have
\begin{align*}f^+(C_1\wedge C_2) &= f^+(C_1\cap C_2) \\
&= \Gamma_{\cU_Q}(f[C_1\cap C_2)])\\
&= \Gamma_{\cU_Q}(f[C_1]\cap f[C_2])\\
&= \Gamma_{\cU_Q}(f[C_1])\cap \Gamma_{\cU_Q}(f[(C_2)])\\
&= f^+(C_1)\wedge f^+(C_2).
\end{align*}
This shows that $f^+$ preserves binary meets as required.
\end{proof}

Lemma \ref{L:lift} is a kind generalization of corollary \ref{C:arrow}, as we can let $f$ be an order isomorphism.  We thus have:

\begin{cor}\label{C:func}
Let $\overline{\cU}$ be a categorical join-specification scheme such that $\cI_{\cU_P}$ is a frame for every poset $P$. Then $F_\cU$ restricts to a functor $F^i_\cU$ from $\PosIU$ to $\Frm$.
\end{cor}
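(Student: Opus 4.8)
The plan is to assemble the statement from the machinery already in place, since almost all of the work has been done. We are given that $\overline{\cU}$ is categorical, so by the discussion preceding the corollary, $F_\cU$ is a functor from $\PosU$ to $\LatR$ sending $P$ to $\cI_{\cU_P}$ and an arrow $f:P\to Q$ to $f^+:\cI_{\cU_P}\to\cI_{\cU_Q}$, where $f^+(C)=\Gamma_{\cU_Q}(f[C])$. We restrict the domain to $\PosIU$, the (sub)category with the same objects but only the injective $\cU_P$-morphisms. First I would note that $\PosIU$ really is a subcategory of $\PosU$: identity maps are injective $\cU_P$-morphisms, and a composite of injective $\cU_P$-morphisms is again an injective $\cU_P$-morphism (composability of $\cU_P$-morphisms is exactly the content of $\overline{\cU}$ being categorical, and injectivity is obviously preserved under composition).

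Next I would check that $F_\cU$, restricted along the inclusion $\PosIU\hookrightarrow\PosU$, actually lands in $\Frm$. On objects this is immediate from the hypothesis that $\cI_{\cU_P}$ is a frame for every poset $P$. On arrows, let $f:P\to Q$ be an injective $\cU_P$-morphism; then $\cI_{\cU_P}$ and $\cI_{\cU_Q}$ are frames by hypothesis, so Lemma \ref{L:lift} applies and tells us that $f^+$ is a frame morphism, i.e. a morphism of $\Frm$. Thus every object and every arrow in the image of the restricted functor lies in $\Frm$.

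Finally, functoriality of the restriction is inherited: $F_\cU$ preserves identities and composition as a functor into $\LatR$, and frame morphisms compose as maps of complete lattices, so the equations $F_\cU(\mathrm{id}_P)=\mathrm{id}_{\cI_{\cU_P}}$ and $F_\cU(g\circ f)=g^+\circ f^+$ continue to hold when all the arrows are viewed in $\Frm$. Defining $F^i_\cU$ to be this restriction completes the argument. There is no real obstacle here; the only point requiring care is the appeal to Lemma \ref{L:lift} to see that $f^+$ is a frame morphism and not merely completely join-preserving, and that relies essentially on corollary \ref{C:arrow} (preservation of finite meets), which is why the hypothesis that \emph{every} $\cI_{\cU_P}$ is a frame — and not just, say, the domain — is needed.
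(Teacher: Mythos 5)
Your proposal is correct and follows essentially the same route as the paper, whose proof simply invokes Lemma \ref{L:lift}; you have just made explicit the routine bookkeeping (objects land in $\Frm$ by hypothesis, arrows are frame morphisms by the lemma, and identities and composites are inherited from $F_\cU$).
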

\begin{proof}
This follows directly from lemma \ref{L:lift}.
\end{proof}

We could, of course, refine corollary \ref{C:func} by relaxing the requirement that $\cI_{\cU_P}$ is a frame for every poset $P$, and instead restricting the domain of $F^i_\cU$ appropriately.

Example \ref{E:emNec} below shows that the restriction to injective maps is necessary if we want the lift $f^+$ to preserve binary meets in general. Example \ref{E:notInj} shows that the lift $f^+$ may not be 1-1, even if $f$ is an order embedding.

\begin{ex}\label{E:emNec}
Let $P$ be the two element antichain $\{a,b\}$, and let $Q$ be the two element chain with carrier $\{c,d\}$ and such
 that $c\leq d$. Define $\overline{\cU}$ to be the scheme that assigns to a poset the set of all its subsets whose join is
 defined, so $\cU_P =\{\{a\},\{b\}\}$, and $\cU_Q=\{\emptyset, \{c\},\{d\}, \{c,d\}\}$. Then the $\overline{\cU}$-morphisms are the
 completely join-preserving monotone maps, and so $\overline{\cU}$ is categorical. Define $f:P\to Q$ by $f(a)=f(b)=d$. It's easy to see that $f$ is a $\overline{\cU}$-morphism.

Consulting the table in figure \ref{F:empty}, we see that $\cI_{\cU_P} = \{\emptyset, \{a\}, \{b\}, \{a,b\}\}$, and
 $\cI_{\cU_Q}=\{\{c\}, \{c,d\}\}$ (both considered as lattices ordered by inclusion). Both $\cI_{\cU_P}$ and
 $\cI_{\cU_Q}$ are obviously frames. Consider the lift $f^+:\cI_{\cU_P}\to \cI_{\cU_Q}$. We have
\[f^+(\{a\}\cap\{b\}) = f^+(\emptyset) = \Upsilon_{\cU_Q}(\emptyset)=\{c\}\neq \{c,d\} = f^+(\{a\})\cap f^+(\{b\}),\]
and thus $f^+$ does not preserve binary meets.
\end{ex}

\begin{ex}\label{E:notInj}
Let $\overline{\cU}$ again be the scheme that assigns to a poset the set of all its subsets whose join is defined. Let $P$ be the
 antichain $\{a,b,c\}$, and let $Q$ be the poset with carrier $\{a',b',c',\top\}$, where $\{a',b',c'\}$ is an antichain
 and $\top$ is the top element. Then  $\cI_{\cU_P} = \wp(P)$, and $\cI_{\cU_Q}=\{\emptyset, \{a'\}, \{b'\}, \{c'\},
 \{a',b',c',\top\} \}$. Define $f:P\to Q$ by $f(x)=x'$ for $x\in \{a,b,c\}$. Then $f$ is a $\cU_P$-embedding, but
 $f^+(\{a,b\})= \{a',b',c',\top\} = f^+(\{b,c\})$, so $f^+$ is not 1-1.
\end{ex}

If $\overline{\cU}$ is a categorical join-specification scheme than it follows from \cite[theorem 2]{Schm74} that the inclusion of the category of $\LatR$ into $\PosU$ has the functor $F_\cU$ as a left adjoint (see also \cite[proposition 2.3]{Ern86} for a generalization, and \cite[theorem 6.2]{ErnZha01} for a related categorical equivalence).

Motivated by corollary \ref{C:func}, and inspired by the adage that `adjoint functors arise everywhere', a natural question is, for which choices of $\overline{\cU}$, and for which subcategories of $\PosU$ and $\Frm$, is the restriction of $F_\cU$ a left adjoint to the corresponding restriction of the inclusion functor from $\Frm$ back the other way?

We will not address this question in any depth, but we note that examples \ref{E:emNec} and \ref{E:notInj} reveal a
 potential obstacle. Example \ref{E:emNec} shows that if we don't restrict to at least $\PosIU$ then the appropriate restriction of $F_\cU$ may not be functorial. At the same time, example \ref{E:notInj} demonstrates that even lifts of
 embeddings may not be 1-1, so the `inclusion functor' is not always even defined, even if we do restrict to a subcategory of $\PosIU$.

This is not quite the end of the story however, as $\cI_{\cU_Q}$ from example \ref{E:notInj} is not a frame, so the poset
 $Q$ would have be excluded from our hypothetical subcategory of $\PosU$ anyway. This turns out to not be an accident, and we can end with a modest positive result.

\begin{lemma}\label{L:embed}
Let $\overline{\cU}$ be the join-specification scheme that assigns to every poset the set of all its subsets whose
 joins are defined. Let $f:P\to Q$ be an order embedding, and suppose that $\cU_P$ and $\cU_Q$ are both frame
 generating. Then the lift $f^+$ is an order embedding.
\end{lemma}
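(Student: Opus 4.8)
The plan is to show that $f^+$ is injective by exhibiting a left inverse. Since $\overline{\cU}$ here assigns to each poset all subsets with defined join, a $\cU_P$-ideal is precisely a downset closed under all existing joins, and $f^+(C) = \Gamma_{\cU_Q}(f[C])$. First I would observe that because $f:P\to Q$ is an order embedding, the preimage map $C' \mapsto f^{-1}[C']$ sends $\cU_Q$-ideals of $Q$ to $\cU_P$-ideals of $P$: it clearly preserves downward closure, and if $S\subseteq f^{-1}[C']$ has a join $\bv S$ in $P$, then since $f$ is an order embedding and $f^+$-type maps behave well, one checks that $f(\bv S)$ is the join of $f[S]$ in $f[P]$ — but here the subtlety is that $f$ need only preserve joins that exist in $P$ as far as the image subposet is concerned. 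Actually, the cleanest route: $f^+$ is the restriction to $\cI_{\cU_P}$ of the left adjoint (residuation) of the monotone map $g: \cI_{\cU_Q}\to\cI_{\cU_P}$, $g(C') = f^{-1}[C']$. So I would first verify $g$ is well-defined (lands in $\cU_P$-ideals) and monotone, then identify $f^+$ as its lower adjoint, and finally show $g\circ f^+ = \mathrm{id}$ on $\cI_{\cU_P}$, which gives injectivity of $f^+$.

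Concretely, the key steps are: (1) Check $g(C') = f^{-1}[C']$ is a $\cU_P$-ideal whenever $C'$ is a $\cU_Q$-ideal — downward closure is immediate from monotonicity of $f$; for join-closure, if $S\subseteq f^{-1}[C']$ and $\bv S$ exists in $P$, use Proposition~\ref{P:ext} applied to a suitable join-extension, or more directly argue that since $\cU_Q$ is frame-generating and $f[S]\subseteq C'$ with $f(\bv S)\geq f(s)$ for all $s$, we get $f(\bv S)\in\Gamma_{\cU_Q}(f[S])\subseteq C'$, using that $f$ being an order embedding forces $f(\bv S) = \bv f[S]$ \emph{relative to} the fact that any upper bound of $f[S]$ in $Q$ of the form $f(p)$ must have $p\geq \bv S$. (2) Show $f^+ \dashv g$, i.e. $f^+(C)\subseteq C' \iff C\subseteq f^{-1}[C']$ for $C\in\cI_{\cU_P}$, $C'\in\cI_{\cU_Q}$: the right-to-left direction is because $f^+(C) = \Gamma_{\cU_Q}(f[C])$ and $f[C]\subseteq C'$ together with $C'$ being $\Gamma_{\cU_Q}$-closed; left-to-right is because $C\subseteq f^{-1}[f[C]]\subseteq f^{-1}[f^+(C)]$. (3) Conclude $g\circ f^+ = \mathrm{id}_{\cI_{\cU_P}}$: for $C\in\cI_{\cU_P}$ we have $C\subseteq f^{-1}[f^+(C)]$ automatically, and for the reverse inclusion, if $p\in f^{-1}[f^+(C)]$ then $f(p)\in\Gamma_{\cU_Q}(f[C])$; since $\cU_Q$ is frame-generating and $f$ is an order embedding with $C$ downward closed, I would use Theorem~\ref{T:gen}(\ref{i}) (that $\Gamma_{\cU_Q} = \Upsilon_{\cU_Q}$) to write $f(p) = \bv T$ for some $T\in\cU_Q^+$ with $T\subseteq f[C]^\downarrow$, then pull back through $f$ using that $f$ reflects order and preserves the relevant join.

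The main obstacle I expect is step (3), specifically showing $f^{-1}[f^+(C)]\subseteq C$ — equivalently, that $f$ does not create "spurious" new elements below $f^+(C)$ that weren't already accounted for in $C$. The point is that $f(p)\in\Gamma_{\cU_Q}(f[C])$ should force $p$ to be a join (in $P$) of elements of $C$, hence $p\in C$ since $C$ is a $\cU_P$-ideal; but making this rigorous requires carefully using that $\cU_Q$ is frame-generating so that $\Gamma_{\cU_Q}(f[C]) = \Upsilon_{\cU_Q}(f[C]) = \{\bv T : T\in\cU_Q^+,\ T\subseteq f[C]^\downarrow\}$, and then showing that if $f(p) = \bv T$ with $T\subseteq f[C]^\downarrow$, the set $f^{-1}[T^\downarrow]\cap\pd$ has join $p$ in $P$ and lies in $C$ — this is where the order-embedding hypothesis on $f$ does the real work, since it lets us transport the join computation from $Q$ back to $P$. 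Once step (3) is in place, injectivity of $f^+$ is immediate: if $f^+(C_1) = f^+(C_2)$ then $C_1 = g(f^+(C_1)) = g(f^+(C_2)) = C_2$. Combined with the fact (already noted before this lemma) that $f^+$ is monotone, and that a monotone injection with monotone left inverse between posets is an order embedding, we conclude $f^+$ is an order embedding.
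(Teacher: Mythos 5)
Your strategy hinges on the map $g(C')=f^{-1}[C']$ being a well-defined left inverse of $f^+$, and this is where the proposal breaks down. For a mere order embedding, preimages of $\cU_Q$-ideals need not be $\cU_P$-ideals: this is precisely the ``continuity'' condition discussed in section \ref{S:global}, which is strictly stronger than being an embedding. Your justification rests on the claim that ``$f$ being an order embedding forces $f(\bv S)=\bv f[S]$'', but an embedding only guarantees that $f(\bv S)$ is least among upper bounds of $f[S]$ that lie in $f[P]$; the poset $Q$ may contain a strictly smaller upper bound. Concretely, let $P=\{a,b,\top\}$ with $a,b<\top$, let $Q=\{a,b,m,\top\}$ with $a,b<m<\top$, let $f$ be the inclusion, and let $\overline{\cU}$ be as in the lemma. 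Both $\cU_P$ and $\cU_Q$ are frame-generating (the ideal lattices are small distributive lattices), yet $f^{-1}[m^\downarrow]=\{a,b\}$ is not a $\cU_P$-ideal, since $a\vee b=\top$ in $P$; equally, $f(a\vee_P b)=\top\neq m=\bv_Q f[\{a,b\}]$, refuting the key claim of your step (1). (In this example $f^+$ is still an order embedding, so this defeats your method rather than the statement.) Hence $g$ is not well defined, and steps (2), (3) and the injectivity conclusion collapse with it. Your step (3) also has an independent problem: from $f(p)=\bv T$ with $T\in\cU_Q^+$ and $T\subseteq f[C]^\downarrow$ you cannot conclude that $f^{-1}[T^\downarrow]\cap p^\downarrow$ has join $p$ in $P$, since $T$ lives in the downset of $f[C]$ computed in $Q$ and need not meet $f[P]$ at all; no argument is offered for this transfer, and it is exactly the delicate point of the whole lemma.

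By contrast, the paper does not construct a left inverse (indeed none exists in general, as the example above shows). It proves order-reflection directly: assuming $f^+(C)\subseteq f^+(D)$, it uses the frame-generation of $\cU_Q$ to replace $\Gamma_{\cU_Q}$ by the one-step operator $\Upsilon_{\cU_Q}$ (theorem \ref{T:gen}), so each $f(c)$ with $c\in C$ is exhibited as a join of elements below $f[D]$, and this join is then pulled back along the order embedding to show $c$ is a join of a subset of $D$, whence $c\in D$ because $D$ is closed under all existing joins of $P$ (here it matters that $\overline{\cU}$ is the all-existing-joins scheme). Monotonicity of $f^+$ then finishes the proof. Your sketch gestures at this in step (3), but the passage from witnesses in $f[D]^\downarrow$ back to joins in $P$ is the substantive content, and the adjoint/left-inverse framework you propose does not supply it.
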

\begin{proof}
Let $C,D\in\cI_{\cU_P}$ and suppose $f^+(C)\subseteq f^+(D)$. Then, by definition of $f^+$, we have $\Upsilon_{\cU_P}(f[C]) \subseteq \Upsilon_{\cU_P}(f[D])$. So, in particular, $f[C]\subseteq \Upsilon_{\cU_P}(f[D])$. Since $\cU_Q$ is frame-generating, and since $f$ is an order embedding,
 this means that given $f(c)\in f[C]$ there is $T_c\subseteq D$ with $\bv f[T_c] = f(c)$ (appealing once again to theorem \ref{T:gen}(\ref{D})). It follows that $\bv T_c = c$. This is true for all $c\in C$, so $C\subseteq \Upsilon_{\cU_P}(D) = D$. Since $f^+$ is always monotone we are done.
\end{proof}

\begin{cor}
Let $\overline{\cU}$ be as in lemma \ref{L:embed}, let $\Pos^F_\cU$ be the category whose objects are those posets for which $\cU_P$ is frame-generating, and whose
 arrows are the $\cU_P$-embeddings. Let $F$ be the restriction of $F_\cU$ to $\Pos^F_\cU$, and define $\Frm_e$ to be the
 category of frames and frame embeddings. Let $U$ be the inclusion functor of $\Frm_e$ into $\Pos_F^\cU$. Then $F \dashv U$.
\end{cor}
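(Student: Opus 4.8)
The plan is to produce, for every object $P$ of $\Pos^F_\cU$, a universal arrow from $P$ to $U$, with universal object $F(P)=\cI_{\cU_P}$ and universal map the canonical embedding $\eta_P\colon P\to\cI_{\cU_P}$, $p\mapsto\pd$. Given this, $F\dashv U$ follows from the standard characterisation of a left adjoint by universal arrows.

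First I would dispatch the structural preliminaries. Since a frame $L$ is complete, $\cU_L$ is the set of \emph{all} subsets of $L$, and a $\cU_L$-ideal is a downset closed under all joins; any such set $I$ satisfies $I=(\bv I)^\downarrow$, so $\eta_L\colon L\to\cI_{\cU_L}$, $a\mapsto a^\downarrow$, is an order isomorphism. In particular every frame is an object of $\Pos^F_\cU$ and every frame embedding is a $\cU_L$-embedding, so $U$ really is the inclusion of a (non-full) subcategory. Moreover $F$ lands in $\Frm_e$: for a $\cU_P$-embedding $f\colon P\to Q$ between objects of $\Pos^F_\cU$ the lift $f^+$ is a frame morphism by lemma \ref{L:lift} (the scheme $\overline{\cU}$ is categorical, $f$ is an injective $\cU_P$-morphism, and $\cI_{\cU_P},\cI_{\cU_Q}$ are frames) and is an order embedding by lemma \ref{L:embed}. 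Finally $\eta_P$ is a $\cU_P$-embedding by proposition \ref{P:pres}, hence a morphism of $\Pos^F_\cU$.

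For the universal property, fix a frame $L$ and a $\cU_P$-embedding $g\colon P\to U(L)=L$, and set $\bar g=\eta_L^{-1}\circ g^+\colon\cI_{\cU_P}\to L$, where $g^+(C)=\Gamma_{\cU_L}(g[C])$. Exactly as above, lemma \ref{L:lift} makes $g^+$ a frame morphism and lemma \ref{L:embed} makes it an order embedding, so $\bar g$, being $g^+$ composed with an isomorphism, is a frame embedding. A one-line computation gives $\bar g\circ\eta_P=g$: for $p\in P$ one has $g[\pd]\subseteq g(p)^\downarrow$ while $g(p)\in g[\pd]$, so $\Gamma_{\cU_L}(g[\pd])=g(p)^\downarrow$ and hence $\bar g(\pd)=\eta_L^{-1}(g(p)^\downarrow)=g(p)$. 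For uniqueness, any frame embedding $\cI_{\cU_P}\to L$ with this property is in particular a completely join-preserving map fixing $P$, and proposition \ref{P:univ} provides only one such map; so $\bar g$ is the unique frame embedding extending $g$ along $\eta_P$. Thus $\eta_P$ is a universal arrow from $P$ to $U$, and as $P$ was arbitrary we conclude $F\dashv U$.

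The content has already been isolated: lemmas \ref{L:lift} and \ref{L:embed} together say precisely that, between posets with frame-generating $\cU$, the lift of a $\cU$-embedding is a frame embedding, and that is all the work. The only point meriting care is the identification $\cI_{\cU_L}\cong L$ for a frame $L$; this is what licenses treating $U$ as a genuine inclusion and ensures the transpose $\bar g$ has codomain $L$ rather than merely $\cI_{\cU_L}$. Everything else is the routine bookkeeping of universal arrows.
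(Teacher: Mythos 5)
Your proof is correct, and it reaches the adjunction by a slightly different route than the paper. The paper defines the unit $\eta_P\colon p\mapsto\pd$ and the counit $\varepsilon_L\colon C\mapsto\bv C$ and appeals to the unit--counit characterisation (naturality plus triangle identities, left as routine), whereas you verify the universal-arrow characterisation: you construct the transpose $\bar g=\eta_L^{-1}\circ g^+$ explicitly and get its uniqueness from proposition \ref{P:univ}, which the paper's proof never invokes. The underlying ingredients are identical in both arguments --- the canonical embedding $\eta_P$, the identification $\cI_{\cU_L}\cong L$ for a frame $L$ (your $\eta_L^{-1}$ is exactly the paper's counit), and lemmas \ref{L:lift} and \ref{L:embed} to see that $F$ lands in $\Frm_e$ --- so the difference is one of packaging; your version has the virtue of making the ``routine'' part explicit, since uniqueness of the transpose comes for free from \ref{P:univ}. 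One small point you gloss: the universal-arrow theorem produces \emph{a} left adjoint whose action on a morphism $f\colon P\to Q$ is the map induced by universality, so to conclude that the already-given $F$ (restriction of $F_\cU$) is that adjoint you should note that $f^+\circ\eta_P=\eta_Q\circ f$, i.e.\ $f^+(\pd)=f(p)^\downarrow$; this is literally the same one-line closure computation you already carried out for $\bar g\circ\eta_P=g$, so it is an omission of statement rather than of substance (and it is the same naturality check the paper itself declares routine).
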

\begin{proof}
Lemmas \ref{L:lift} and \ref{L:embed} ensure that $F$ is a functor. For $P\in \Pos^F_\cU$ we have $U(F(P)) =
 \cI_{\cU_P}$, and for $L \in \Frm_e$ we have $F(U(L)) \cong L$ via the map taking $C \in \cI_{\cU_L}$ to $\bv C$.
 Define the unit of the adjunction by $\eta_P:p \mapsto \pd$ for $p\in P$, and define the counit by
 $\varepsilon_L: C \mapsto \bv C$. We can appeal to the standard adjunction condition articulated as \cite[corollary 2.2.6]{Lein14}. Checking that $\eta$ and $\varepsilon$ define natural transformations, and that the appropriate triangle identities hold, is routine.
\end{proof}

\bibliographystyle{plain}

\end{document}